\newtheorem{thm}{Theorem}[section]
\newtheorem{defn}[thm]{Definition}
\newtheorem{rem}[thm]{Remark}	
\newtheorem{prop}[thm]{Proposition}
\newtheorem{cor}[thm]{Corollary}
\newtheorem{note}[thm]{Notation}
\newcommand{\dashedtop}{
\begin{tikzpicture}[scale=0.5]
\draw [dashed] (-0.5,0) -- (0.5,0);
\draw [dashed] (0,0) -- (0,-0.5);
\end{tikzpicture}}
\tikzstyle arrowstyle=[scale=1]
\tikzstyle directed=[postaction={decorate,decoration={markings,
    mark=at position .65 with {\arrow[arrowstyle]{stealth}}}}]
\tikzstyle reverse directed=[postaction={decorate,decoration={markings,
    mark=at position .65 with {\arrowreversed[arrowstyle]{stealth};}}}]
\begin{document}


\title{Real algebraic curves  of bidegree (5,5) on the quadric ellipsoid}
\date{}
\author{Matilde Manzaroli}
\maketitle
\begin{abstract}
We complete the topological classification of non-separating (resp. separating) non-singular real algebraic $(M-i)$-curves of bidegree $(5, 5)$ on the quadric ellipsoid. In particular, we show that previously known restrictions form a complete system for this bidegree. Therefore, the main part of the paper concerns the construction of real algebraic curves. Our strategy is to reduce the problem of construction of curves on the quadric ellipsoid to the construction of curves on the second Hirzebruch surface by degenerating the quadric ellipsoid to the quadratic cone.  
Next, we combine  different classical construction methods on toric surfaces, such as dessins d'enfants and Viro's patchworking method. 
\end{abstract}
\tableofcontents
\section{Introduction}
\label{sec: introintro}
A real algebraic variety is a real algebraic compact complex variety $X$ equipped with an anti-holomorphic involution $\sigma: X \rightarrow X$, called real structure on $X$. The real part $\mathbb{R}X$ of $X$ is the set of points fixed by $\sigma$. A subject of interest about real algebraic varieties is the study of the topology of their real part and it is related to the classical Hilbert's $16$-th problem whose first part is about the classification of the oval arrangements of non-singular real algebraic plane curves. In this paper, we are interested in the study of the topology of real algebraic curves in the quadric ellipsoid. We give some definitions and notations in Section \ref{subsec: quadric}, we state the main results in Section \ref{subsec: main_results} and explain the structure of the paper in Section \ref{subsec: structure}.\\
First of all, let us present some general definitions and known results about real algebraic curves. \\
Let $(A,\sigma)$ be a compact non-singular real algebraic curve and let $l$ denote the number of connected components of the set of real points of $A$. The following inequality has been proven by Harnack for real algebraic plane curves and, after, generalized by Klein. 
\begin{prop}[\cite{Harn76}, \cite{Klei22}]
\label{prop: Harnack-Klein}
The number $l$ is bounded by $g+1$, where $g$ is the genus of $A$. 
\end{prop}
\begin{defn}
If $l=g+1$, we say that $A$ is an \textit{$M$-curve} or a \textit{maximal} curve. Otherwise, for $0\leq l \leq g$ , we say that $A$ is an \textit{$(M-i)$-curve} with $i=g+1-l$. 
\end{defn}
\begin{defn}[\cite{Klei22}]
If $A \setminus \mathbb{R}A$ is connected, we say that $A$ is of \textit{type II} or \textit{non-separating}, otherwise of \textit{type I} or \textit{separating}. 
\end{defn}
Looking at the real part of the curve and its position with respect to its complexification gives us information about $l$ and viceversa. For example, we know that if $A$ is maximal, then $A$ is of type I. Or, if $A$ is of type I then $l$ has the parity of $g+1$. 
\begin{defn}[\cite{Rokh72}]
If $A$ is of type I, the two  halves of $A\setminus \mathbb{R}A$ induce two opposite orientations on $\mathbb{R}A$ called \textit{complex orientations} of the curve.
\end{defn}
Variations of the $16$-th Hilbert problem propose to study topological classifications of real algebraic curves up to different equivalence relations (up to homeomorphism, resp. isotopy, resp. rigid isotopy). Let $(X, \sigma)$ be a real algebraic surface and let $A$ and $B$ be real algebraic curves realizing a certain class $\alpha$ in $H_2(X; \mathbb{Z})$.  
\begin{defn}
\label{defn: homeo}
We say that the pairs $(\mathbb{R}X, \mathbb{R}A)$ and $(\mathbb{R}X, \mathbb{R}B)$ are homeomorphic if there exists a homeomorphism $f: \mathbb{R}X \rightarrow \mathbb{R}X$ such that $f(\mathbb{R}A)=\mathbb{R}B$.
\end{defn}
\begin{defn}
\label{defn: isotopy}
We say that the pairs $(\mathbb{R}X, \mathbb{R}A)$ and $(\mathbb{R}X, \mathbb{R}B)$ are isotopic if the pairs $(\mathbb{R}X, \mathbb{R}A)$ and $(\mathbb{R}X, \mathbb{R}B)$ are homeomorphic via an homeomorphism $f$ and there exists a $1$-parameter family of homeomorphisms $\phi_t: \mathbb{R}X \rightarrow \mathbb{R}X$, $t \in [0,1]$, such that $\phi_0=id$ and $\phi_1=f$. 
\end{defn}
\begin{defn}
\label{defn: rigid_isotopy}
Let $A$ and $B$ be non-singular. We say that the pairs $(\mathbb{R}X, \mathbb{R}A)$ and $(\mathbb{R}X, \mathbb{R}B)$ are rigid isotopic if the pairs $(\mathbb{R}X, \mathbb{R}A)$ and $(\mathbb{R}X, \mathbb{R}B)$ are isotopic via an homeomorphism $f$ and a $1$-parameter family of homeomorphisms $\phi_t$ such that $\phi_t(\mathbb{R}A)$ is the set of real points of a nonsingular real algebraic curve of class $\alpha$ in $H_2(X; \mathbb{Z})$, for all $t \in [0,1]$. 
\end{defn}
\subsection{Quadric ellipsoid}
\label{subsec: quadric}
Let $X$ be $\mathbb{C} P^1 \times \mathbb{C} P^1$, equipped with the anti-holomorphic involution 
$$
\begin{array}{cccc}
\sigma: & X & \longrightarrow & X \\
 & \tiny{(x,y)} & \longmapsto & \tiny{(\overline{y},\overline{x})},
\end{array}
$$ 
where $x=[x_0:x_1]$ and $y=[y_0:y_1]$ are in $\mathbb{C}P^1$ and $\overline{x}=[\overline{x_0}:\overline{x_1}]$ and $\overline{y}=[\overline{y_0}:\overline{y_1}]$ are respectively the images of $x$ and $y$ via the standard complex conjugation on $\mathbb{C}P^1$. The real part of $X$ is homeomorphic to $S^2$. It is well known that $X$ is isomorphic to the quadric ellipsoid in $\mathbb{C}P^3$, in the category of real algebraic varieties.  
A non-singular real algebraic curve $A$ on $X$ is defined by a bi-homogeneous polynomial of bidegree $(d,d)$ $$P(x_0,x_1,y_0,y_1)=\sum_{i,j=1}^{d,d} a_{i,j}x^i_1x^{d-i}_0y^j_1y^{d-j}_0$$ where $d$ is a positive integer and the coefficients satisfy $a_{i,j}=\overline{a_{j,i}}$. Let $\mathbb{R}A$ be the set of real points of $A$. 
The connected components of $\mathbb{R}A$ are called \textit{ovals}. We are interested in the classification of the oval arrangements of non-singular real algebraic curves on $X$. Thanks to the adjunction formula, for a non-singular real algebraic curve $A$ of bidegree $(d,d)$ on $X$ we have $g=(d-1)^2$. It follows that the number of ovals of $\mathbb{R}A$ is bounded by $(d-1)^2+1$ (Proposition \ref{prop: Harnack-Klein}).\\
The rigid isotopy classification of non-singular real algebraic curves of bidegree $(d,d)$ is known for $d < 5$ (\cite{GudShu80}). For a more general idea on topological classifications of real curves on quadric surfaces see \cite{Zvon91}, \cite{NikSai05}, \cite{NikSai07},\cite{NikuSait05}, \cite{DegZvo99}, \cite{Niku85}, \cite{Mikh94} and \cite{DegKha00}. Moreover, Mikhalkin in \cite{Mikh94} has given a partial classification, up to homeomorphism, of real algebraic $M$-curves of bidegree $(5,5)$ on the quadric ellipsoid. In this paper, for all $0 \leq l \leq 17$, we give the complete classification, up to homeomorphism, of the topological types realized by the pair $(\mathbb{R}X, \mathbb{R}A)$, where $A$ is a type II or a type I non-singular real algebraic $(M-i)$-curve of bidegree $(5,5)$ on $X$, with $i=g+1-l$ (Theorems \ref{thm: maximal_thm}, \ref{thm: m-1_curves_thm}, \ref{thm: type_I,II_m-2_thm}, \ref{thm: type_I_II_thm}).\\
Before stating the main results of this paper, let us introduce some notation. 
Given a collection $\sqcup_{i=1,..l}B_i$ of $l$ disjoint circles embedded in $S^2$, we encode the topological pair $(S^2, \sqcup_{i=1,..l}B_i)$ as follows. Let $p$ be any point in $S^2 \setminus \sqcup_{i=1,..l}B_i$. Each oval $B_i$ bounds two non-homeomorphic parts in $S^2 \setminus \{p\}$. Let us call the \textit{interior} the part homeomorphic to a disc and the \textit{exterior} the other one. For each pair of ovals, if one is in the interior of the other we speak about an \textit{injective pair}, otherwise of a \textit{non-injective} one. We shall adopt the following notation to encode a given topological pair $(S^2\setminus \{p\},\sqcup_{i=1,..l}B_i)$. An empty union of ovals is denoted by $0$. We say that a union of $l$ ovals realizes $l$ if there are no injective pairs. The symbol $\langle \mathcal{S}  \rangle$ denotes the disjoint union of a collection of ovals realizing $\mathcal{S}$, and an oval forming an injective pair with each oval of the collection. Finally, the disjoint union of any two collections of ovals, realizing in $S^2 \setminus \{p\}$ respectively $\mathcal{S}'$ and $\mathcal{S}''$, is denoted by $\mathcal{S}'\quad \sqcup \quad \mathcal{S}''$ if none of the ovals of one collection forms an injective pair with the ovals of the other one.\\
We say that the pair $(S^2, \sqcup_{i=1,..,l} B_i)$ realizes $\mathcal{S}$ if there exists a point $p\in S^2 \setminus \sqcup_{i=1,..,l}B_i$ such that $(S^2 \setminus \{p\},  \sqcup_{i=1,..,l}B_i)$ realizes $\mathcal{S}$. As example, we have depicted in $a)$ of Fig. \ref{fig: example_real_scheme_vha2} an arrangement of $11$ ovals in $S^2$ projected on a plane from some point $p \in S^2$. The pair $(S^2, \bigsqcup_{i=1,..,8} B_i)$ realizes $1 \quad \sqcup \quad \langle 2\rangle \quad \sqcup \quad \langle 1\rangle \quad \sqcup \quad \langle \langle 3\rangle \rangle$.
\begin{figure}[!h]
\begin{picture}(100,50)
\put(-7,-19){\includegraphics[width=1.00\textwidth]{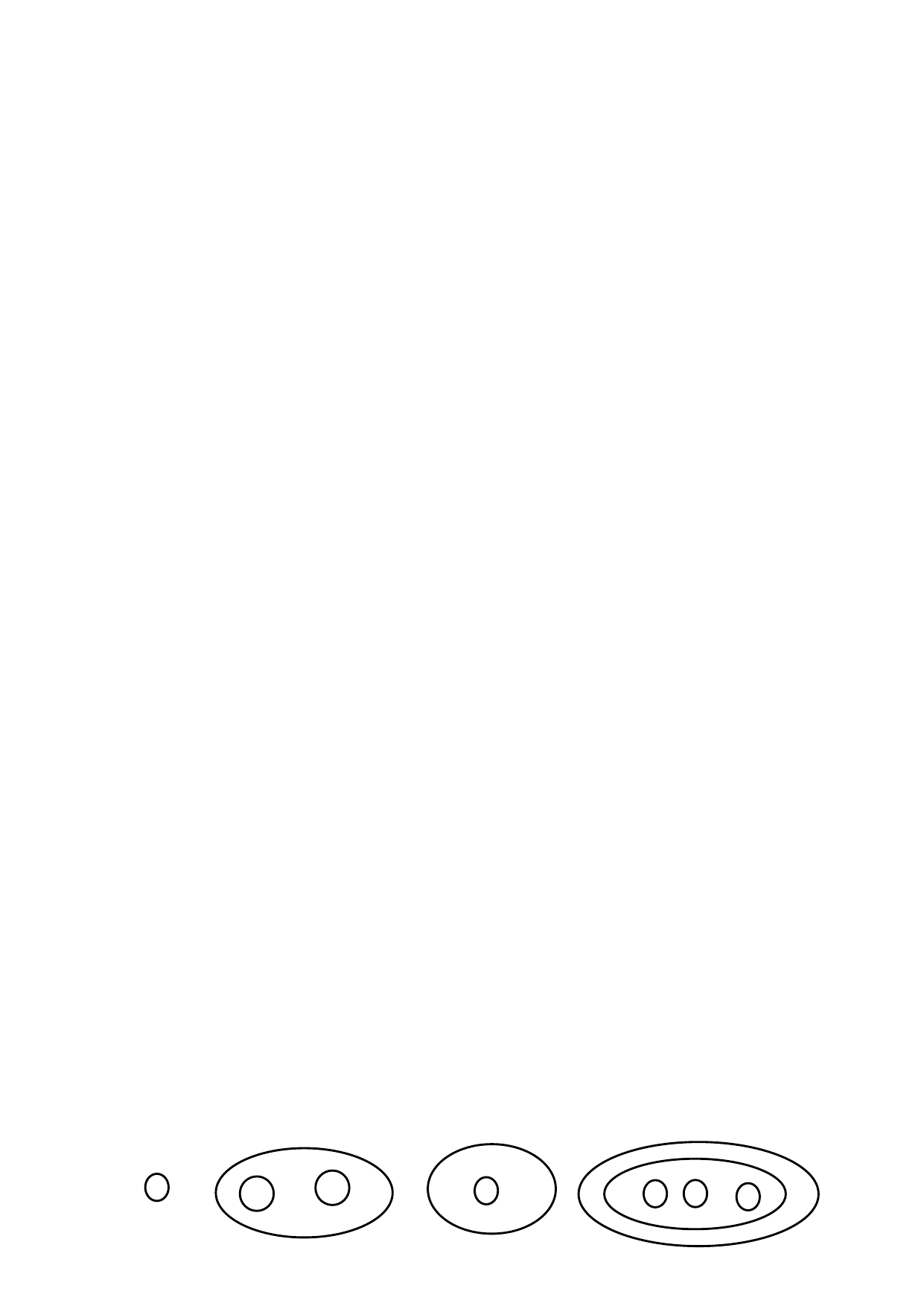}} 
\end{picture}
\caption{Example of an arrangement of embedded circles in $S^2\setminus \{p\}$.}
\label{fig: example_real_scheme_vha2}
\end{figure}
\begin{defn}
\label{defn: real scheme}
The topological type $\mathcal{S}$ realized by a pair $(\mathbb{R}X, \mathcal{B})$, where $\mathcal{B}$ is a collection of disjoint circles embedded in $\mathbb{R}X$, is called real scheme. Let $A$ be a real algebraic curve of bidegree $(d,d)$ on $X$, we say that $A$ has real scheme $\mathcal{S}$ if the pair $(\mathbb{R}X, \mathbb{R}A)$ realizes $\mathcal{S}$.
\end{defn}
\subsection{Main results}
\label{subsec: main_results}
We show that previous known results on the topological obstructions for bidegree $(d,d)$ real algebraic curves of type II or of type I form a complete system of restrictions for bidegree $(5,5)$. Indeed, all real schemes listed in Theorems \ref{thm: maximal_thm}, \ref{thm: m-1_curves_thm}, \ref{thm: type_I,II_m-2_thm}, \ref{thm: type_I_II_thm} are those non-prohibited by Propositions \ref{prop: Harnack-Klein}, \ref{prop: bezout}, \ref{prop: orientation_formula} and Theorem \ref{thm: mikha_congruences}; moreover, such real schemes are all realizable by type II or type I real algebraic $(M-i)$-curves of bidegree $(5,5)$ in $X$. The main results of this paper are Theorems \ref{thm: maximal_thm}, \ref{thm: m-1_curves_thm}, \ref{thm: type_I,II_m-2_thm} and \ref{thm: type_I_II_thm}.
\begin{thm}[$M$-curves]
\label{thm: maximal_thm}
Let $A$ be a non-singular real algebraic $M$-curve of bidegree $(5,5)$ on $X$. Then the pair $(\mathbb{R}X, \mathbb{R}A)$ realizes one of the following real schemes:
$$\alpha \quad \sqcup \quad  \langle \beta \rangle \quad \sqcup \quad  \langle \gamma  \rangle, \text{  } \alpha \equiv 1 \pmod{4}, \text{  } \text{with} \text{  }\alpha + \beta + \gamma=15.$$ 
Moreover, all such real schemes are realizable by non-singular real algebraic $M$-curves of bidegree $(5,5)$.
\end{thm}
\begin{proof}
The real schemes listed above are those non-prohibited by Propositions \ref{prop: Harnack-Klein}, \ref{prop: bezout} and $(1)$ of Theorem \ref{thm: mikha_congruences}. In Propositions \ref{prop: big_first_gluing}, \ref{prop: big_first_gluing_2} and \ref{prop: big_first_gluing_3}, we construct non-singular real algebraic $M$-curves of bidegree $(5,5)$ on $X$ realizing all real schemes listed above.
\end{proof}
\begin{thm}[($M-1$) curves]
\label{thm: m-1_curves_thm}
Let $A$ be a non-singular real algebraic ($M-1$)-curve of bidegree $(5,5)$ on $X$. Then the pair $(\mathbb{R}X, \mathbb{R}A)$ realizes one of the following real schemes:
$$\alpha \quad \sqcup \quad \langle\beta \rangle \quad \sqcup \quad  \langle \gamma  \rangle, \text{ } \alpha \equiv 0 \text{ } \text{or} \text{ 1} \pmod{4}, \text{ } \text{with} \text{ } \alpha + \beta + \gamma=14.$$
Moreover, all such real schemes are realizable by non-singular real algebraic ($M-1$)-curves of bidegree $(5,5)$.
\end{thm}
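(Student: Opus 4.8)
**The plan is to split the statement into two parts: (a) the restriction — every non-singular $(M-1)$-curve of bidegree $(5,5)$ realizes one of the listed schemes; and (b) the construction — each listed scheme is realized by such a curve.** For part (a), I would first recall the general position facts: the genus is $g=(5-1)^2=16$, so an $(M-1)$-curve has $l=16$ connected components on $\mathbb{R}X\cong S^2$. Since any collection of disjoint circles in $S^2$ has a nesting forest structure and here the curve has degree that forces depth at most... — more precisely, I would invoke the Bézout-type restrictions for curves on the quadric (intersection with the two rulings, i.e. the pencils of lines of each bidegree $(1,0)$ and $(0,1)$): a line of bidegree $(1,0)$ meets $A$ in $5$ points, so through any point one finds that the nesting depth of the configuration is bounded, which constrains the scheme to the shape $\alpha\sqcup<\beta>\sqcup<\gamma>$ (an outer "empty" part together with at most two nests of depth one). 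This is the same shape appearing in Theorem~\ref{thm: maximal_thm}, and I expect the depth bound to be the routine part. The genuinely arithmetic restriction $\alpha\equiv 0$ or $1\pmod 4$ is the crux of (a): I would derive it from complex orientation formulae / congruences à la Rokhlin–Mishachev for curves on surfaces, together with the type~I/type~II dichotomy recalled just before the statement. Since an $(M-1)$-curve need not be of type~I, one has to treat both the type~I case (where the parity $l\equiv g+1\pmod 2$ i.e. $16\not\equiv 17$ forces it to actually be type~II — wait, $16$ and $17$ have opposite parity, so an $(M-1)$-curve of bidegree $(5,5)$ cannot be of type~I) and conclude it must be of type~II; then the mod-$4$ congruence on $\alpha$ should come from a Rokhlin-type congruence for the number of "even" ovals, exploiting $\beta+\gamma = 14-\alpha$.



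**For part (b), the construction, I would follow the strategy announced in the abstract.** First I would degenerate the quadric ellipsoid $X$ to the quadratic cone, which (after resolving the singularity) transfers the problem to the construction of real algebraic curves in the second Hirzebruch surface $\mathbb{F}_2$ in the appropriate linear system; a curve of bidegree $(5,5)$ on $X$ degenerates to a curve in $|5s_\infty + 10 f|$ or similar on $\mathbb{F}_2$, and conversely a suitable curve on $\mathbb{F}_2$ together with the exceptional data can be perturbed back to a curve on $X$ realizing the same real scheme — here one must control that the perturbation does not create or destroy ovals, which is where the hypotheses $\alpha\equiv 0,1\pmod 4$ and $\alpha+\beta+\gamma=14$ get used to pick the right building blocks. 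Then, on the toric side, I would build the individual pieces by two classical tools: Dessins d'enfants (real trigonal/graph constructions) to produce curves with prescribed chains of ovals, and Viro's patchworking (combinatorial gluing of charts over a subdivision of the Newton polygon) to assemble them; the nests $<\beta>$ and $<\gamma>$ are produced by patchworking an "extra" oval around a previously constructed configuration of $\beta$, resp. $\gamma$, ovals, while the $\alpha$ unnested ovals come from a separate chart. I would present the construction as a finite case analysis on the residue of $\alpha$ mod $4$ (and the small boundary cases where $\beta$ or $\gamma$ is $0$), reducing each to a handful of base patchworking diagrams whose perturbations give the whole family by the standard "small perturbation adds ovals in predictable position" lemmas.

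**The main obstacle I expect is the construction side, specifically making the degeneration-to-$\mathbb{F}_2$ argument rigorous and exhaustive.** The restriction part is essentially bookkeeping with known Bézout and congruence results applied to the quadric, and the shape $\alpha\sqcup<\beta>\sqcup<\gamma>$ is forced just as for $M$-curves. By contrast, on the construction side one has to: (i) check that every admissible triple $(\alpha,\beta,\gamma)$ with $\alpha+\beta+\gamma=14$ and $\alpha\equiv 0,1\pmod 4$ is covered — no triple slips through the net of base diagrams; (ii) verify that the patchworking subdivisions are convex (regular), so that Viro's theorem genuinely applies and the glued curve is algebraic; and (iii) track the real scheme through both the patchworking and the final deformation from $\mathbb{F}_2$ back to the ellipsoid, ensuring the count of ovals and their nesting is exactly as claimed and that no curve of the "wrong" type is produced. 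I would organize this by first constructing, for each residue class, a single flexible "master" curve with the maximal relevant nest, then obtaining the others by removing ovals via perturbation; the flexibility of Dessins d'enfants for trigonal curves on $\mathbb{F}_2$ is what makes step (i) tractable, and I would lean on it to reduce the case analysis to a short, explicit list.
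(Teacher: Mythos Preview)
Your overall architecture matches the paper's: split into restrictions and constructions, pass to $\Sigma_2$ via the quadratic cone, then combine dessins d'enfants with Viro patchworking. But two of the specific mechanisms you invoke on the restriction side are wrong for the ellipsoid, and on the construction side you miss the cases that actually require work.

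\textbf{Restrictions.} Your B\'ezout argument via ``the two rulings, i.e.\ the pencils of lines of bidegree $(1,0)$ and $(0,1)$'' does not apply on the ellipsoid: with the real structure $\sigma(x,y)=(\bar y,\bar x)$ the two rulings are exchanged by conjugation, so there are no real lines of bidegree $(1,0)$. The depth bound (Proposition~\ref{prop: bezout}, Corollary~\ref{cor: bezout_(5,5)}) is obtained instead by intersecting with real hyperplane sections, which are $(1,1)$-curves meeting $A$ in $10$ points; a real circle through one point in the innermost region of each of three pairwise disjoint nests must cross every oval of each nest twice, giving total depth $\le 5$. For the congruence $\alpha\equiv 0$ or $1\pmod 4$: you correctly note that an $(M-1)$-curve here has $16$ ovals and is therefore of type~II, so complex orientation formulas are unavailable. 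The paper does \emph{not} use a Rokhlin--Mishachev orientation identity but Mikhalkin's congruence (Theorem~\ref{thm: mikha_congruences}), an application of Guillou--Marin: for an $(M-1)$-curve with $d=5$ one gets $\chi(B)\equiv 13\pm 1\pmod 8$ for either half $B$ of $S^2\setminus\mathbb{R}A$, and with the scheme $\alpha\sqcup\langle\beta\rangle\sqcup\langle\gamma\rangle$, $\alpha+\beta+\gamma=14$, one computes $\chi(B)\in\{14-2\alpha,\,2\alpha-12\}$, whence $\alpha\equiv 0$ or $1\pmod 4$.

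\textbf{Constructions.} Your ``master curve plus oval removal'' picture covers the generic schemes, and indeed the paper obtains almost all $(M-1)$ schemes by patchworking a $(3,4)$-chart (built from trigonal curves via dessins d'enfants and birational transformations $\Sigma_6\dashrightarrow\Sigma_2$) against a $(2,0)$-chart (Proposition~\ref{prop: big_first_gluing}). What your plan does not anticipate is that two $(M-1)$ schemes, $\langle 4\rangle\sqcup\langle 10\rangle$ and $\langle 7\rangle\sqcup\langle 7\rangle$, fall outside this family and need separate, ad hoc constructions (Propositions~\ref{prop: big_first_gluing_2} and~\ref{prop: big_first_gluing_3}). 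The first uses Orevkov--Shustin $(4,0)$-curves on $\Sigma_2$ glued to a $(2,4)$-curve on $\Sigma_1$ coming from a bigonal curve on $\Sigma_5$; the second requires Shustin's variant of patchworking with deformation patterns to handle a tangency, not just Viro's convex-subdivision theorem. Your proposal's checklist (convexity of subdivisions, exhaustiveness) is the right list of worries, but the exhaustiveness check genuinely fails for these two schemes under the generic recipe, and that is where the substance of the construction part lies.
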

\begin{proof}
The real schemes listed above are those non-prohibited by Propositions \ref{prop: Harnack-Klein}, \ref{prop: bezout} and $(2)$ of Theorem \ref{thm: mikha_congruences}. In Propositions \ref{prop: big_first_gluing}, \ref{prop: big_first_gluing_2} and \ref{prop: big_first_gluing_3}, we construct non-singular real algebraic $(M-1)$-curves of bidegree $(5,5)$ on $X$ realizing all real schemes listed above.
\end{proof}
\begin{thm}[($M-2$)-curves]
\label{thm: type_I,II_m-2_thm}
Let $A$ be a non-singular real algebraic ($M-2$)-curve of bidegree $(5,5)$ on $X$. If $A$ is of type I, then the pair $(\mathbb{R}X, \mathbb{R}A)$ realizes one of the following real schemes:
$$(1) \text{  } \alpha \quad \sqcup \quad \langle \beta \rangle \quad \sqcup \quad  \langle \gamma  \rangle, \text{  } \alpha \equiv 0 \pmod{2}, \text{  } \text{with} \text{ } \alpha + \beta + \gamma=13;$$
if $A$ is of type II, then one of the following ones:
$$(2) \text{  } \alpha \quad \sqcup \quad \langle\beta  \rangle \quad \sqcup \quad \langle \gamma  \rangle, \quad \alpha \not\equiv 2\pmod{4},\quad \text{with} \quad \alpha + \beta + \gamma=13.$$
Moreover, all the real schemes in $(1)$ and $(2)$ are realizable by non-singular real algebraic ($M-2$)-curves of bidegree $(5,5)$ respectively of type I and II.
\end{thm}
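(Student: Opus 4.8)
\emph{Strategy.} As with the two preceding theorems, the statement splits into a prohibition part and a construction part, and the construction is where essentially all of the work lies.

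\emph{Prohibitions.} By the adjunction formula a non-singular curve of bidegree $(5,5)$ on $X$ has genus $g=16$, so an $(M-2)$-curve has exactly $l=15$ real components; note that, contrary to the $(M-1)$ case, here $l$ has the same parity as $g+1$, so a type~I curve is a priori possible and the type genuinely has to be discussed. The Bézout-type restrictions already used for the maximal and $(M-1)$ cases --- intersection of $A$ with real curves of bidegree $(1,1)$ (plane sections of the ellipsoid) and with the two rulings --- show that the real scheme of such an $A$ is necessarily of the form $\alpha \sqcup <\beta> \sqcup <\gamma>$ with $\alpha+\beta+\gamma=13$, so one is left with a finite list of candidate schemes indexed by a bounded triangle of triples $(\alpha,\beta,\gamma)$. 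Onto this finite list I would impose the remaining classical obstructions: the Arnold--Gudkov--Rokhlin type congruences for curves on $X$ (modulo $8$, which for a type~I $(M-2)$-curve descends to a congruence modulo $2$ on $\alpha$, and modulo $4$ for a curve of type~II), the complex orientation formula in the form valid on $X$, and the sharper congruences available for curves of type~I. Running these on $\alpha+\beta+\gamma=13$ together with the type dichotomy leaves exactly the schemes in~(1) for type~I and those in~(2) for type~II; that nothing more survives is guaranteed by the construction part.

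\emph{Constructions.} Following the programme announced in the abstract, I would first degenerate the quadric ellipsoid $X$ to the quadratic cone $Y$ (a singular quadric in $\mathbb{C}P^3$), whose minimal resolution is the second Hirzebruch surface $\mathbb{F}_2$; building a real curve in the appropriate linear system on $\mathbb{F}_2$, pushing it onto $Y$, and perturbing $Y$ together with the curve back to the ellipsoid produces a non-singular real curve of bidegree $(5,5)$ on $X$ whose real scheme on $\mathbb{R}X \cong S^2$ can be read off from the arrangement on $\mathbb{R}\mathbb{F}_2$. On $\mathbb{F}_2$ the curves are then produced by combining two classical toric methods: dessins d'enfants, which supply base curves adapted to the $\mathbb{C}P^1$-fibration of $\mathbb{F}_2$ with explicitly computable real schemes and complex orientations, and Viro's combinatorial patchworking, used both to assemble the two non-empty nests $<\beta>$ and $<\gamma>$ together with the $\alpha$ outer ovals in their prescribed mutual position, and to introduce the remaining free ovals one family at a time. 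Since $(\alpha,\beta,\gamma)$ ranges over a bounded set, finitely many base constructions, each followed by patchworking in extra ovals, exhaust both~(1) and~(2). The type has to be arranged by hand: for~(1) the $(M-2)$-curve must come out of type~I, which I would ensure by realizing it as a smoothing of a real singular curve whose complex orientations are controlled (for instance a union of type~I curves, or a degeneration related to the $M$-curves of Theorem~\ref{thm: maximal_thm}); for~(2) one wants type~II instead, obtained from smoothings that merge a complex-conjugate pair of branches and so destroy the type~I structure.

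\emph{Main obstacle.} The hard part is not the prohibitions but keeping control of the \emph{type} throughout the constructions: patchworking and dessins d'enfants govern the real scheme directly but not, on their own, whether the resulting $(M-2)$-curve is of type~I or~II, so one must track the complex orientation (equivalently the type) through the degeneration $X \rightsquigarrow Y \rightsquigarrow \mathbb{F}_2$ and through every smoothing, especially for the boundary values of $\alpha$ in~(1) and~(2) where the congruences are tight. Making the boundary cases produced by the constructions match exactly the boundary cases left by the prohibitions --- so that the two lists literally coincide --- is the crux of the proof.
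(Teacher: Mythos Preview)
Your overall strategy matches the paper's: prohibitions from Bézout-type nest bounds plus congruences, constructions via the degeneration $X \rightsquigarrow \Sigma_2$ and a mix of dessins d'enfants and patchworking. You also correctly identify control of the type as the crux. But your proposed mechanism for that control is where the gap lies. Suggesting that type~I be arranged by ``smoothing a union of type~I curves'' or by ``degeneration related to the $M$-curves'', and type~II by ``merging a complex-conjugate pair of branches'', is not a workable plan: patchworking of type~I pieces does not automatically produce a type~I result, and there is no general smoothing principle that lets you flip the type on demand while keeping the real scheme fixed. The paper does something much more specific. The dessins d'enfants it uses are not just Orevkov's existence criterion for trigonal $\mathcal{L}$-schemes (Theorem~\ref{thm: existence_trigonal}); it relies crucially on the refinement due to Degtyarev--Itenberg--Zvonilov (Theorem~\ref{thm: existence_trigonal_type_I}), which says that a non-hyperbolic trigonal $\mathcal{L}$-scheme is realized by a type~I (resp.\ type~II) curve if and only if its real graph has a completion admitting a certain $\{1,2,3\}$-labeling (resp.\ not). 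This turns the type question into a purely combinatorial check on the graph, and the paper builds its degree-$5$ and degree-$6$ trigonal graphs by gluing cubic pieces so as to force or forbid such a labeling at will. Those trigonal curves live in $\Sigma_5$ or $\Sigma_6$ and are then brought down to bidegree $(3,4)$ curves in $\Sigma_2$ by explicit birational transformations $\beta_{p_k}^{-1}$ (Section~\ref{subsec: Hirzebruch_surf}), and only afterwards patchworked with bidegree $(2,0)$ pieces to reach bidegree $(5,0)$; this architecture is absent from your outline. A handful of schemes (notably $\langle 4\rangle \sqcup \langle 9\rangle$ of type~I) fall outside this pipeline and are handled separately using Orevkov--Shustin curves of bidegree $(4,0)$ in $\Sigma_2$. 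Without the Degtyarev--Itenberg--Zvonilov labeling criterion and the birational descent step, your construction part would not go through.
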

\begin{proof}
The real schemes listed above are those non-prohibited by Propositions \ref{prop: Harnack-Klein}, \ref{prop: bezout} and $(3)$ of Theorem \ref{thm: mikha_congruences}. In Propositions \ref{prop: big_first_gluing}, and \ref{prop: big_first_gluing_2}, we construct non-singular real algebraic $(M-2)$-curves of bidegree $(5,5)$ on $X$ realizing all real schemes listed above.
\end{proof}
\begin{thm}[Type I and II  curves]
\label{thm: type_I_II_thm}
Let $A$ be a non-singular real algebraic ($M-i$)-curve of bidegree $(5,5)$ on $X$, where  $3 \leq i \leq 17$. If $A$ is of type II, then the pair $(\mathbb{R}X, \mathbb{R}A)$ realizes one of the following real schemes:
$$(1) \text{ } 0 \text{ } \text{and} \text{ } 1,$$
$$(2) \text{ } \alpha \quad \sqcup \quad  \langle \beta  \rangle \quad \sqcup \quad  \langle \gamma  \rangle, \quad \text{with} \quad \alpha + \beta + \gamma=17-(i-2);$$
if $A$ is of type I, then $i=4,6,8,10,12$ and the pair $(\mathbb{R}X, \mathbb{R}A)$ realizes one of the following real schemes:
$$(3) \text{ } \langle \langle \langle \langle 1 \rangle \rangle \rangle \rangle,$$
$$(4) \text{ } \alpha \quad \sqcup \quad  \langle \beta  \rangle \quad \sqcup \quad \langle \gamma  \rangle, \quad \text{with} \quad  \alpha \equiv 0 \pmod{2} \text{  } \text{when} \text{  } \alpha + \beta + \gamma=5,9,$$
$$\text{  } \text{and with} \text{ }  \alpha \equiv 1 \pmod{2} \text{  } \text{when} \text{  } \alpha + \beta + \gamma=7,11.$$
Moreover, the real schemes in $(1),(2)$ and $(3),(4)$ are realizable by non-singular real algebraic curves of bidegree $(5,5)$ respectively of type II and I.
\end{thm}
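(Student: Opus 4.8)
The plan is to separate, as usual in this subject, the \emph{restriction} part (the pair must realize one of the listed schemes) from the \emph{construction} part (every listed scheme is realized). For the restrictions I would combine three classical ingredients. First, the adjunction/genus bound: a curve of bidegree $(5,5)$ has genus $g=16$, so an $(M-i)$-curve has $l=17-i$ real components, and for $3\le i\le 17$ one has at most $14$ ovals, which forces the ``depth'' of the nest to be small — the Bézout-type estimate coming from pencils of curves of bidegree $(1,0)$ and $(0,1)$ on $X$ shows that no configuration can contain a chain of more than a few injective pairs, so only schemes of the form $\alpha\sqcup\langle\beta\rangle\sqcup\langle\gamma\rangle$ (and, for small component count and type~I, the single deep nest $\langle\langle\langle\langle 1\rangle\rangle\rangle\rangle$) survive. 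Second, the complex orientation formula (Rokhlin–Mishachev, in the form valid on $X$): for a type~I curve it relates the numbers of positively and negatively framed injective pairs to $l$ and to the self-intersection, and this is exactly what produces the congruences $\alpha\equiv 0\pmod 2$ when $\alpha+\beta+\gamma=5,9$ and $\alpha\equiv 1\pmod 2$ when $\alpha+\beta+\gamma=7,11$, as well as the restriction that a type~I $(M-i)$-curve can only occur for even $i$ in the relevant range, i.e. $i\in\{4,6,8,10,12\}$. Third, for type~II nothing beyond the parity of the number of components (opposite to $g+1=17$, hence $l$ and $i$ of opposite parity, giving the stated equality $\alpha+\beta+\gamma=17-(i-2)$) and the elementary nesting bound is needed; the schemes $0$ and $1$ are listed separately because for them the $\alpha\sqcup\langle\beta\rangle\sqcup\langle\gamma\rangle$ notation degenerates.

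For the construction part I would follow the strategy announced in the abstract. The first step is to degenerate the quadric ellipsoid $X$ to the quadratic cone and thereby transfer the problem to the construction of curves in the Hirzebruch surface $\Sigma_2$: a real curve in the appropriate linear system on $\Sigma_2$, chosen generic with respect to the exceptional section, yields after the inverse degeneration a real curve of bidegree $(5,5)$ on $X$ whose real scheme is read off in the prescribed way. The second step is to build the required curves on $\Sigma_2$ (equivalently on a suitable toric surface) by \emph{combinatorial patchworking}: I would write down explicit convex triangulations of the relevant Newton polygon together with sign distributions on the lattice points, invoke Viro's theorem to get a real curve with the prescribed topology of $\mathbb{R}A$ inside each toric chart, and then assemble the charts. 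For the ``deep nest'' scheme $\langle\langle\langle\langle 1\rangle\rangle\rangle\rangle$ and for families with large $\alpha$ (many non-nested ovals) a single patchwork is awkward, so I would instead use Dessins d'enfants / the theory of real rational graphs on $\mathbb{C}P^1$ to first produce trigonal or hyperelliptic-type models realizing the extreme cases, and then modify them by local perturbations (smoothing of solitary double points, small Harnack-style handle additions) to sweep out all admissible $(\alpha,\beta,\gamma)$ with the correct parity. Finally, I would check that the curves so produced are of the asserted type (I or II) — typically by exhibiting the complex orientation directly in the patchworking/dessin model, or by a component-count argument together with the fact that maximal and near-maximal curves obtained from primitive patchworkings are of type~I.

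The main obstacle I expect is the construction of the type~I families with the exact parity constraints, and in particular the single scheme $\langle\langle\langle\langle 1\rangle\rangle\rangle\rangle$: realizing a four-fold nest on $X$ with a curve of such low bidegree is tight, and making sure the construction lands on the correct side of every congruence — rather than on the ``wrong'' parity — requires careful bookkeeping of the complex orientations throughout the patchworking and the cone degeneration. A secondary difficulty is closing the list of restrictions: one must make sure the combination of Bézout bounds and the complex orientation formula is \emph{exhaustive}, i.e. that there is no further obstruction excluding some scheme on the list — but this is precisely the claim that the previously known restrictions form a complete system, which is established a posteriori once the matching constructions are in hand. I would therefore organize the proof so that the restriction section produces the candidate list and the construction section, case by case on $i$ and on the type, realizes every candidate, the two together giving the classification.
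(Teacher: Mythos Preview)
Your overall architecture---restrictions first, then constructions via the cone degeneration to $\Sigma_2$---matches the paper, but there are two places where your plan diverges in ways that matter.

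On the restriction side, your claim that for type~II one needs ``the parity of the number of components (opposite to $g+1=17$)'' is wrong: there is \emph{no} parity constraint on $l$ for type~II curves (the congruence $l\equiv g+1\pmod 2$ is a type~I phenomenon). For type~II, Corollary~\ref{cor: bezout_(5,5)} (the B\'ezout nesting bound) is already the complete restriction, which is why list~(2) imposes no condition on $\alpha$. For type~I, the paper does not derive the $\alpha$-parity congruences from the Rokhlin--Mishachev orientation formula as you suggest; it uses Mikhalkin's congruence $\chi(B)\equiv 1\pmod 4$ (the last case of Theorem~\ref{thm: mikha_congruences}, coming from Guillou--Marin), which for the scheme $\alpha\sqcup\langle\beta\rangle\sqcup\langle\gamma\rangle$ reads $\beta+\gamma-\alpha\equiv 1\pmod 4$ and immediately gives the stated parities. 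The orientation formula (Proposition~\ref{prop: orientation_formula}) is used separately, via Corollary~\ref{cor: type_I_tipi}, to force the maximal nest $\langle\langle\langle\langle 1\rangle\rangle\rangle\rangle$ when $l=d=5$.

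On the construction side, your plan to rely on \emph{combinatorial} patchworking (sign distributions on a triangulation) with dessins d'enfants only for ``extreme cases'' is not what the paper does and is unlikely to suffice: combinatorial patchworking gives no direct handle on type~I versus type~II for non-maximal curves, and ``local perturbations'' or ``Harnack-style handle additions'' will not preserve type~I in general. The paper's strategy is the reverse emphasis: dessins d'enfants are the \emph{primary} tool. One first builds real trigonal curves on $\Sigma_5$ and $\Sigma_6$ by completing real graphs (Propositions~\ref{prop: esistenza_in_sigma_5}--\ref{prop: hyperbolic_e_non hyperbolic}), using the criterion of Degtyarev--Itenberg--Zvonilov (Theorem~\ref{thm: existence_trigonal_type_I}) to certify type~I or type~II at this stage. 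These are then birationally transformed to curves of bidegree $(3,4)$ on $\Sigma_2$ (Proposition~\ref{prop: costruzione_più_cicciona}), and only afterwards patchworked---in the general, not combinatorial, sense---with simple bidegree $(2,0)$ curves to obtain bidegree $(5,0)$ curves on $\Sigma_2$ (Propositions~\ref{prop: big_first_gluing}--\ref{prop: big_first_gluing_3}). The type control is inherited from the trigonal building blocks, which is exactly the bookkeeping problem you flag as the main obstacle; the paper solves it upstream rather than at the patchwork step.
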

\begin{proof}
The real schemes listed above are those non-prohibited by Propositions \ref{prop: Harnack-Klein}, \ref{prop: bezout}, \ref{prop: orientation_formula} and $(4)$ of Theorem \ref{thm: mikha_congruences}. In Propositions \ref{prop: big_first_gluing}, \ref{prop: big_first_gluing_2} and \ref{prop: big_first_gluing_3}, we construct non-singular real algebraic $(M-i)$-curves of bidegree $(5,5)$ on $X$ realizing all real schemes listed above, with $3 \leq i \leq 17$.
\end{proof}
\subsection{Structure of the paper}
\label{subsec: structure}
In Section \ref{sec: restrictions}, we present previously known results that give topological restrictions to type II, resp. type I, real algebraic $(M-i)$-curves of bidegree $(d,d)$ on $X$. The content of this paper is the proof that such system of restrictions is complete for bidegree $(5,5)$. Indeed, the main part of the paper concerns the construction of type II, resp. type I, real algebraic $(M-i)$-curves of bidegree $(5,5)$ in $X$ realizing all real schemes which are not prohibited by the restrictions in Section \ref{sec: restrictions}. \\
In Section \ref{subsec: the quadratic cone}, we explain how to construct a real algebraic curve of bidegree $(d,d)$ on the quadric ellipsoid in $\mathbb{C}P^3$ with topology prescribed by the topology of a real algebraic curve of bidegree $(d,0)$ constructed on the second Hirzebruch surface. In the rest of Section \ref{sec: methods_construction}, we introduce Hirzebruch surfaces (Section \ref{subsec: Hirzebruch_surf}) and we give some construction tools we have on such surfaces: 
\begin{itemize}
\item particular birational transformations (Section \ref{subsec: Hirzebruch_surf});
\item Orevkov's method via dessins d'enfants (Section \ref{subsec: dess_enf}).
\end{itemize}
In Section \ref{sec: construction} we end the proof of Theorems \ref{thm: maximal_thm}, \ref{thm: m-1_curves_thm}, \ref{thm: type_I,II_m-2_thm} and \ref{thm: type_I_II_thm}:
\begin{itemize}
\item in Section \ref{subsec: trigonal_construction}, we realize some intermediate constructions on Hirzebruch surfaces using the construction tools of Section \ref{sec: methods_construction};
\item in Section \ref{subsec: patch}, using the Viro's patchworking method (\cite{Viro06}) and a variant of it developed by Shustin (\cite{Shus05}) we finally construct real algebraic curves of bidegree $(5,5)$ on $X$ realizing all real schemes listed in Theorems \ref{thm: maximal_thm}, \ref{thm: m-1_curves_thm}, \ref{thm: type_I,II_m-2_thm} and \ref{thm: type_I_II_thm}.
\end{itemize}
\subsection{Acknowledgments}
I am very grateful to Erwan Brugallé for the support and numerous fruitful discussions throughout my journey to discover many aspects of research regarding the classification of real algebraic curves. I extend my gratitude to Andrés Jaramillo Puentes who was very helpful in my understanding in dessins d'enfants. I would like to thank the referee for the constructive and useful remarks.
\section{Restrictions}
\label{sec: restrictions}
\subsection{Restriction on the depth of nests}
\label{subsec: bezout}
A collection $N_h$ of $h$ disjoint embedded circles in $S^2$ is called \emph{a nest of depth $h$} if any connected component of $S^2\setminus N_h$ is either a disc or an annulus. Two nests are said to be disjoint if each of them lies on one of the discs bounded by the other.
\begin{prop} \cite[Proposition $4.9.2$]{DegKha00}
\label{prop: bezout}
Let $A$ be a non-singular real algebraic curve of bidegree $(d,d)$ on $X$. Then
the total number of ovals in any collection of three pairwise disjoint nests of $\mathbb{R} A$ does not exceed $d$.
\end{prop}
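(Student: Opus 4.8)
The plan is to use a Bézout-type argument, exploiting the fact that $X = \mathbb{C}P^1\times\mathbb{C}P^1$ carries two rulings, each by curves of bidegree $(1,0)$ and $(0,1)$, which meet a curve of bidegree $(d,d)$ in $d$ points counted with multiplicity. Suppose, for contradiction, that $\mathbb{R}A$ contains three pairwise disjoint nests $N_1, N_2, N_3$ of depths $h_1, h_2, h_3$ with $h_1+h_2+h_3 = d+k$, $k\geq 1$. The point is to produce a real curve $C$ of bidegree $(1,1)$ (or, more flexibly, one of the two rulings) that is forced by the nesting structure to intersect $\mathbb{R}A$ in strictly more than $d$ real points, contradicting that the total intersection number $C\cdot A = 2d$ (for bidegree $(1,1)$) or the analysis one gets from the rulings.

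First I would set up the topology on $\mathbb{R}X\cong S^2$: the three nests, being pairwise disjoint, can be arranged so that $N_1$ lies in a disc $D_1$, $N_2$ in a disc $D_2$, $N_3$ in a disc $D_3$ with $D_1, D_2, D_3$ pairwise disjoint in $S^2$ (this is exactly the disjointness hypothesis). Inside $N_i$ there is a chain of $h_i$ ovals, each enclosing the next. Choosing a point in the innermost disc of each nest, I would connect these three points by an arc, or rather choose a real curve in one of the rulings — say a fibre $\{pt\}\times\mathbb{C}P^1$, whose real part is a circle on $S^2$ — passing through the innermost regions of at least two of the nests. A single such circle, threaded appropriately, crosses each oval of those two nests twice, yielding at least $2(h_i+h_j)$ real intersection points with $\mathbb{R}A$; but a fibre meets $A$ in only $d$ points total (with multiplicity, over $\mathbb{C}$), forcing $2(h_i+h_j)\le d$ — not quite strong enough alone. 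The standard trick to get all three nests at once is to use an auxiliary curve of bidegree $(1,1)$: its real part is again a circle on $S^2$ (a $(1,1)$-curve on the ellipsoid is a conic section, real part a circle), and $(1,1)\cdot(d,d) = 2d$ over $\mathbb{C}$. One routes this circle through all three innermost discs; it then meets every oval of all three nests twice, giving at least $2(h_1+h_2+h_3)\geq 2(d+1) > 2d$ real intersection points, the desired contradiction.

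Concretely, the key steps in order: (i) reduce, using the disjointness of the nests, to the combinatorial picture of three concentric families of circles in three disjoint discs of $S^2$; (ii) show that there is a real rational curve of bidegree $(1,1)$ on $X$ whose real locus is an embedded circle visiting the interior of all three nests — this is where one uses that the space of real $(1,1)$-curves is large enough (dimension count: $(1,1)$-curves form a real projective space of dimension $3$, and passing through $3$ general real points is $3$ conditions, so a real $(1,1)$-curve through three prescribed interior points exists, and one checks it can be taken non-singular and transverse to $\mathbb{R}A$); (iii) count: such a circle separates $S^2$ into two discs, and each oval in each nest, lying "across" the circle's path, must be crossed an even number of times, at least twice; summing gives $\geq 2\sum h_i$ real points; (iv) compare with the complex Bézout bound $C\cdot A = 2d$ to conclude $\sum h_i\le d$.

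The main obstacle I expect is step (ii) — arranging a single non-singular real $(1,1)$-curve (circle) that genuinely passes through the innermost region of each of the three disjoint nests \emph{and} meets each oval transversally in exactly two points rather than being tangent or missing some ovals. One must argue that the three innermost open discs, together with the ambient $S^2$, admit such a circle: since the three discs are disjoint, a circle through one point in each bounds on one side a disc containing those three points, and by a small perturbation within the linear system of $(1,1)$-curves one makes it transverse to the (finitely many) ovals; then each oval of $N_i$ separates the chosen interior point of $N_i$ from the "outside", hence is crossed at least twice by any arc — in particular by our circle — joining that interior point to a point outside all nests. Handling the degenerate configurations and ensuring the perturbation stays real and non-singular is the delicate bookkeeping; alternatively one cites the cited reference \cite{DegKha00} for the precise form, but the argument above is the natural self-contained route.
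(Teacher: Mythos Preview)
The paper does not prove this proposition; it is quoted from \cite{DegKha00} without argument. Your B\'ezout approach via a real $(1,1)$-curve is exactly the standard proof, and the core of it is correct: through three real points (one in the innermost disc of each nest) there passes a real hyperplane section of the ellipsoid, its real locus is a circle on $S^2$, each oval of each nest separates one chosen point from the other two and is therefore crossed at least twice, and comparing $2(h_1+h_2+h_3)$ with $(1,1)\cdot(d,d)=2d$ gives the bound.

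One genuine slip to fix: your preliminary remarks about the rulings do not apply here. With the anti-holomorphic involution $\sigma(x,y)=(\bar y,\bar x)$, a fibre $\{pt\}\times\mathbb{C}P^1$ is \emph{not} $\sigma$-invariant (it is sent to $\mathbb{C}P^1\times\{\overline{pt}\}$), so it is not a real curve and its intersection with $\mathbb{R}X$ is a single point, not a circle. Thus the inequality $2(h_i+h_j)\le d$ you derive from a ruling fibre is not available on the ellipsoid. This does not harm your main argument, since you correctly switch to $(1,1)$-curves, but you should delete or rephrase that paragraph. Also, through three fixed real points the $(1,1)$-curve is \emph{unique}, so you cannot perturb it within the linear system while keeping the points; the right move (which you implicitly have) is to perturb the points $p_i$ inside the innermost discs to achieve transversality with $A$.
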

Proposition \ref{prop: bezout} implies in particular that the maximal depth for a nest of such a curve $A$ is $d$. Furthermore, it is well known that if $A$ is of type I and has $d$ ovals, it has a nest of maximal depth $d$ (see Proposition \ref{prop: orientation_formula}).
\\
Combining Proposition \ref{prop: Harnack-Klein} with Proposition \ref{prop: bezout}, one obtains the following immediate result.
\begin{cor}
\label{cor: bezout_(5,5)}
Let $A$ be a non-singular real algebraic curve of bidigree $(5,5)$ on $X$. Then the pair $(\mathbb{R}A,\mathbb{R}X)$ realizes one of the following real schemes:
\begin{itemize}
\item $0$ and $1,$
\item $\alpha \quad \sqcup \quad \langle \beta  \rangle \quad \sqcup \quad \langle \gamma  \rangle, \text{  } \text{for} \text{ } 0 \leq \alpha+\beta+\gamma \leq 15,$
\item $\langle\langle\langle \langle 1 \rangle  \rangle  \rangle \rangle.$
\end{itemize}
\end{cor}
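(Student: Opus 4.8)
The plan is to derive Corollary~\ref{cor: bezout_(5,5)} by combining the two structural inputs already available: the Harnack--Klein bound, which for a curve $A$ of bidegree $(5,5)$ (hence of genus $g=(d-1)^2=16$) says that $\mathbb{R}A$ has at most $17$ connected components, and Proposition~\ref{prop: bezout}, which bounds the total number of ovals in any collection of three pairwise disjoint nests by $d=5$. First I would recall that, since $\mathbb{R}X\cong S^2$, the real scheme of $A$ is encoded as a collection of disjoint circles in $S^2$, so the possible pairs $(\mathbb{R}X,\mathbb{R}A)$ are exactly the topological types $(S^2,\sqcup B_i)$ allowed by these two numerical constraints.

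Next I would split into cases according to the maximal depth $h$ of a nest in $\mathbb{R}A$. If $h=0$ the curve is empty, giving the scheme $0$. If $h\ge 1$, pick an innermost oval; the remaining ovals lie either outside it or inside it, and Proposition~\ref{prop: bezout} forces constraints on how deep and how spread-out the configuration can be. The key observation is that three pairwise disjoint nests of total size exceeding $5$ are forbidden, so essentially the configuration must fit into "at most two non-injective ovals, each carrying a nest, plus some free ovals at the outermost level," which is exactly the shape $\alpha\ \sqcup <\beta>\ \sqcup <\gamma>$; here $\alpha$ counts the outermost free ovals and $<\beta>,<\gamma>$ encode ovals sitting inside two chosen outermost ovals. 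One must check that any injective pair forces a nest of depth $2$, that three disjoint depth-$1$ nests already use $3$ ovals, and more generally that the total oval count $\alpha+\beta+\gamma$ is bounded by $\min(15,17)=15$ once one verifies the depth-$\le 2$ configuration saturating three nests cannot reach the Harnack bound. Finally, the exceptional deep nest: if some nest has depth $\ge 3$, Proposition~\ref{prop: bezout} with a single nest of depth $5$ shows the whole real part is that one nest, and depth exactly $5$ gives $<<<<1>>>>$ (five nested ovals), while depths $3$ and $4$ are sub-cases of $\alpha\ \sqcup<\beta>\sqcup<\gamma>$ with $\alpha=0$ and one of $\beta,\gamma$ being itself a nest — so those are subsumed in the second bullet via the recursive meaning of $<\ \cdot\ >$.

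The main obstacle I expect is bookkeeping the interplay between the two bounds to pin down exactly $\alpha+\beta+\gamma\le 15$ rather than a weaker bound: the Harnack--Klein inequality alone gives $\le 17$, and Proposition~\ref{prop: bezout} restricts only nested configurations, so one has to argue that the presence of any nesting at all costs enough in the Harnack count — or, more cleanly, that the non-nested scheme $\alpha$ with $\alpha=\beta=\gamma=0$ allowed up to $17$ ovals would itself have to be realized by a type~I curve forced into a deep nest by the remark after Proposition~\ref{prop: bezout}, contradicting non-nestedness; this pushes the bound down. Concretely, I would phrase it as: a curve with $16$ or $17$ ovals is an $M$- or $(M-1)$-curve, and known type considerations (a type~I curve with $d$ ovals has a nest of maximal depth, Proposition~\ref{prop: orientation_formula}) together with Proposition~\ref{prop: bezout} rule out $16$ or $17$ disjoint ovals with at most two nests, leaving at most $15$. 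Once that numerical reconciliation is done, the enumeration of admissible $(S^2,\sqcup B_i)$ collapses exactly to the three bullet points, completing the proof.
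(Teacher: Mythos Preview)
Your overall plan---combine the Harnack--Klein bound with Proposition~\ref{prop: bezout} and analyse by maximal nest depth---is exactly what the paper has in mind, but there is a genuine counting error that sends you on an unnecessary and, as written, incorrect detour.

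The scheme $\alpha \sqcup <\beta> \sqcup <\gamma>$ consists of $\alpha+\beta+\gamma+2$ ovals, not $\alpha+\beta+\gamma$: each bracket $<\cdot>$ contributes one extra enclosing oval. Hence the Harnack--Klein inequality $l\le g+1=17$ already gives $\alpha+\beta+\gamma+2\le 17$, i.e.\ $\alpha+\beta+\gamma\le 15$, with no further input. Your attempt to push $17$ down to $15$ via type~I considerations is therefore unnecessary, and in any case the argument you sketch does not work: the statement that a type~I curve with $d$ ovals has a nest of maximal depth concerns curves with exactly $d=5$ ovals, not $M$- or $(M-1)$-curves with $16$ or $17$ ovals, so it says nothing about those cases.

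A second, smaller point: nests of depth $3$ and $4$ are not absorbed into the second bullet ``via the recursive meaning of $<\cdot>$''---in the corollary $\alpha,\beta,\gamma$ are integers, so $<\beta>$ never means $<<1>>$. They are absorbed because the encoding on $S^2$ depends on the basepoint $p$. Choosing $p$ in a middle annulus of a maximal nest, a depth-$3$ nest becomes $<0>\sqcup<1>$ and a depth-$4$ nest becomes $<1>\sqcup<1>$. In general, with $p$ placed in a middle annulus of a maximal nest of depth $h\le 4$, Proposition~\ref{prop: bezout} forces at most two of the outermost ovals (as seen from $p$) to be non-empty and forces their contents to be non-nested, which is precisely the shape $\alpha\sqcup<\beta>\sqcup<\gamma>$. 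For $h=5$ the same proposition forbids any further oval, giving $<<<<1>>>>$.
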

\subsection{Congruences and complex orientation formula on the ellipsoid}
\label{subsec: mikha's_th_cx_orient_formula}
As an application of Guillou-Marin congruences \cite{GuiMar77} (which are a generalization of Rokhlin's congruences), 
Mikhalkin proved the following theorem.
\begin{thm}\cite[Theorem $1$]{Mikh94}
\label{thm: mikha_congruences}
Let $A$ be a non-singular real algebraic $(M-i)$-curve of bidegree $(d,d)$ on $X$, with $d$ odd. Let $B$ be a disjoint union of connected components of $\mathbb{R}X\setminus \mathbb{R}A$ such that $\mathbb{R}A$ bounds $B$.
\begin{enumerate}[label=(\arabic*)]
\item If $A$ is a $M$-curve, then
$$\chi (B)\equiv \frac{d^2 + 1}{2} \pmod{8}.$$
\item If $A$ is a $(M-1)$-curve, then
 $$\chi (B)\equiv \frac{d^2 + 1}{2} \pm 1 \pmod{8}.$$
	\item If $A$ is a $(M-2)$-curve and 
$$\chi (B)\equiv \frac{d^2 -7}{2} \pmod{8},$$
then $A$ is of type I.\\
\item If $A$ is of type I, then 
$$\chi (B)\equiv 1 \pmod{4}.$$
\end{enumerate}
\end{thm}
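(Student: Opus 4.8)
The plan is to derive all four items from the Guillou--Marin congruence~\cite{GuiMar77}, running the Rokhlin--Kharlamov--Marin machinery in the form adapted to real curves on a surface. Recall the congruence: if $Y$ is a closed oriented $4$-manifold and $F\subset Y$ a smoothly embedded closed surface, possibly non-orientable, which is \emph{characteristic} (its class is Poincar\'e dual to $w_2(Y)$), then
\[
\sigma(Y)\;\equiv\;F\cdot F\;+\;2\,\beta(q_F)\pmod{16},
\]
where $q_F\colon H_1(F;\mathbb{Z}/2)\to\mathbb{Z}/4$ is the Guillou--Marin quadratic refinement of the mod-$2$ intersection form of $F$ (defined by framed membranes in $Y$) and $\beta(q_F)\in\mathbb{Z}/8$ is its Brown invariant; when $F$ is orientable, $2\beta(q_F)=8\,\mathrm{Arf}(q_F)$ and one recovers Rokhlin's congruence.

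First I would build the pair $(Y,F)$ out of $(X,\mathbb{C}A,B)$. Since $d$ is odd the class $[\mathbb{C}A]=d(\ell_1+\ell_2)$, with $\ell_1,\ell_2$ the classes of the two rulings of $X$, is not $2$-divisible, so there is no double cover of $X$ branched along $\mathbb{C}A$ alone; one remedies this by branching along $\mathbb{C}A$ together with an auxiliary real curve $C$ of bidegree $(1,1)$ with $\mathbb{R}C=\emptyset$ (a conic $Q\cap H$ for a real plane $H$ disjoint from the ellipsoid), so that, after replacing $\mathbb{C}A\cup\mathbb{C}C$ by a real smoothing $W$ of bidegree $(d+1,d+1)$ in the same class, the double cover $Y\to X$ branched along $W$ exists. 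Because $\mathbb{R}C=\emptyset$ the real part of the branch locus is still $\mathbb{R}A$, so over $\mathbb{R}X=S^2$ the covering is trivial on one half and ramified on the other, and the deck involution and the lifts of $\sigma$ single out a characteristic surface $F\subset Y$ naturally assembled from $B$, from $\mathbb{R}A$ and from the complex curve over $B$. Then $\sigma(Y)$ is computed from $\sigma(X)=0$ and the $G$-signature theorem for branched double covers, $\sigma(Y)=2\sigma(X)-\tfrac12\,[W]\!\cdot\![W]=-(d+1)^2$, while $F\cdot F$ is computed from $\chi(B)$, from the self-intersection of the complex half of $W$ over $B$ (which the complex structure prescribes), and from a constant depending only on the auxiliary data. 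Substituting into the congruence and using $\chi(B)+\chi(S^2\setminus B)=2$ rewrites it as a relation of the shape
\[
\chi(B)\;\equiv\;\tfrac{d^{2}+1}{2}\;+\;(\text{a unit multiple of }\beta(q_F))\pmod{8}.
\]

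Everything then reduces to controlling $\beta(q_F)$, and here the input is the complex orientation of $A$. The complex orientation both provides the complex half used to close up $F$ and, through the complex normal framing, prescribes $q_F$ on the classes of $H_1(F;\mathbb{Z}/2)$ carried by $\mathbb{R}A=\partial B$. When $A$ is an $M$-curve it is of type I, these classes span $H_1(F;\mathbb{Z}/2)$, and one gets $\beta(q_F)=0$, i.e.\ $\chi(B)\equiv\frac{d^{2}+1}{2}\pmod 8$. An $(M-1)$-curve has exactly one extra handle in $F$ on which no complex framing is available; it is non-orientable and contributes a single generator with odd $q_F$-value, so $\beta(q_F)=\pm1$ and $\chi(B)\equiv\frac{d^{2}+1}{2}\pm1\pmod 8$. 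For an $(M-2)$-curve there are two such handles and $\beta(q_F)\in\{0,\pm2,4\}$; the value $4$ occurs exactly when the two handles combine into a single orientable handle carrying a nontrivial Arf invariant, which one shows is only possible for type I curves, and since $\chi(B)\equiv\frac{d^{2}-7}{2}\pmod 8$ corresponds precisely to $\beta(q_F)=4$, this gives the third item. Finally, for any type I curve the complex orientation is defined along all of $\mathbb{R}A$, the membrane computation yields $\beta(q_F)\equiv0\pmod 4$, and as $\frac{d^{2}+1}{2}\equiv1\pmod 4$ for $d$ odd, this gives $\chi(B)\equiv1\pmod 4$.

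The main obstacle is precisely this last step: the Guillou--Marin form $q_F$ is defined abstractly, by bounding embedded curves of $F$ by generic surfaces in $Y$, and must be made explicit in terms of the real scheme. Concretely one has to (i) exhibit a basis of $H_1(F;\mathbb{Z}/2)$ adapted to the ovals of $\mathbb{R}A$ and to the doubling/capping, (ii) for each basis element produce an explicit membrane in $Y$ and evaluate its framed self-intersection and its intersection with $F$, tracking the contributions of the complex normal bundle of the branch curve, of the local ramification picture, and of the auxiliary conic $C$, and (iii) check that the resulting congruence does not depend on the auxiliary choices (the curve $C$, the chosen lift of $\sigma$). This is the Rokhlin-type computation that underlies the complex orientation formula, and carrying it out on $X$ while keeping the bidegree-$(d,d)$ data and the sign conventions attached to $\sigma$ under control is where the real work lies.
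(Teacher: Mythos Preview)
The paper does not prove this theorem; it is quoted verbatim from Mikhalkin~\cite{Mikh94} and only used as a black box in the restrictions part of the classification. The single sentence preceding the statement says that Mikhalkin obtained it ``as an application of Guillou--Marin congruences~\cite{GuiMar77}'', which is exactly the machinery you set up, so your approach matches the one the paper attributes to the original source. There is therefore nothing in this paper to compare your argument against beyond that one-line attribution.

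That said, your sketch is a reasonable outline of the Rokhlin--Kharlamov--Gudkov--Mikhalkin style argument, and you correctly identify the genuine work as the explicit evaluation of the Guillou--Marin form on a basis adapted to the ovals. One point to be careful about: your construction of the characteristic surface via a branched double cover along a smoothing of $\mathbb{C}A\cup\mathbb{C}C$ is one way to proceed, but it is not the only one, and in Mikhalkin's original setting the characteristic surface is built more directly inside $X$ itself (from $B$ and a complex half of $\mathbb{C}A$) rather than in a cover; the bookkeeping of self-intersections and of $\sigma(Y)$ changes accordingly. Either route leads to the same congruences, but if you intend to fill in the details you should fix one model and stick with it, since mixing the two is where sign and constant errors creep in.
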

Let $A$ be a non-singular real algebraic curve of type I on $X$. Fix a complex orientation on $\mathbb{R}A$. Since any pair of ovals of $\mathbb{R}A$ bounds an annulus in $\mathbb{R}X$, we distinguish two types of pairs: denote by $\Pi_-$ (respectively $\Pi_+$) the number of pairs of ovals realizing the same (resp. different) first homology class of the corresponding annulus. As an application of the generalizations of Rokhlin’s formula of complex orientations, Zvonilov in \cite{Zvon83} gave a complex orientation formula for type I non-singular real algebraic curves on $X$. This formula depends on the choice of an auxiliary point in $\mathbb{R}X\setminus \mathbb{R}A$. Afterwards, Orekvov in \cite{Orev207} reformulated it with no dependence on the choice of an auxiliary point. 
\begin{prop} \cite{Zvon83}, \cite[Proposition 1.2]{Orev207}
\label{prop: orientation_formula}
Let $A$ be a non-singular real algebraic type I curve of bidegree $(d,d)$ on $X$. Denoting by $l$ the number of connected components of $\mathbb{R}A$, one has the following complex orientation formula:
\begin{equation}
\label{eqn: rokhlin_ellipsoid}
2(\Pi_+ - \Pi_-)=l-d^2
\end{equation}
\end{prop}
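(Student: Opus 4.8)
The plan is to deduce the formula from the standard Rokhlin-type complex orientation formula by transferring the situation from $X=\mathbb{C}P^1\times\mathbb{C}P^1$ to $\mathbb{C}P^2$ or, more conveniently, to work directly on $X$ using the homological setup for real curves of type I. First I would recall that, since $A$ is of type I, one of the two halves $\mathbb{C}A_+$ of $\mathbb{C}A\setminus\mathbb{R}A$ is an oriented surface with boundary $\mathbb{R}A$, and this orientation induces a complex orientation of $\mathbb{R}A$. The key homological input is that $\mathbb{R}X\cong S^2$ splits along $\mathbb{R}A$ into two pieces; choosing the auxiliary point $p$ amounts to choosing one of the complementary regions as the "outside", and then each oval acquires a well-defined interior disc. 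I would set up the half $\mathbb{C}A_+$ together with one of the two regions of $\mathbb{R}X\setminus\mathbb{R}A$ (say the one containing $p$) to form a closed surface, or better, cap off $\mathbb{C}A_+$ with the appropriate collection of discs in $\mathbb{R}X$ so as to get an integral $2$-cycle whose self-intersection in $H_2(X;\mathbb{Z})$ can be computed in two ways.

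The main computation is a self-intersection count. On one hand, the homology class of $\mathbb{C}A$ is $(d,d)$, so $[\mathbb{C}A]^2 = 2d^2$ in $H_2(X;\mathbb{Z})\cong\mathbb{Z}^2$ with intersection form $\left(\begin{smallmatrix}0&1\\1&0\end{smallmatrix}\right)$; the half $\mathbb{C}A_+$ carries "half" of this, and the discs glued in from $\mathbb{R}X$ contribute according to how the ovals are nested and linked — this is exactly where $\Pi_+$ and $\Pi_-$ enter. On the other hand, the self-intersection of the resulting cycle can be evaluated via the normal Euler number / linking contributions of the pieces. Comparing the two evaluations, and tracking that $\mathbb{R}X\cong S^2$ (so the ambient real surface is simply the $2$-sphere, which controls which homology classes the bounding annuli represent), yields $2(\Pi_+-\Pi_-) = l - d^2$. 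I would lean on Zvonilov's original derivation~\cite{Zvon83} for the shape of the argument and on Orevkov's reformulation~\cite{Orev07} for the fact that the auxiliary-point dependence cancels; in fact the cleanest route is simply to cite that these two sources together establish the stated identity, since the congruences of Theorem~\ref{thm: mikha_congruences} are already invoked from~\cite{Mikh94} in the same spirit.

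The step I expect to be the main obstacle is bookkeeping the orientation and sign conventions consistently: one must pin down (i) the chosen complex orientation on $\mathbb{R}A$, (ii) the induced orientations on the bounding annuli between pairs of ovals in $\mathbb{R}X\cong S^2$, and (iii) the sign with which each such annulus contributes its first homology class, so that the definition of $\Pi_+$ versus $\Pi_-$ matches the self-intersection computation. Getting these compatible is routine but error-prone; the rest (the value $2d^2$ of the self-intersection, the count of $l$ ovals, the fact that capping discs in $S^2$ contribute $\pm 1$) is straightforward. Since this proposition is quoted verbatim from~\cite{Zvon83} and~\cite{Orev07}, I would in practice present it as a citation with a one-paragraph indication of the homological mechanism rather than reprove it in full.
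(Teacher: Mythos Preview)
Your proposal is fine, and in fact your final recommendation matches exactly what the paper does: Proposition~\ref{prop: orientation_formula} is stated in the paper as a cited result from~\cite{Zvon83} and~\cite{Orev07} with no proof given at all, so there is nothing to compare your sketch against. The Rokhlin-type self-intersection outline you describe is the standard mechanism behind such formulas, but since the paper treats the proposition purely as a black-box citation, simply citing it (as you suggest in your last sentence) is precisely the intended treatment.
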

\begin{cor} \cite[Proposition $1.3$]{Orev207}
\label{cor: type_I_tipi}
Let $A$ be a non-singular real algebraic type I curve of bidegree $(d,d)$ on $X$. Then $\mathbb{R}A$ has at least $d$ connected components. Furthermore, in the case where $\mathbb{R}A$ has $d$ connected components, it consists of a nest of maximal depth $d$.
\end{cor}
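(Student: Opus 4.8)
The plan is to deduce Corollary~\ref{cor: type_I_tipi} directly from the complex orientation formula of Proposition~\ref{prop: orientation_formula}, using only elementary counting arguments. First I would observe that if $A$ is of type I with $l$ connected components, then by definition $\Pi_+$ and $\Pi_-$ are non-negative integers with $\Pi_+ + \Pi_- = \binom{l}{2}$, since every unordered pair of ovals contributes to exactly one of the two counts. From Proposition~\ref{prop: orientation_formula} we have $2(\Pi_+-\Pi_-)=l-d^2$, hence in particular $\Pi_+-\Pi_-\le \binom{l}{2}$, which gives the inequality
\begin{equation*}
l-d^2 \le 2\binom{l}{2}=l(l-1),
\end{equation*}
i.e. $l^2-2l \ge -d^2$, equivalently $(l-1)^2\ge 1-d^2$; this is automatically satisfied and so is too weak. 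The right bound instead comes from the \emph{other} direction: $\Pi_-\ge 0$ forces $2\Pi_+ \ge l-d^2$ together with $\Pi_+\le\binom{l}{2}$, giving $l-d^2\le l(l-1)$, again trivial. The genuinely useful inequality is $\Pi_+-\Pi_-\ge -\binom{l}{2}$, i.e. $l-d^2\ge -l(l-1)$, that is $l^2\ge d^2$, hence $l\ge d$. So the first statement follows at once.

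Second, for the case $l=d$ I would feed $l=d$ back into the formula: $2(\Pi_+-\Pi_-)=d-d^2=-d(d-1)$, so $\Pi_+-\Pi_-=-\binom{d}{2}$. Combined with $\Pi_++\Pi_-=\binom{d}{2}$ this forces $\Pi_+=0$ and $\Pi_-=\binom{d}{2}$; that is, \emph{every} pair of ovals of $\mathbb{R}A$ realizes the same first homology class of the annulus it bounds in $\mathbb{R}X\simeq S^2$. The remaining work is purely topological: I must show that $d$ disjoint circles in $S^2$, every pair of which bounds an annulus whose two boundary circles are homologous (with their complex orientations) inside that annulus, must be arranged as a single nest of depth $d$. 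The key point is that for two disjoint circles on $S^2$ one always lies in a disc bounded by the other, so the pair is always ``injective'' in the sense of the introduction; the homology condition on the annulus is what distinguishes the mutual orientations, and I would argue that if two of the circles were \emph{not} nested one inside the other in a linear chain — i.e. if the nesting poset were not a total order — one could find a pair whose annulus has the wrong relative orientation, contradicting $\Pi_+=0$. Concretely, I would use Proposition~\ref{prop: bezout} with $d$ in place of its statement: three pairwise disjoint nests contain at most $d$ ovals, so $d$ ovals forming three disjoint nests is only possible in degenerate ways, and pushing this together with $\Pi_+=0$ pins down the configuration as one nest.

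The main obstacle I anticipate is the second, topological half: translating the algebraic conclusion $\Pi_+=0$ into the combinatorial statement ``the ovals form a single nest of depth $d$''. One has to be careful about orientations and about the convention (fixed in the introduction) for what ``interior'' and ``exterior'' mean on $S^2$, and to check that the homology class of the bounding annulus is exactly the invariant that detects whether two ovals are ``concentric with the same orientation'' versus ``side by side''. I expect the cleanest route is: (i) show $\Pi_+$ counts precisely the non-nested (``non-injective'') pairs once the complex orientation is fixed — this is essentially the definition of $\Pi_\pm$ unwound on $S^2$; (ii) conclude every pair is nested, so the nesting relation is a total order on the $d$ ovals, which is exactly a nest of depth $d$. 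Step (i) is where the real content lies and would likely be dispatched by citing the description of $\Pi_\pm$ from~\cite{Zvon83} or~\cite{Orev07}, or by a short direct homological computation on $S^2$.
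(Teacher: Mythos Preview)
The paper does not give its own proof of Corollary~\ref{cor: type_I_tipi}; it is simply quoted from~\cite[Proposition~1.3]{Orev07}. So there is nothing to compare against, and I evaluate your argument on its own merits.

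Your derivation of $l\ge d$ is correct: since every pair of ovals on $S^2$ bounds a unique annulus, $\Pi_++\Pi_-=\binom{l}{2}$, and together with $\Pi_+\ge 0$ this gives $\Pi_+-\Pi_-\ge -\binom{l}{2}$, hence $l-d^2\ge -l(l-1)$ and $l\ge d$. In the equality case $l=d$ your conclusion $\Pi_+=0$, $\Pi_-=\binom{d}{2}$ is also correct.

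The gap is in your step~(i). The claim ``$\Pi_+$ counts precisely the non-nested (non-injective) pairs'' is false: whether a pair contributes to $\Pi_+$ or $\Pi_-$ depends on the \emph{complex orientations} of the two ovals, not on any nesting relation (on $S^2$ every pair of disjoint ovals bounds an annulus, and the paper's notion of ``injective pair'' requires an auxiliary puncture which is not part of the data). With the wrong orientations a nested pair is $\Pi_+$ and a side-by-side pair is $\Pi_-$. Your appeal to Proposition~\ref{prop: bezout} does not help either: $d$ ovals split into three disjoint nests of total depth $d$ is perfectly allowed by that bound.

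The correct topological step is a short parity argument. If the $d$ ovals do not form a nest, some component $P$ of $S^2\setminus\mathbb{R}A$ has at least three boundary ovals $C_a,C_b,C_c$. Orient $P$ from $S^2$; this induces boundary orientations $\partial_a,\partial_b,\partial_c$, and write the complex orientation on $C_x$ as $\epsilon_x\partial_x$ with $\epsilon_x\in\{\pm1\}$. For each pair, the annulus $A_{xy}$ contains $P$, so its induced boundary orientations agree with $\partial_x,\partial_y$; since $[\partial_x]+[\partial_y]=0$ in $H_1(A_{xy})$, the pair is $\Pi_-$ iff $\epsilon_x\ne\epsilon_y$. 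But three elements of $\{\pm1\}$ cannot be pairwise distinct, so at least one pair is $\Pi_+$, contradicting $\Pi_+=0$. Hence every component of the complement has at most two boundary ovals, i.e.\ the ovals form a nest of depth~$d$.
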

Corollary \ref{cor: bezout_(5,5)} and Theorem \ref{thm: mikha_congruences} give a complete system of restrictions for real schemes of non-singular real algebraic curves of bidegree $(5,5)$ on $X$. Moreover, Theorem \ref{thm: mikha_congruences} and Proposition \ref{prop: orientation_formula} allow us to give even finer restrictions on which real schemes, listed in Corollary \ref{cor: bezout_(5,5)}, may be realized by type I (resp. type II) non-singular real algebraic curves of bidegree $(5,5)$ on $X$. Therefore, given a non-singular real algebraic curve $A$ of bidegree $(5,5)$ on $X$, the pair $(\mathbb{R}X, \mathbb{R}A)$ realizes one of the real schemes listed in Theorems \ref{thm: maximal_thm}, \ref{thm: m-1_curves_thm}, \ref{thm: type_I,II_m-2_thm}, \ref{thm: type_I_II_thm}. \\
In the next sections we pass to the construction part of the classification.
\section{Construction tools}
\label{sec: methods_construction}
\subsection{Hirzebruch surfaces}
\label{subsec: Hirzebruch_surf}
A Hirzebruch surface is a compact complex surface which admits a holomorphic fibration over $\mathbb{C}P^1$ with fiber $\mathbb{C}P^1$. Every Hirzebruch surface is biholomorphic to exactly one of the surfaces $\Sigma_n=\mathbb{P}(\mathcal{O}_{\mathbb{C}P^1}(n)\oplus \mathbb{C})$ for $n \geq 0$. The surface $\Sigma_n$ admits a natural fibration $$\pi_n: \Sigma_n \rightarrow \mathbb{C}P^1$$ with fiber $\mathbb{C}P^1=:F_n$. Denote by $B_n$, resp. $E_n$, the section $\mathbb{P}(\mathcal{O}_{\mathbb{C}P^1}(n)\oplus \{0\})$, resp. $\mathbb{P}(\{0\}\oplus \mathbb{C})$. The self-intersection of $B_n$ (resp. $E_n$ and $F_n$) is $n$ (resp. $-n$ and $0$). When $n \geq 1$, the exceptional divisor $E_n$ determines uniquely the Hirzebruch surface since it is the only irreducible and reduced algebraic curve in $\Sigma_n$ with negative self-intersection.\\
\par For example $\Sigma_0=\mathbb{C}P^1\times \mathbb{C}P^1$. The Hirzebruch surface $\Sigma_1$ is the complex projective plane blown-up at a point, and $\Sigma_2$ is the quadratic cone with equation $Q_0: X^2+ Y^2-Z^2=0$ blown-up at the node in $\mathbb{C}P^3$. The fibration of $\Sigma_2$ (resp. of $\Sigma_1$) is the extension of the projection from the blown-up point to a hyperplane section (resp. to a line) which does not pass through the blown-up point.\\
\par The group $H_2(\Sigma_n; \mathbb{Z})$ is isomorphic to $\mathbb{Z} \oplus \mathbb{Z}$ and is generated by the classes $[ B_n ]$ and $[ F_n ]$. An algebraic curve C in $\Sigma_n$ is said to be of bidegree $(a,b)$ if it realizes the homology class $a[ B_n ] + b[ F_n ]$ in $H_2(\Sigma_n; \mathbb{Z})$. Note that $[E_n]=[B_n]-n[F_n]$ in $H_2(\Sigma_n; \mathbb{Z})$. An algebraic curve of bidegree $(3,0)$ on $\Sigma_n$ is called a \textit{trigonal curve}.
\par We can obtain $\Sigma_{n+1}$ from $\Sigma_n$ via a birational transformation $\beta^p_n: \Sigma_n - - \rightarrow \Sigma_{n+1}$ which is the composition of a blow-up at a point $p\in E_n \subset \Sigma_n$ and a blow-down of the strict transform of the fiber $\pi^{-1}_n(\pi_n(p))$.\\ 
\par The surface $\Sigma_n$ is also the projective toric surface which corresponds to 
the polygon of vertices $(0,0), (0,1),$ $(1,1), (n+1,0)$, depicted in Fig. \ref{fig: polygon_sigma_n} $a)$ where the number labeling an edge corresponds to its integer length. 
The Newton polygon of an algebraic curve $C$ of bidegree $(a,b)$ on $\Sigma_n$, lies inside the trapeze with vertices $(0,0), (0,a), (b,a), (an+b,0)$ as in Fig. \ref{fig: polygon_sigma_n} $b)$. 
The surface $\Sigma_n$ is canonically endowed by a real structure induced by the standard complex conjugation in $(\mathbb{C^*})^2$. 
For this real structure the real part of $\Sigma_n$, denoted by $\mathbb{R}\Sigma_n$, is a torus if $n$ is even and a Klein bottle if $n$ is odd. We will depict $\mathbb{R}\Sigma_n$ as a quadrangle whose opposite sides are identified in a suitable way. Moreover, the horizontal sides will represent $\mathbb{R}E_n$. \\
\begin{figure} [!h]
\begin{center}
\begin{tikzpicture}[scale=0.8]
\draw (0,0) -- (0,3) -- (1,3) -- (4,0) -- (0,0);
\node at (-0.5,1.5) {$1$};
\node at (0.5,3.5) {$1$};
\node at (2,-0.5) {$n+1$};
\node at (3,2) {$1$};
\node at (2,-1) {$a)$ Polygon defining $\Sigma_n$};
\node at (2,-1.8) { };
\end{tikzpicture}
\begin{tikzpicture}[scale=0.8]
\draw (0,0) -- (0,3) -- (1,3) -- (4,0) -- (0,0);
\node at (-0.5,1.5) {$a$};
\node at (0.5,3.5) {$b$};
\node at (2,-0.5) {$an+b$};
\node at (3,2) {$a$};
\node at (2,-1) {$b)$ The Newton polygon of a curve of };
\node at (2,-1.8) {bidegree $(a,b)$ on $\Sigma_n$};
\end{tikzpicture}
\end{center}
\caption{ }
\label{fig: polygon_sigma_n}
\end{figure}
\par The restriction of $\pi_n$ to $\mathbb{R}\Sigma_n$ defines an $S^1$-bundle over $S^1$ that we denote by $\mathcal{L}$. We are interested in the isotopy types with respect to $\mathcal{L}$ of real algebraic curves in $\mathbb{R}\Sigma_n$. 
\begin{defn}
\begin{itemize}
\item []
\item The topological type $\eta$ realized by a pair $(\mathbb{R} \Sigma_{n}, \mathcal{B} )$, where $\mathcal{B}$ is a union of circles and points immersed in $\mathbb{R}\Sigma_n$, is called a real scheme. We say that a real algebraic curve $C \in \mathbb{R}\Sigma_n$ has real scheme $\eta$ if the pair $(\mathbb{R}\Sigma_n, \mathbb{R}C)$ realizes $\eta$.
\item Two real schemes in $\mathbb{R}\Sigma_n$ are \textit{$\mathcal{L}$-isotopic} if there exists an isotopy of $\mathbb{R}\Sigma_n$ which brings one arrangement to the other, each line of $\mathcal{L}$ to another line of $\mathcal{L}$ and whose restriction to $\mathbb{R}E_n$ is an isotopy of $\mathbb{R}E_n$. 
\item A real scheme in $\mathbb{R}\Sigma_n$ up to $\mathcal{L}$-isotopy of $\mathbb{R}\Sigma_n$ is called an \textit{$\mathcal{L}$-scheme}.
\item An $\mathcal{L}$-scheme is \textit{realizable} by a real algebraic curve of bidegree $(a,b)$ in $\Sigma_n$ if there exists such a curve whose real scheme is $\mathcal{L}$-isotopic to the arrangement of circles and points in $\mathbb{R}\Sigma_n$. 
\item A \textit{trigonal} $\mathcal{L}$-scheme is an $\mathcal{L}$-scheme in $\mathbb{R}\Sigma_n$ which intersects each fiber in 1 or 3 real points counted with multiplicities and which does not intersect $\mathbb{R}E_n$.
\item A trigonal $\mathcal{L}$-scheme $\eta$ in $\mathbb{R}\Sigma_n$ is \textit{hyperbolic} if it intersects each fiber in $3$ real points counted with multiplicities.
\end{itemize}
\end{defn}
\subsection{The quadric ellipsoid and the second Hirzebruch surface}
\label{subsec: the quadratic cone}
We explain in this section how to construct a real algebraic curve of bidegree $(d,d)$ on the quadric ellipsoid in $\mathbb{C}P^3$ with topology prescribed by the topology of a real algebraic curve of bidegree $(d,0)$ in $\Sigma_2$, endowed with the canonical real structure. The advantage of working with the real toric surface $\Sigma_2$ is that we dispose on such surface of different construction tools such as: the birational transformations presented in Section \ref{subsec: Hirzebruch_surf}, Orevkov's construction method via dessins d'enfants (Section \ref{subsec: dess_enf}) and Viro's patchworking method (see for example \cite{Viro06}).\\
Let $C$ be a real algebraic curve of degree $(d,0)$ in $\Sigma_2$. Let $\eta$ be the real scheme realized by $\mathbb{R}C$ in $\mathbb{R}\Sigma_2$ (a torus). Now, cut $\mathbb{R}\Sigma_2$ along $\mathbb{R}E_2$, as depicted in $a)$ of Fig. \ref{fig: cutting_torus_to_sphere}, and glue two discs $D_1$, $D_2$ as depicted in $b)$ of Fig. \ref{fig: cutting_torus_to_sphere}. By this construction we obtain a $2$-sphere $S^2$. Moreover, from the arrangement of the triplet $(\mathbb{R}\Sigma_2, \mathbb{R}E_2, \eta)$ we obtain an arrangement $B$ of embedded circles in $S^2$. As example, look at Fig \ref{fig: example_cutting_torus_to_sphere} where we obtain the arrangement $1 \quad \sqcup \quad \langle 1 \rangle$ in $S^2$.\\
\begin{figure}[h!]
\begin{center}
\begin{picture}(100,90)
	\put(-20,0){$a)$}
\put(130,0){$b)$}
\put(102,55){\textcolor{black}{$D_2$}}
\put(-120,10){\includegraphics[width=1\textwidth]{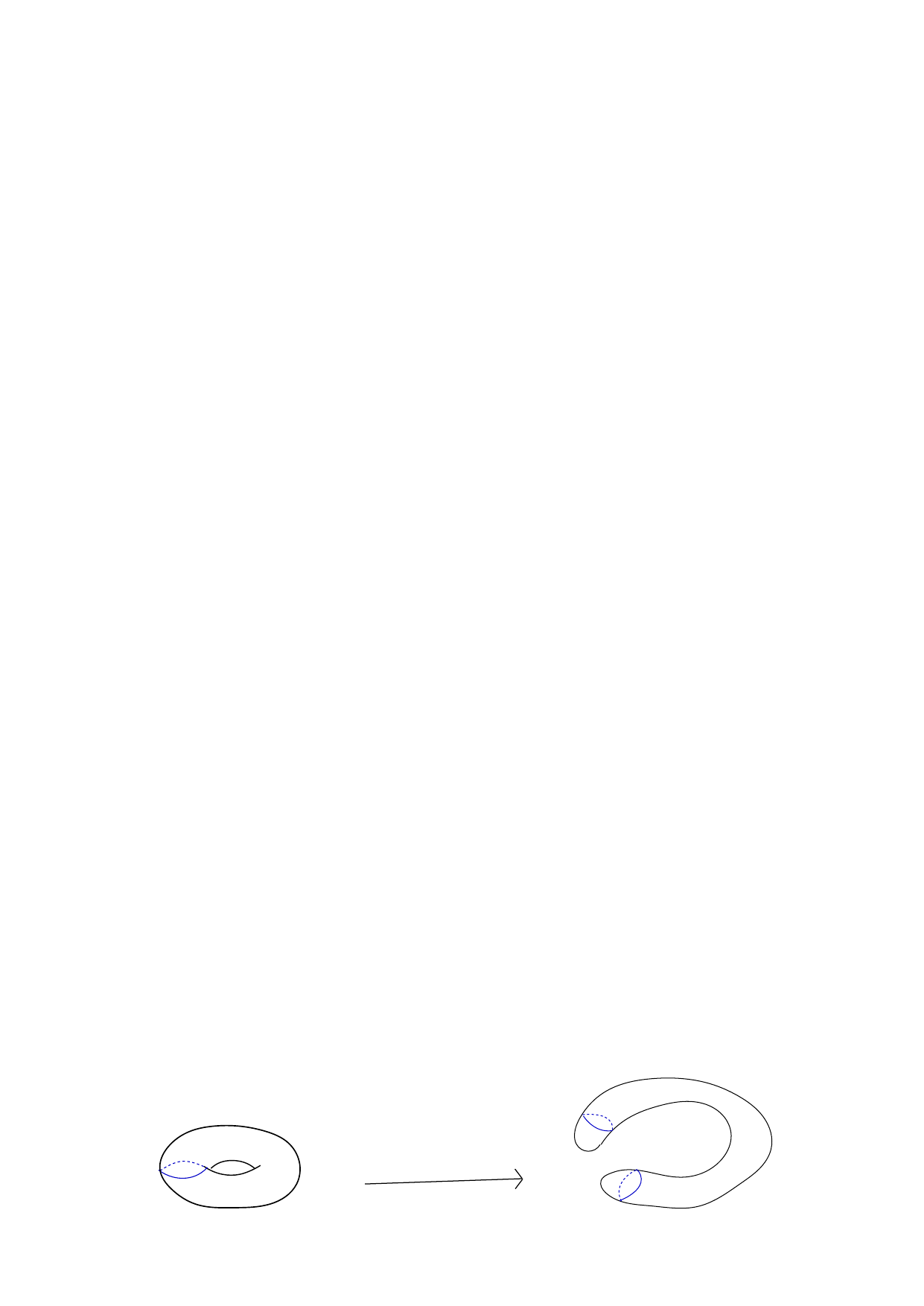}} 
\put(92,75){$D_1$}
\put(-75,55){\textcolor{blue}{$\mathbb{R}E_2$}}
\put(-75,30){$\mathbb{R}\Sigma_2$}
\put(160,30){$S^2$}
\end{picture}
\end{center}
\caption{From a torus to a 2-sphere}
\label{fig: cutting_torus_to_sphere}
\end{figure}
\begin{figure}[!h]
\begin{center}
\begin{picture}(100,90)
	\put(-20,0){$a)$}
\put(130,0){$b)$}
\put(-120,10){\includegraphics[width=1\textwidth]{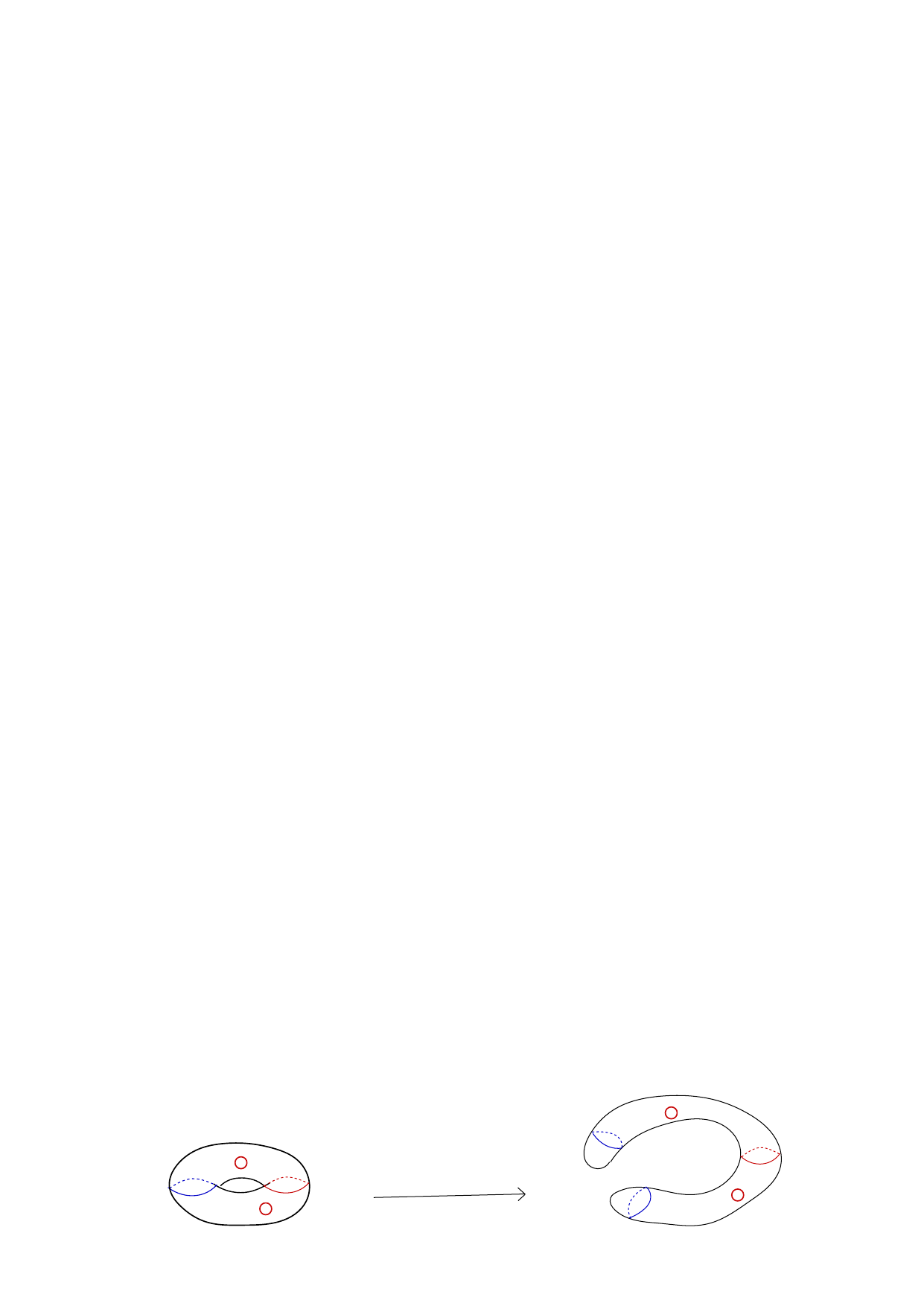}} 
\put(-75,30){$\mathbb{R}\Sigma_2$}
\put(160,20){$S^2$}
\end{picture}
\end{center}
\caption{Example: from an arrangement of embedded circles in $\mathbb{R}\Sigma_2$ to an arrangement in $S^2$}
\label{fig: example_cutting_torus_to_sphere}
\end{figure}
\begin{prop}
\label{prop: construct_on_torus_to_construct_on_ellipsoid}
Let $C$ be a non-singular real algebraic curve of bidegree $(d,0)$ in $\Sigma_2$. Let $B$ be the real scheme on the sphere $S^2$ obtained from the pair $(\mathbb{R}\Sigma_2, \mathbb{R}C)$ by the construction above. Then, $B$ is realizable by a non-singular real curve of bidegree $(d,d)$ on the quadric ellipsoid in $\mathbb{C}P^3$.
\end{prop}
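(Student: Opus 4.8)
The plan is to transport the curve from $\Sigma_2$ to the ellipsoid by a one-parameter degeneration of the ambient quadric, following the idea announced in the abstract of degenerating the ellipsoid to the quadratic cone. First I would use the contraction $\pi\colon\Sigma_2\to Q_0\subset\mathbb{C}P^3$ of $E_2$ to the node of the quadratic cone $Q_0\colon X^2+Y^2-Z^2=0$, which (Section~\ref{subsec: Hirzebruch_surf}) is defined over $\mathbb{R}$ and is an isomorphism away from $E_2$. Since $C$ has bidegree $(d,0)$ and $[E_2]=[B_2]-2[F_2]$, one gets $C\cdot E_2=0$ (using $B_2^2=2$, $B_2\cdot F_2=1$), so $C$, and a fortiori $\mathbb{R}C$, is disjoint from $E_2$. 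Hence $\bar C:=\pi(C)$ is a real non-singular curve on $Q_0$ avoiding the node, isomorphic to $C$ via $\pi$. As $B_2$ is the pullback of the hyperplane class of $Q_0$ and $Q_0$ is projectively normal, the class $d[B_2]$ is cut out by a degree-$d$ hypersurface: $\bar C=Q_0\cap S$ for some real hypersurface $S$ of degree $d$ in $\mathbb{C}P^3$.

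Next I would degenerate the ellipsoid to $Q_0$ through the real pencil $Q_t\colon X^2+Y^2-Z^2+tW^2=0$: for $t>0$ the quadratic form has signature $(3,1)$, so $Q_t$ is a smooth real quadric with $\mathbb{R}Q_t\cong S^2$, that is, up to real projective equivalence the quadric ellipsoid of Section~\ref{sec: introintro} (with the real structure $(x,y)\mapsto(\overline y,\overline x)$), whereas $Q_0$ is the cone. Keeping $S$ fixed and putting $\bar C_t:=Q_t\cap S$, the total family $\{(x,t):x\in S,\ G_t(x)=0\}$ (with $G_t$ the equation of $Q_t$) is a divisor in $S\times\mathbb{A}^1$, flat over $\mathbb{A}^1$, with special fibre $\bar C_0=\bar C$. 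Since $\bar C$ is smooth, $\bar C_t$ is a non-singular curve for all sufficiently small $t$, of bidegree $(d,d)$ on $Q_t\cong\mathbb{C}P^1\times\mathbb{C}P^1$ because $\mathcal{O}_{\mathbb{C}P^3}(1)$ restricts to the $(1,1)$-class. Thus each such $\bar C_t$ is a real non-singular curve of bidegree $(d,d)$ on the quadric ellipsoid.

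Finally I would identify $(\mathbb{R}Q_t,\mathbb{R}\bar C_t)$ with $(S^2,B)$. In the affine chart $W=1$ the local equation at the node is $X^2+Y^2-Z^2=-t$, which for $t>0$ is two disjoint discs (the sheets $Z>0$ and $Z<0$), while $\mathbb{R}Q_0$ near the node is two discs joined at their centres; so $\mathbb{R}Q_t$ is $\mathbb{R}Q_0$ with a neighbourhood of the node replaced by two disjoint discs. Since $\mathbb{R}E_2$ is a section of $\pi_2|_{\mathbb{R}\Sigma_2}\colon\mathbb{R}\Sigma_2\to S^1$, it is a non-separating circle with an annular neighbourhood, which $\pi$ collapses to the node; re-separating it therefore exhibits $\mathbb{R}Q_t$ as $\mathbb{R}\Sigma_2$ cut along $\mathbb{R}E_2$ with two discs $D_1,D_2$ glued in, which is precisely the construction preceding the statement. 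As $\bar C_0$ avoids the node, for small $t$ the curve $\mathbb{R}\bar C_t$ stays in the part coming from $\mathbb{R}\Sigma_2\setminus\mathbb{R}E_2$ and is isotopic there, via $(\pi|_C)^{-1}$ and the degeneration, to $\mathbb{R}C$. Hence $(\mathbb{R}Q_t,\mathbb{R}\bar C_t)$ realizes $B$, and $\bar C_t$ is the required curve.

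The hard part will be the uniform control of the family $\{\bar C_t\}$ as $t\to 0^{+}$: verifying that $\bar C_t$ stays non-singular and of bidegree $(d,d)$, that it remains confined to the region coming from $\mathbb{R}\Sigma_2\setminus\mathbb{R}E_2$ (so the capping discs do not interfere with the isotopy type), and fixing the sign of the deformation so as to land on the ellipsoid rather than on the hyperboloid $\Sigma_0$. The explicit local model $X^2+Y^2-Z^2=-t$ at the node is what makes all of these points transparent, so the bulk of the write-up should be bookkeeping around that local picture together with the standard flatness and smoothness-is-open arguments.
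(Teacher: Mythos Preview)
Your proposal is correct and follows essentially the same approach as the paper: contract $E_2$ to the node of the cone $Q_0$, write the image curve as $Q_0\cap S$ for a real degree-$d$ hypersurface $S$, then perturb $Q_0$ to the ellipsoid $Q_\varepsilon: X^2+Y^2-Z^2+\varepsilon W^2=0$ and take $A=Q_\varepsilon\cap S$. You supply more justification than the paper does (the intersection computation $C\cdot E_2=0$, the flatness/openness argument for non-singularity, and the local topological picture at the node), but the strategy is identical.
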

\begin{figure}[h!]
\begin{center}
\begin{picture}(100,100)
	\put(-20,0){$a)$}
\put(60,0){$b)$}
\put(130,0){$c)$}
\put(-70,-10){\includegraphics[width=0.70\textwidth]{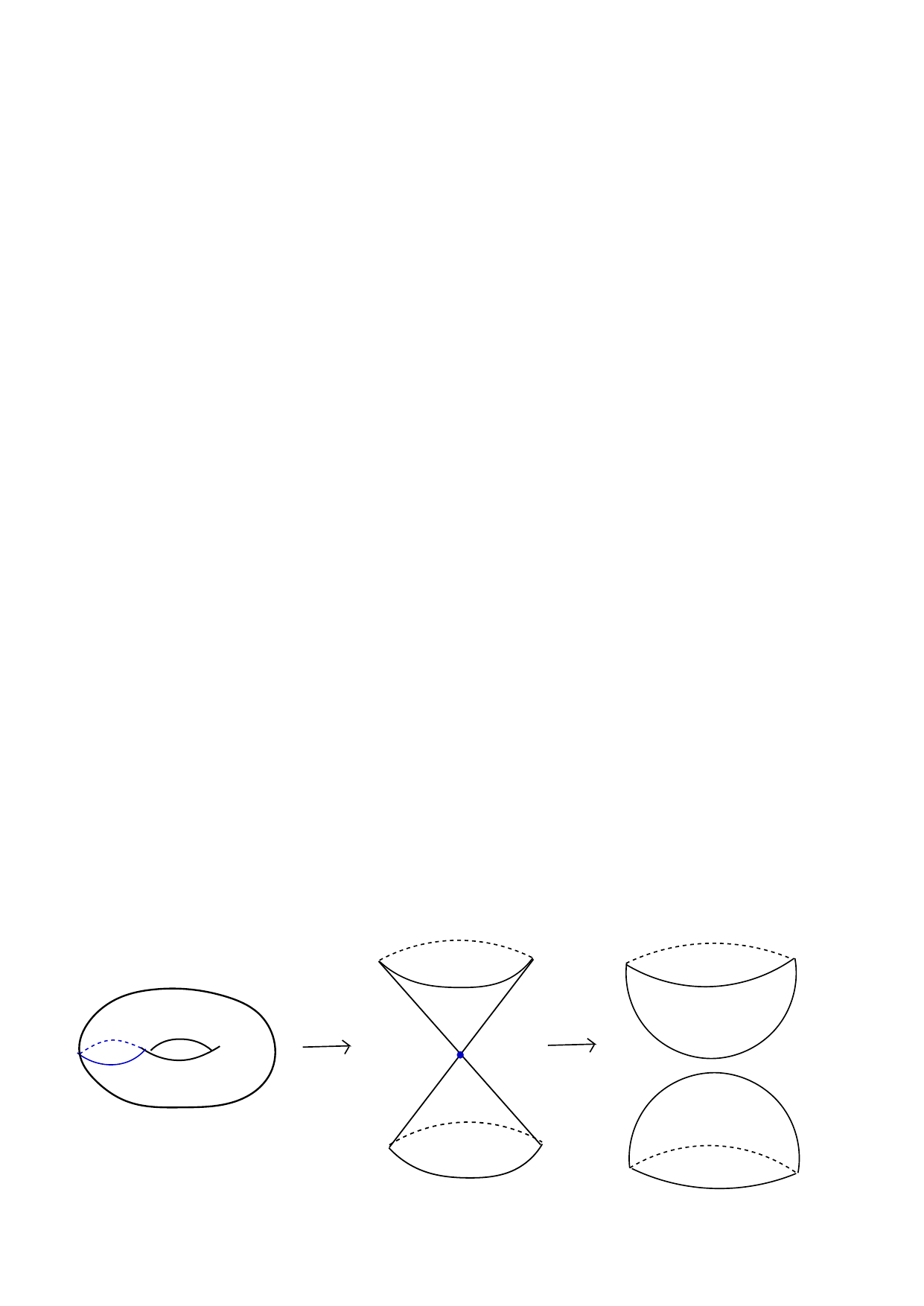}} 
\put(-65,50){\textcolor{blue}{$\mathbb{R}E_2$}}
\put(-55,30){$\mathbb{R}\Sigma_2$}
\put(160,30){$\mathbb{R}Q_{\varepsilon}$}
\put(60,50){\textcolor{blue}{$q$}}
\put(85,30){\textcolor{black}{$\mathbb{R}Q_0$}}
\end{picture}
\end{center}
\caption{ }
\label{fig: torus_cone_sphere}
\end{figure}
\begin{proof}
Let $[X:Y:Z:W]$ be the homogeneous coordinates in the $3$-dimensional complex projective space. Let $Q_0$ be the quadratic cone with equation $X^2+ Y^2-Z^2=0$ in $\mathbb{C}P^3$. 
Recall that we obtain $\Sigma_2$ blowing-up $Q_0$ at the point $q=[0:0:0:1]$.
The image of $C$ via the blow-down is a real algebraic curve $\tilde{C}$ of degree $2d$ in $\mathbb{C}P^3$. Since the dimension of the space of curves of bidegree $(d,0)$ in $\Sigma_2$ is equal to the dimension of the space of complete intersections of the surfaces of degree $d$ in $\mathbb{C}P^3$ with $Q_0$, the curve $\tilde{C}$ is the intersection of a non-singular real algebraic surface $S_d$ of degree $d$, not passing through the node of $Q_0$, and $Q_0$.  
Observe that we can perturb $Q_0$ to the quadric ellipsoid $Q_{\varepsilon}$ of equation $X^2+Y^2-Z^2=-\varepsilon W^2$, where $\varepsilon > 0$ (see Fig. \ref{fig: torus_cone_sphere}). Since a real algebraic curve of bidegree $(d,d)$ on $Q_{\varepsilon}$ is the intersection of the quadric ellipsoid and a surface of degree $d$, the intersection of $S_d$ and $Q_{\varepsilon}$ is a real algebraic curve $A$ of bidegree $(d,d)$. Moreover, the pair $(\mathbb{R}Q_{\varepsilon}, \mathbb{R}A)$ realizes $B$.
\end{proof}

\subsection{Dessins d'enfants}
\label{subsec: dess_enf}
Orevkov in \cite{Orev03} has formulated the existence of real algebraic trigonal curves realizing a given trigonal $\mathcal{L}$-scheme in $\mathbb{R}\Sigma_n$ in terms of the existence of a real rational graph on $\mathbb{C}P^1$. Later on, Degtyarev, Itenberg and Zvonilov in \cite{DegIteZvo14} have given a general way to determine if such real algebraic curves are of type I or II. \\
In Section \ref{subsec: trigonal_construction}, we exploit this construction technique. 
Therefore, we present here some results of \cite{Orev03} and \cite{DegIteZvo14}.
\begin{defn}
\label{defn: complete_graph}
Let $n$ be a fixed positive integer. We say that a graph $\Gamma$ is a \textit{real trigonal graph of degree $n$} if 
\begin{itemize}
\item it is a finite oriented connected graph embedded in $\mathbb{C}P^1$, invariant under the standard complex conjugation of $\mathbb{C}P^1$;
\item it is decorated with the following additional structure:
\begin{itemize}
\item every edge of $\Gamma$ is colored solid, bold or dotted;
\item every vertex of $\Gamma$ is $\bullet$, $\circ$, $\times$ (said \textit{essential vertices}) or monochrome
\end{itemize}
and satisfying the following conditions:
\begin{enumerate}
\item any vertex is incident to an even number of edges; moreover, any $\circ$-vertex (resp. $\bullet$-vertex) to a multiple of $4$ (resp. $6$) number of edges;
\item  for each type of essential vertices, the total sum of edges incident to the vertices of a same type is $12n$;
\item the orientations of the edges of $\Gamma$ form an orientation of $\partial (\mathbb{C}P^1\setminus \Gamma)$ which is compatible with an orientation of $\mathbb{C}P^1\setminus \Gamma$ (see Fig. \ref{fig: vertices_gamma});
\item all edges incidents to a monochrome vertex have the same color;
\item $\times$-vertices are incident to incoming solid edges and outgoing dotted edges;
\item $\circ$-vertices are incident to incoming dotted edges and outgoing bold edges;
\item $\bullet$-vertices are incident to incoming bold edges and outgoing solid  edges.
\end{enumerate}
\end{itemize}
\end{defn}
Let $n$ be a positive integer and let $c(x,y)=y^3+b_2(x)y+b_3(x)$ be a real polynomial, where $b_i(x)$ has degree $in$ in $x$. By a suitable change of coordinates in $\Sigma_n$, any trigonal curve $C$ in $\Sigma_n$ can be put into this form. Denote by $\Delta=-4b_2^3+27b_3^2$ the discriminant of $c(x,y)$ with respect to the variable $y$. The knowledge of the arrangement of the real roots of the real polynomials $\Delta=-4b_2^3+27b_3^2$, $27b_3^2$ and $-4b_2^3$ in $\mathbb{R}\Sigma_n$ allows to recover the trigonal $\mathcal{L}$-scheme realized by $C$ in $\mathbb{R}\Sigma_n$. Let $f: \mathbb{C}P^1 \rightarrow \mathbb{C}P^1$ be the homogenized discriminant, i.e. the rational function defined by $f:=\frac{\Delta}{27b_3^2}$. Orevkov's method allows to construct real polynomials $c(x,y)$ which have prescribed arrangements of the real roots and the construction is based on a consideration of the arrangement of the graph given by $f^{-1}(\mathbb{R}P^1)$ with the coloring and orientation induced by those of $\mathbb{R}P^1$ as depicted in $a)$ of Fig.\ref{fig: coloring_proj_line_locally}. In this section, we only give an algorithmic way to encode any trigonal $\mathcal{L}$-scheme in $\mathbb{R}\Sigma_n$ into a colored oriented graph on $\mathbb{R}P^1 \subset \mathbb{C}P^1$ just looking at the intersections of the fibers of $\mathcal{L}$ with $\eta$; for details see \cite{Bruga07}, \cite{Orev03}.
\begin{figure}[h!]
\begin{center}
\begin{picture}(100,40)
\put(-20,-13){$a)$}
\put(-21,3){\small{$0$}}
\put(-57,3){\small{$\infty$}}
\put(11,3){\small{$1$}}
\put(111,-13){$b)$}
\put(105,32){\rotatebox{90}{$\textcolor{black}{\Big \}}$}}
\put(110,42){$\textcolor{black}{\mathcal{L}_{|_{I}}}$}
\put(-140,-10){\includegraphics[width=1\textwidth]{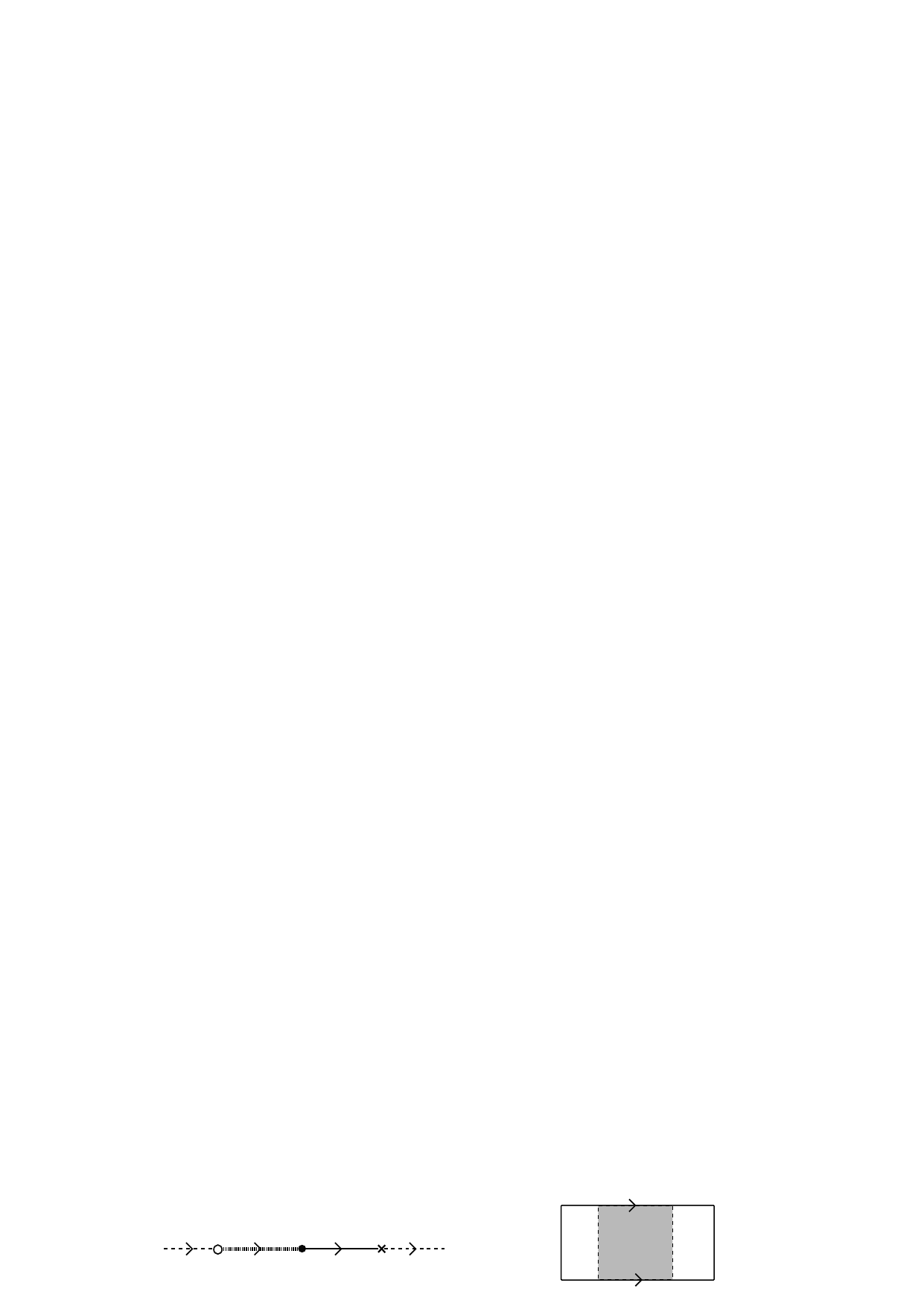}}
\end{picture}
\end{center}
\caption{$a)$ Colored oriented $\mathbb{R}P^1$. $b)$ $\mathcal{L}_{|_{I}}$ in $\mathbb{R}\Sigma_n$.}
\label{fig: coloring_proj_line_locally}
\end{figure} 
\begin{defn}
\label{defn: completable_graph}
\begin{itemize}
Let $\eta$ be a trigonal $\mathcal{L}$-scheme in $\mathbb{R}\Sigma_n$.
For any fixed interval of points $I:=\{(x,\overline{y}): \overline{y} \in \mathbb{R}, \quad x \in [a, a+b] \subset \mathbb{R}, \quad a \not = b\} \subset \mathbb{R}\Sigma_n$, we denote with $\mathcal{L}_{|_{I}}$ the fibers of $\mathcal{L}$ containing the points of $I$ (see $b)$ of Fig. \ref{fig: coloring_proj_line_locally}). Thanks to $\pi_{n|_{\mathbb{R}\Sigma_n}}$ we can encode $\eta$ into a colored oriented graph $\overline{\Gamma}$ on $\mathbb{R}P^1 \subset \mathbb{C}P^1$ as follows (in Fig. \ref{fig: real graph_translate} the dashed lines denote fibers of $\mathcal{L}$):
\begin{enumerate}
\item To each fiber of $\mathcal{L}$ intersecting $\eta$ in two points we associate a $\times$-vertex on $\mathbb{R}P^1$.
\item Fixed an interval $I$, let $F_1,F_2$ be two fibers of $\mathcal{L}_{|_{I}}$ intersecting $\eta$ in two points such that $\eta$, up to $\mathcal{L}$-isotopy, is locally as depicted in $b)$ or $c)$ of Fig. \ref{fig: real graph_translate}. Let $F_3$ be another fiber between $F_1, F_2$. Then, we associate to $F_3$ a $\circ$-vertex on $\mathbb{R}P^1$. Moreover, if between $F_1$ and $F_2$ each other fiber intersects $\eta$ in only one real point (as in $b)$ of Fig. \ref{fig: real graph_translate}), then we associate to a fiber between $F_1$ and $F_3$ (resp. $F_3$ and $F_2$) a $\bullet$-vertex on $\mathbb{R}P^1$. Between $\bullet$ and $\circ$-vertices we put bold edges.
\item For all intervals $I$, except for the fibers of $\mathcal{L}_{|_{I}}$ to which we associate essential vertices and bold edges, we associate dotted (resp. solid) edges on $\mathbb{R}P^1$ to the fibers of $\mathcal{L}_{|_{I}}$ which intersect $\eta$ in three distinct real points (resp. only one real point).
\item The orientations of the edges incident to a vertex are in an alternating order. In particular, the orientations of the edges incident to an essential vertex are respectively as described in $5. ,6. ,7.$ of Definition \ref{defn: complete_graph}.
\end{enumerate}
The graph $\overline{\Gamma}$, called real graph, is considered up to isotopy of $\mathbb{R}P^1$, namely it is determined by the order of its colored vertices since the edges are determined by the color of their adjacent vertices.\\
We say that $\overline{\Gamma}$ is completable in degree $n$ if there exists a complete real trigonal graph $\Gamma$ of degree $n$ such that $\Gamma \cap \mathbb{R}P^1=\overline{\Gamma}$. 
\end{itemize}
\end{defn}
\begin{figure}[h!]
\begin{center}
\begin{picture}(100,90)
\put(-77,-13){$a)$}
\put(-20,-13){$b)$}
\put(54,-13){$c)$}
\put(-43,90){\small{$F_1$}}
\put(-20,90){\small{$F_3$}}
\put(0,90){\small{$F_2$}}
\put(111,-13){$d)$}
\put(40,90){\small{$F_1$}}
\put(50,90){\small{$F_3$}}
\put(60,90){\small{$F_2$}}
\put(184,-13){$e)$}
\put(-120,-50){\includegraphics[width=1\textwidth]{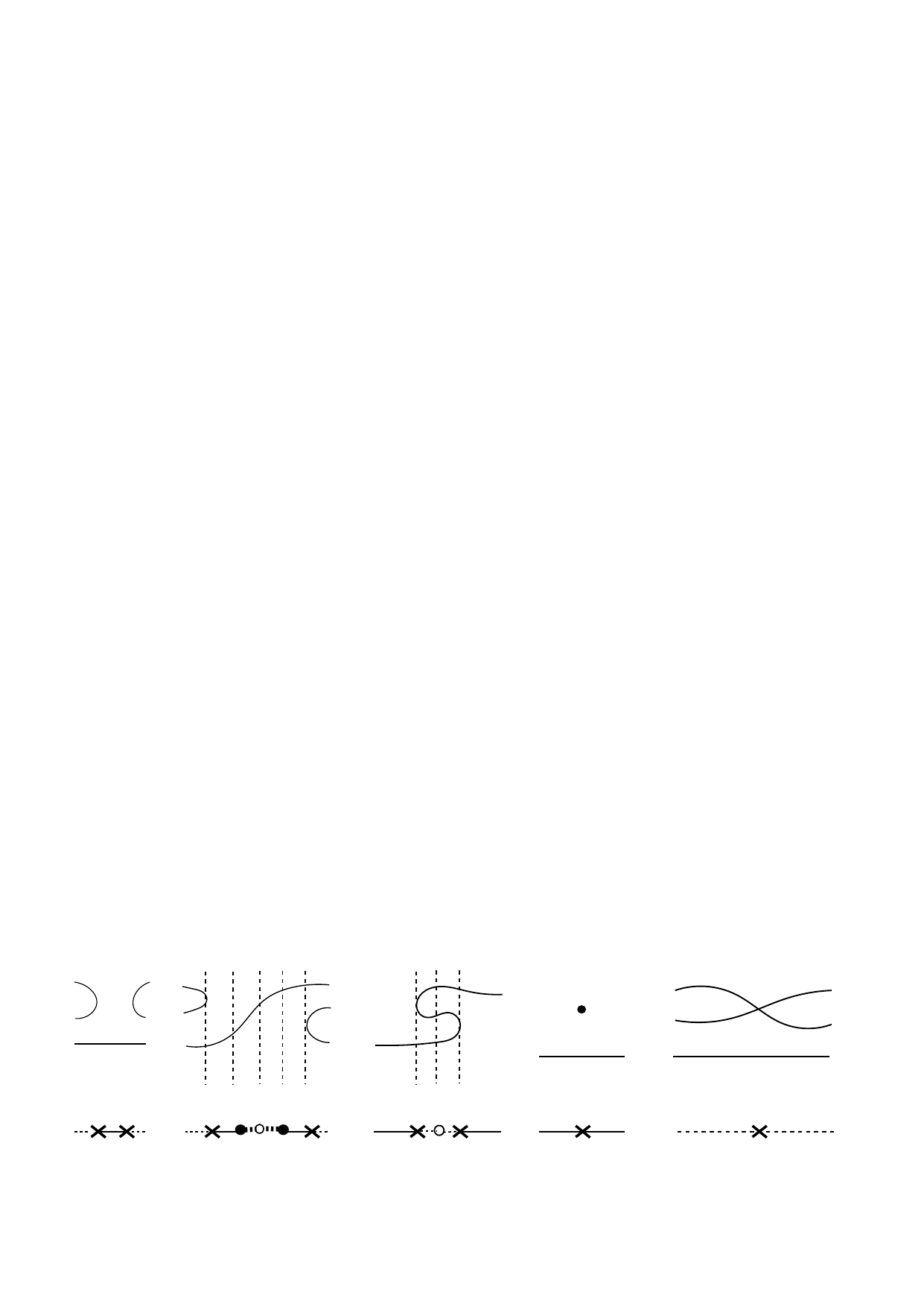}}
\end{picture}
\end{center}
\caption{Local topology of trigonal $\mathcal{L}$-schemes and their corresponding real graphs.}
\label{fig: real graph_translate}
\end{figure} 
\begin{thm}[\cite{Orev03}]
\label{thm: existence_trigonal}
A trigonal $\mathcal{L}$-scheme on $\mathbb{R}\Sigma_n$ is realizable by a real algebraic trigonal curve if and only if its real graph is completable in degree $n$. 
\end{thm}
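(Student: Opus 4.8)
The plan is to route everything through the $j$-invariant of a trigonal curve, establishing a dictionary between real trigonal curves of degree $n$ on $\Sigma_n$ and complete real trigonal graphs of degree $n$ in the sense of Definition~\ref{defn: complete_graph}, under which the real part of the curve in $\mathbb{R}\Sigma_n$ and the trace on $\mathbb{R}P^1$ of the graph correspond exactly as pictured in Figure~\ref{fig: real graph_translate}. Once this correspondence is in place, the theorem becomes a transcription: a trigonal $\mathcal{L}$-scheme $\eta$ is realized by a real trigonal curve $C$ if and only if the graph $\Gamma_C$ attached to $C$ is a complete real trigonal graph of degree $n$ with $\Gamma_C\cap\mathbb{R}P^1$ equal to the real graph $\overline{\Gamma}$ of $\eta$, i.e.\ if and only if $\overline{\Gamma}$ is completable in degree $n$. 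Throughout, $j_C$ denotes the $j$-invariant map $\mathbb{C}P^1\to\mathbb{C}P^1$ of a trigonal curve $C$.

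Concretely, I would first recall that a trigonal curve $C$ on $\Sigma_n$ has a Weierstrass normal form, cut out in an affine chart by $y^3+g_2(x)\,y+g_3(x)=0$ with $\deg g_2\le 2n$, $\deg g_3\le 3n$, and discriminant $\Delta=-4g_2^3-27g_3^2$ of degree $\le 6n$; the $j$-invariant $j_C$, essentially $g_2^3/\Delta$, is then a rational map of degree $6n$ which vanishes to order divisible by $3$ at the zeros of $g_2$, equals $1$ to order divisible by $2$ at the zeros of $g_3$, and has a pole at each zero of $\Delta$, and which commutes with complex conjugation when $C$ is real. Setting $\Gamma_C:=j_C^{-1}(\mathbb{R}P^1)$, decorating $j_C^{-1}(0)$, $j_C^{-1}(1)$, $j_C^{-1}(\infty)$ as $\bullet$-, $\circ$-, $\times$-vertices and the remaining real critical points as monochrome, and colouring the edges over the three arcs of $\mathbb{R}P^1\setminus\{0,1,\infty\}$ solid, bold and dotted, a direct check of the local models $z\mapsto z^e$ converts the above divisibilities, the degree bounds and the equivariance into conditions $(1)$–$(7)$ of Definition~\ref{defn: complete_graph}, condition $(3)$ being the pullback along $j_C$ of an orientation of $\mathbb{C}P^1\setminus\mathbb{R}P^1$. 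Conversely, from a complete real trigonal graph $\Gamma\subset\mathbb{C}P^1$ I would build the covering by the classical developing procedure: glue to each face of $\Gamma$ a copy of one of the two closed discs of $\mathbb{C}P^1\setminus\mathbb{R}P^1$, the even valences and the divisibility by $4$ and $6$ at $\circ$- and $\bullet$-vertices making the branch-point local models fit together, the balance condition $(2)$ fixing the degree at $6n$, condition $(4)$ controlling the monochrome vertices and condition $(3)$ together with the conjugation-invariance of $\Gamma$ making the resulting holomorphic map $j\colon\mathbb{C}P^1\to\mathbb{C}P^1$ real with $j^{-1}(\mathbb{R}P^1)=\Gamma$; one then reconstructs real polynomials $g_2,g_3$ vanishing at the $\bullet$- and $\circ$-vertices to the prescribed orders, normalised by a scaling $(g_2,g_3)\mapsto(\lambda^2 g_2,\lambda^3 g_3)$ so that $j=c\,g_2^3/\Delta$ with $\Delta=-4g_2^3-27g_3^2$, yielding a real trigonal curve $C$ on $\Sigma_n$ with $\Gamma_C=\Gamma$.

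Next I would set up the dictionary of Figure~\ref{fig: real graph_translate} relating $\mathbb{R}C$ and $\Gamma_C\cap\mathbb{R}P^1$. Over a real fibre $\pi_n^{-1}(x)$ the cubic $t^3+g_2(x)t+g_3(x)$ has three real roots, one real root, or a multiple root according to the sign of $\Delta(x)$, and in the first two cases the sign of $g_2(x)$ refines the local topology of $\mathbb{R}C$ near that fibre; on the base these alternatives are recorded precisely by which arc of $\mathbb{R}P^1\setminus\{0,1,\infty\}$ contains $j_C(x)$, respectively by $x$ being a $\bullet$-, $\circ$- or $\times$-vertex. Unwinding the local pictures in this way shows that the real graph $\overline{\Gamma}$ built from $\eta$ in Definition~\ref{defn: completable_graph} is, arc by arc and vertex by vertex, exactly $\Gamma_C\cap\mathbb{R}P^1$ whenever $\mathbb{R}C$ is $\mathcal{L}$-isotopic to $\eta$. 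Combining this with the previous paragraph proves both implications: if $\eta$ is realized by $C$ then $\Gamma_C$ completes $\overline{\Gamma}$ in degree $n$; and if some complete $\Gamma$ restricts to $\overline{\Gamma}$ on $\mathbb{R}P^1$, the curve $C$ reconstructed from $\Gamma$ satisfies $\Gamma_C=\Gamma$, hence $\Gamma_C\cap\mathbb{R}P^1=\overline{\Gamma}$, hence $\mathbb{R}C$ realizes $\eta$.

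The hard part is the converse half of the curve–graph correspondence, that is, manufacturing an honest real algebraic trigonal curve out of the purely combinatorial datum $\Gamma$. Two issues need real care. First, one must check that the developing procedure genuinely produces a holomorphic branched cover of $\mathbb{C}P^1$ and not an inconsistent gluing: this is where the axioms of Definition~\ref{defn: complete_graph} earn their keep — the even valences and the divisibility by $4$ and $6$ are exactly the conditions for the vertex local models to be the square and cube of a holomorphic germ, the balance condition $(2)$ is what makes the three essential fibre classes have the common degree $6n$, and the orientation condition $(3)$ is what allows the cover to be defined over $\mathbb{R}$ and forces the source to be $\mathbb{C}P^1$. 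Second, the Weierstrass data $(g_2,g_3)$ are recovered from $j$ only up to the scaling $(\lambda^2 g_2,\lambda^3 g_3)$, and one must verify that a \emph{real} scalar $\lambda$ can be chosen so that simultaneously the discriminant identity $\Delta=-4g_2^3-27g_3^2$ holds and $\mathbb{R}C$ realizes $\eta$ rather than a companion scheme obtained by interchanging ``one real point'' and ``three real points'' over a band of fibres — precisely the place where the sign information carried by the edge colours and by the orientation of $\Gamma$ is used. The remaining ingredients, namely the forward direction $C\mapsto\Gamma_C$ and the local dictionary of Figure~\ref{fig: real graph_translate}, reduce to a routine fibrewise sign analysis of $\Delta$ and $g_2$; the details are carried out in~\cite{Orev03}.
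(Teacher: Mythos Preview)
The paper does not supply its own proof of this theorem: it is quoted as a result of Orevkov~\cite{Orev03} and used as a black box, so there is no in-paper argument to compare against. Your sketch is nonetheless the right one and is precisely Orevkov's strategy---passing to the Weierstrass form, taking the $j$-invariant $j_C\colon\mathbb{C}P^1\to\mathbb{C}P^1$, and reading off the real trigonal graph as $j_C^{-1}(\mathbb{R}P^1)$ with the colouring induced by the three arcs of $\mathbb{R}P^1\setminus\{0,1,\infty\}$; the converse via the Riemann existence/developing construction and the recovery of $(g_2,g_3)$ up to scaling is also faithful to the original. The two points you flag as delicate (that the axioms of Definition~\ref{defn: complete_graph} are exactly what is needed for the glued cover to be a genuine degree-$6n$ real rational map, and that the real scalar in $(g_2,g_3)\mapsto(\lambda^2 g_2,\lambda^3 g_3)$ must be chosen so that the signs of $\Delta$ and $g_2$ match the prescribed $\mathcal{L}$-scheme rather than a companion) are indeed the substantive steps in~\cite{Orev03}.
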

Given a real graph $\overline{\Gamma}$, we  depict only the completion to a real trigonal graph $\Gamma$ on a hemisphere of $\mathbb{C}P^1$ since $\Gamma$ is symmetric with respect to the standard complex conjugation. Moreover, we can omit orientations in figures representing real trigonal graphs because each vertex is adjacent to an even number of edges oriented in an alternating order as, for example, depicted in Fig. \ref{fig: vertices_gamma} and such orientations are compatible with each others.\\
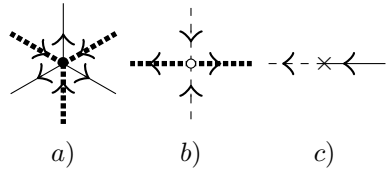
\begin{figure}[h!]
\begin{center}
\begin{tikzpicture} [scale=0.4]
\node at (0,-3) {$a)$};
\node at (0:0) {$\bullet$};
\draw  [densely dotted, line width=0.07cm] (0:0) -- (30:2);
\draw [decoration={markings,mark=at position 1 with {\arrow[scale=2.5]{<}}},
    postaction={decorate},
    shorten >=0.1pt] (30:0.8) -- (30:1);
\draw (0:0) -- (90:2);
\draw [decoration={markings,mark=at position 1 with {\arrow[scale=2.5]{>}}},
    postaction={decorate},
    shorten >=0.1pt] (90:0.8) -- (90:1);
\draw  [densely dotted, line width=0.07cm] (0:0) -- (150:2);
\draw [decoration={markings,mark=at position 1 with {\arrow[scale=2.5]{<}}},
    postaction={decorate},
    shorten >=0.1pt] (150:0.8) -- (150:1);
\draw (0:0) -- (210:2);
\draw [decoration={markings,mark=at position 1 with {\arrow[scale=2.5]{>}}},
    postaction={decorate},
    shorten >=0.1pt] (210:0.8) -- (210:1);
\draw  [densely dotted, line width=0.07cm] (0:0) -- (270:2);
\draw [decoration={markings,mark=at position 1 with {\arrow[scale=2.5]{<}}},
    postaction={decorate},
    shorten >=0.1pt] (270:0.8) -- (270:1);
\draw (0:0) -- (-30:2);
\draw [decoration={markings,mark=at position 1 with {\arrow[scale=2.5]{>}}},
    postaction={decorate},
    shorten >=0.1pt] (-30:0.8) -- (-30:1);
\node at (0:0) {$\bullet$};
\end{tikzpicture}
\begin{tikzpicture}[scale=0.4]
\node at (0,-3) {$b)$};
\draw  [densely dotted, line width=0.07cm] (0:0) -- (0:2);
\draw [decoration={markings,mark=at position 1 with {\arrow[scale=2.5]{>}}},
    postaction={decorate},
    shorten >=0.1pt] (0:1);
\draw [dashed] (0:0) -- (90:2);
\draw [decoration={markings,mark=at position 1 with {\arrow[scale=2.5]{<}}},
    postaction={decorate},
    shorten >=0.1pt] (90:1) -- (90:1.1);
\draw [densely dotted, line width=0.07cm] (0:0) -- (180:2);
\draw [decoration={markings,mark=at position 1 with {\arrow[scale=2.5]{<}}},
    postaction={decorate},
    shorten >=0.1pt] (180:1);
\draw [dashed] (0:0) -- (-90:2);
\draw [decoration={markings,mark=at position 1 with {\arrow[scale=2.5]{<}}},
    postaction={decorate},
    shorten >=0.1pt] (-90:1) -- (-90:1.1);
\draw [fill=white] (0,0) circle (1.5mm);
\end{tikzpicture}
\begin{tikzpicture}[scale=0.4]
\node at (0,-3) {$c)$};
\node at (0:0) {$\times$};
\draw (0:0) -- (0:2);
\draw [decoration={markings,mark=at position 1 with {\arrow[scale=2.5]{<}}},
    postaction={decorate},
    shorten >=0.1pt] (0:1);
\draw [dashed] (0,0) -- (0:-2);
\draw [decoration={markings,mark=at position 1 with {\arrow[scale=2.5]{<}}},
    postaction={decorate},
    shorten >=0.1pt] (0:-1);
\end{tikzpicture}
\end{center}
\caption{Colored vertices of a real trigonal graph.}
\label{fig: vertices_gamma}
\end{figure}
Theorem \ref{thm: existence_trigonal} is improved in \cite{DegIteZvo14} in order to check if a given trigonal $\mathcal{L}$-scheme is realizable by a real trigonal curve of type I. We say that a real algebraic singular curve is of type I (resp. of type II) if its normalization is of type I (resp. of type II).
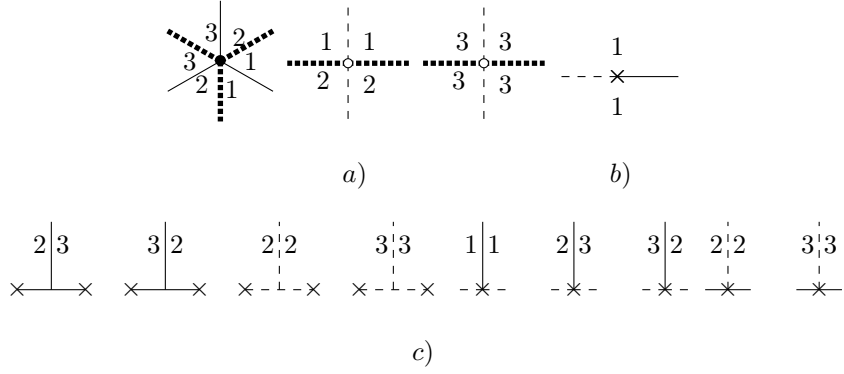
\begin{figure}[h!]
\begin{center}
\begin{tikzpicture} [scale=0.4]
\node at (0:0) {$\bullet$};
\node at (0:1) {\textcolor{black}{1}};
\draw  [densely dotted, line width=0.07cm] (0:0) -- (30:2);
\node at (52:1) {\textcolor{black}{2}};
\draw (0:0) -- (90:2);
\node at (107:1) {\textcolor{black}{3}};
\draw  [densely dotted, line width=0.07cm] (0:0) -- (150:2);
\node at (180:1) {\textcolor{black}{3}};
\draw (0:0) -- (210:2);
\node at (232:1) {\textcolor{black}{2}};
\draw  [densely dotted, line width=0.07cm] (0:0) -- (270:2);
\node at (292:1) {\textcolor{black}{1}};
\draw (0:0) -- (-30:2);
\node at (0:0) {$\bullet$};
\end{tikzpicture}
\begin{tikzpicture}[scale=0.4]
\draw  [densely dotted, line width=0.07cm] (0:0) -- (0:2);
\node at (45:1) {\textcolor{black}{1}};
\draw [dashed] (0:0) -- (90:2);
\node at (135:1) {\textcolor{black}{1}};
\draw  [densely dotted, line width=0.07cm] (0:0) -- (180:2);
\node at (215:1) {\textcolor{black}{2}};
\draw [dashed] (0:0) -- (-90:2);
\node at (-45:1) {\textcolor{black}{2}};
\draw [fill=white] (0,0) circle (1.5mm);
\end{tikzpicture}
\begin{tikzpicture}[scale=0.4]
\draw  [densely dotted, line width=0.07cm] (0:0) -- (0:2);
\node at (45:1) {\textcolor{black}{3}};
\draw [dashed] (0:0) -- (90:2);
\node at (135:1) {\textcolor{black}{3}};
\draw  [densely dotted, line width=0.07cm] (0:0) -- (180:2);
\node at (215:1) {\textcolor{black}{3}};
\draw [dashed] (0:0) -- (-90:2);
\node at (-45:1) {\textcolor{black}{3}};
\draw [fill=white] (0,0) circle (1.5mm);
\end{tikzpicture}
\begin{tikzpicture}[scale=0.4]
\node at (0:0) {$\times$};
\draw (0:0) -- (0:2);
\node at (90:1) {\textcolor{black}{1}};
\draw [dashed] (0:0) -- (180:2);
\node at (-90:1) {\textcolor{black}{1}};
\node at (0:0) {$\times$};
\end{tikzpicture}
\end{center}
\begin{tikzpicture}[yscale=2]
\node at (13,0) {  };
\node at (18.5,0) {$a)$};
\node at (22,0) {$b)$};
\end{tikzpicture}
\begin{center}
\begin{tikzpicture}[scale=0.3]
\draw (1.5,0) -- (1.5,3);
\draw  (0,0) -- (3,0);
\node at (0,0) {$\times$};
\node at (3,0) {$\times$};
\node at (1,2) {$2$};
\node at (2,2) {$3$};

\begin{scope}[xshift=5cm]
\draw (1.5,0) -- (1.5,3);
\draw  (0,0) -- (3,0);
\node at (0,0) {$\times$};
\node at (3,0) {$\times$};
\node  at (1,2) {$3$};
\node  at (2,2) {$2$};
\end{scope}

\begin{scope}[xshift=5cm]
\node at (5,0) {$\times$};
\node at (8,0) {$\times$};
\draw [dashed] (5,0) -- (8,0);
\draw [dashed] (6.5,0) -- (6.5,3);
\node  at (6,2) {$2$};
\node at (7,2) {$2$};
\end{scope}

\begin{scope}[xshift=10cm]
\node at (5,0) {$\times$};
\node at (8,0) {$\times$};
\draw [dashed] (5,0) -- (8,0);
\draw [dashed] (6.5,0) -- (6.5,3);
\node at (6,2) {$3$};
\node  at (7,2) {$3$};
\end{scope}
\end{tikzpicture}
\begin{tikzpicture} [scale=0.3]
\draw (10,0) -- (10,3);
\draw [dashed]  (11,0) -- (10,0);
\node at (10,0) {$\times$};
\draw [dashed]  (9,0) -- (10,0);
\node  at (9.5,2) {$1$};
\node  at (10.5,2) {$1$};
\begin{scope}[xshift=4cm] 
\draw (10,0) -- (10,3);
\draw [dashed]  (11,0) -- (10,0);
\node at (10,0) {$\times$};
\draw [dashed]  (9,0) -- (10,0);
\node  at (9.5,2) {$2$};
\node  at (10.5,2) {$3$};
\end{scope}
\begin{scope}[xshift=8cm]
\draw (10,0) -- (10,3);
\draw [dashed]  (11,0) -- (10,0);
\node at (10,0) {$\times$};
\draw [dashed]  (9,0) -- (10,0);
\node  at (9.5,2) {$3$};
\node  at (10.5,2) {$2$};
\end{scope}
\end{tikzpicture}
\begin{tikzpicture} [scale=0.3]
\draw [dashed] (10,0) -- (10,3);
\draw  (11,0) -- (10,0);
\node at (10,0) {$\times$};
\draw   (9,0) -- (10,0);
\node  at (9.5,2) {$2$};
\node  at (10.5,2) {$2$};
\begin{scope}[xshift=4cm] 
\draw [dashed] (10,0) -- (10,3);
\draw  (11,0) -- (10,0);
\node at (10,0) {$\times$};
\draw  (9,0) -- (10,0);
\node  at (9.5,2) {$3$};
\node  at (10.5,2) {$3$};
\end{scope}
\end{tikzpicture}
\end{center}
\begin{tikzpicture}[yscale=2]
\node at (13,0) {  };
\node at (19.4,0) {$c)$};
\end{tikzpicture}
\caption{Type I labeling.}
\label{fig: numerazione_typeI}
\end{figure}
\begin{defn}
\label{defn: numerazione_typeI}
Let $\Gamma$ be a real trigonal graph of degree $n$. We say that $\Gamma$ is of \textit{type I} if we can label each connected component of $\mathbb{C}P^1 \setminus \Gamma$, with the numbers $1,2$ or $3$, such that: 
\begin{itemize}
\item neighboring connected components of a $\bullet$-vertex, or a $\circ$-vertex of $\Gamma$, are labeled as depicted in one of the pictures in $a)$ of Fig. \ref{fig: numerazione_typeI};
\item neighboring connected components of a $\times$-vertex, which does not belong to $\Gamma\cap \mathbb{R}P^1$, are labeled as depicted in $b)$ of Fig. \ref{fig: numerazione_typeI};
\item neighboring connected components of $\times$-vertices, belonging to $\Gamma\cap \mathbb{R}P^1$, are labeled as depicted in in $c)$ of Fig. \ref{fig: numerazione_typeI}.
\end{itemize}
Otherwise, we say that $\Gamma$ is of \textit{type II}.
\end{defn}
The original statement in \cite{DegIteZvo14} of the following theorem treats only the case of non-singular real trigonal curve, but it is possible to extend it to real nodal trigonal curves (\cite{Jara18}).
\begin{thm}[\cite{DegIteZvo14}]
\label{thm: existence_trigonal_type_I}
A non-hyperbolic trigonal $\mathcal{L}$-scheme on $\mathbb{R}\Sigma_n$ is realizable by a real trigonal curve of type I (resp. of type II) if and only if its real graph has a completion in degree $n$ which is of type I (resp. type II).
\end{thm}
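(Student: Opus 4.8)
The plan is to deduce the nodal statement from the non-singular one established in~\cite{DegIteZvo14}, by checking that the passage from a nodal trigonal curve to its normalization affects neither the dessin d'enfant dictionary nor the type~I labeling of Definition~\ref{defn: numerazione_typeI}.

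First I would recall the framework in the nodal setting. A real, possibly nodal, trigonal curve $C$ on $\Sigma_n$ carries a $j$-invariant map $j_C\colon \mathbb{C}P^1\to\mathbb{C}P^1$ built fiberwise from the cubics cut out by $C$, and its associated real trigonal graph is $\Gamma_C=j_C^{-1}(\mathbb{R}P^1)$ (the target $\mathbb{R}P^1$ being the line through $0,1,\infty$), with the decoration induced by the three arcs of that line and by the preimages of $0$, $1$, $\infty$. By Theorem~\ref{thm: existence_trigonal} --- already formulated for trigonal $\mathcal{L}$-schemes, which in this paper may carry crossing or solitary points --- the graph $\Gamma_C$ is a completion in degree $n$ of the real graph $\overline\Gamma$ of $\eta$, and conversely any completion is $\Gamma_C$ for some realizing $C$. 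The type of $C$ is by definition that of its normalization $\tilde C$, and $\tilde C\to\mathbb{C}P^1$ is a degree-$3$ covering. The crucial observation is that every singular point of $C$ lies over a $\times$-vertex of $\Gamma_C$: the fiber through a node meets $C$ there with multiplicity two, so the corresponding cubic has a double root and $j_C$ has a pole. Hence nodes introduce no new kind of vertex into $\Gamma_C$.

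Next I would transcribe the argument of~\cite{DegIteZvo14} with $\tilde C$ playing the role of the curve. The type~I condition for $\tilde C$ --- disconnectedness of $\mathbb{C}\tilde C\setminus\mathbb{R}\tilde C$, equivalently a globally consistent choice of one half --- corresponds, through $j_C$, to a locally constant labeling of the components of $\mathbb{C}P^1\setminus\Gamma_C$ by $\{1,2,3\}$ subject to local constraints at the essential vertices. These constraints are read off from the local monodromy of the elliptic fibration near a preimage of $0$, of $1$, and of $\infty$, together with the behaviour of the real structure, and they are exactly those depicted in Figure~\ref{fig: numerazione_typeI}. The single point that goes beyond~\cite{DegIteZvo14} is the local picture at a $\times$-vertex of $\Gamma_C\cap\mathbb{R}P^1$ carrying a node of $C$: normalization separates the double point into two distinct points of $\tilde C$, so $\tilde C\to\mathbb{C}P^1$ is unramified over such a vertex, just as over a $\times$-vertex arising from a pair of sheets exchanged by conjugation, and one checks directly that the resulting local labeling constraint is already among those listed in Figure~\ref{fig: numerazione_typeI}~$c)$. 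This confirms that Definition~\ref{defn: numerazione_typeI} is the right notion for nodal $C$.

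Both implications then follow. If the non-hyperbolic trigonal $\mathcal{L}$-scheme $\eta$ is realized by a real nodal trigonal curve $C$ of type~I, then $\Gamma_C$ completes $\overline\Gamma$ in degree $n$, and a witness of type~I of $\tilde C$, transported through $j_C$, gives a $\{1,2,3\}$-labeling of $\mathbb{C}P^1\setminus\Gamma_C$ obeying Definition~\ref{defn: numerazione_typeI} at every vertex, including the nodal $\times$-vertices by the local check above; so $\Gamma_C$ is a type~I completion. Conversely, given a type~I completion $\Gamma$ of $\overline\Gamma$ in degree $n$, Theorem~\ref{thm: existence_trigonal} provides a real trigonal curve $C$ realizing $\eta$ with $\Gamma_C=\Gamma$; the labeling on $\Gamma$ determines the splitting of $\mathbb{C}\tilde C\setminus\mathbb{R}\tilde C$ into two halves, so $C$ is of type~I. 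The type~II assertions are immediate, as ``type~II'' means ``not of type~I'' both for curves and for completions. I expect the only real work to be the local verification at the $\times$-vertices that carry nodes --- that resolving a node leaves the labeling criterion unchanged --- the remainder being a faithful copy of the proof in~\cite{DegIteZvo14}.
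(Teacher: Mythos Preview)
The paper does not prove this theorem: it is quoted from \cite{DegIteZvo14} for non-singular trigonal curves, with only the one-line remark that ``it is possible to extend it to real nodal trigonal curves''. There is thus no proof in the paper to compare your proposal against.

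Your proposal does precisely what the paper omits --- it sketches the extension to the nodal case --- and the strategy is the natural one. The two observations that carry the argument are correct: a node of $C$ forces the fiber through it to meet $C$ with multiplicity at least two, so the corresponding point on the base is a pole of $j_C$ and hence a $\times$-vertex; and after normalization the covering $\tilde C\to\mathbb{C}P^1$ is unramified over that point, so the local monodromy there is trivial and the labeling constraint is among those already listed in Figure~\ref{fig: numerazione_typeI}~$c)$. With those checks in hand, the type~I/type~II criterion of \cite{DegIteZvo14} transfers verbatim, since the type of a singular curve is by definition the type of its normalization. One small caveat: you should distinguish the two kinds of real nodes (a real crossing versus a solitary real point) and verify separately that each matches a pattern in Figure~\ref{fig: numerazione_typeI}~$c)$; the local pictures differ (one has three real branches of $\tilde C$ over it, the other only one), but both are indeed covered by the list.
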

\begin{rem}
\label{rem: labeling_just_one_number}
For each non-hyperbolic trigonal $\mathcal{L}$-scheme on $\mathbb{R}\Sigma_n$ realizable by an irreducible real trigonal curve of type I, each completion in degree $n$ of its real graph has a unique type I labeling (see \cite{DegIteZvo14}). So, later on, each time we have to assign a labeling to a real trigonal graph of type I, we could label only one component. 
\end{rem}
\begin{rem}
\label{rem: hyperbolic_type I}
If a hyperbolic trigonal $\mathcal{L}$-scheme in $\mathbb{R}\Sigma_n$ is realizable by a real trigonal curve $C$ in $\Sigma_n$, then the curve $C$ is of type I because the projection $\pi_n: \Sigma_n \rightarrow \mathbb{C}P^1$ (see Section \ref{subsec: Hirzebruch_surf}) gives a totally real morphism on $\mathbb{C}P^1$.
\end{rem}
\subsubsection{Gluing real trigonal graphs}
\label{subsubsec: gluing cubics}
\begin{figure}[h!]
\begin{center}
\begin{picture}(100,145)
\put(-50,130){$2$}
\put(-30,130){$2$} 
\put(-60,120){$3$} 
\put(-10,125){$3$}
\put(-65,95){$3$} 
\put(-13,100){$3$} 
\put(-80,-15){\includegraphics[width=0.7\textwidth]{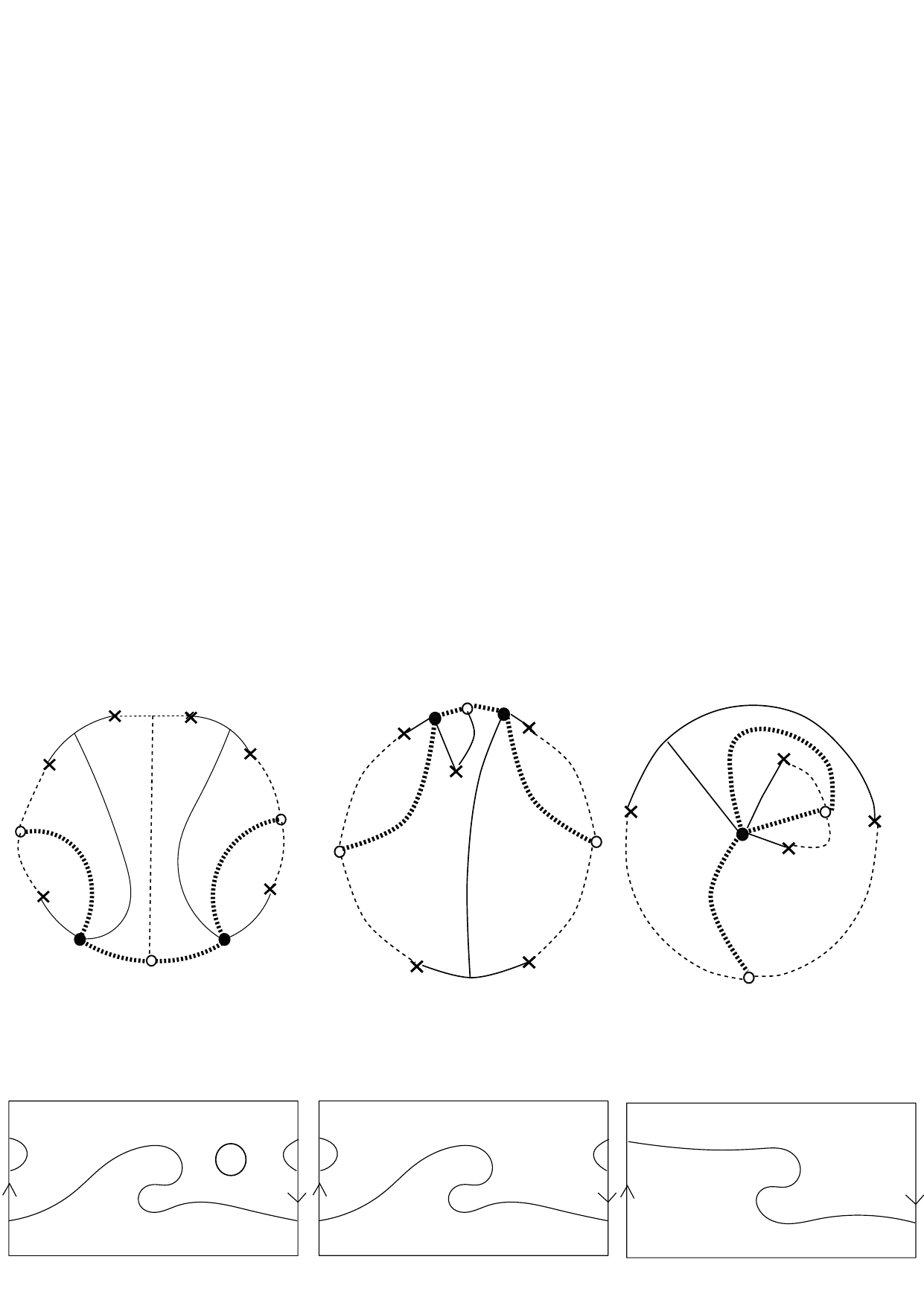}}
\put(-42,-12){$d)$}
\put(-17,-12){$\mathbb{R}\Sigma_1$}
\put(45,-12){$e)$}
\put(70,-12){$\mathbb{R}\Sigma_1$}
\put(130,-12){$f)$}
\put(155,-12){$\mathbb{R}\Sigma_1$}
\put(-42,65){$a)$}
\put(45,65){$b)$}
\put(130,65){$c)$}
\end{picture}
\end{center}
\caption{Cubic trigonal graphs.}
\label{fig: cubic}
\end{figure} 
We call \textit{cubic trigonal graph of type I} (resp. \textit{type II}) a real trigonal graph of degree $1$ and type I (resp. type II). The graph in Fig. \ref{fig: cubic} $a)$ is a cubic trigonal graph of type I, it has a unique type I labeling and associated trigonal $\mathcal{L}$-scheme on $\mathbb{R}\Sigma_1$ as depicted in Fig. \ref{fig: cubic} $d)$. While the graphs depicted in  Fig. \ref{fig: cubic} $b)$ and $c)$ are of type II and have associated trigonal $\mathcal{L}$-schemes on $\mathbb{R}\Sigma_1$ as depicted respectively in Fig. \ref{fig: cubic} $e)$ and $f)$.\\
Let $\Gamma_1$ (resp. $\Gamma_2$) be a real trigonal graph. Denote by $D_1$ (resp. $ D_2$) the disk on which one of the two symmetric halves of $\Gamma_1$ (resp. $ \Gamma_2$) lies. Consider the disjoint union $\Gamma_1 \sqcup \Gamma_2 \subset D_1 \sqcup D_2$. Let $I_i \subset D_i$, $i=1,2$, be a segment in $\mathbb{R}P^1$ whose endpoints are not vertices of $\Gamma_i$ and such that $I_i$ contains a single $\circ$-vertex or a monochrome dashed vertex $\dashedtop$. Let $\phi: I_1 \rightarrow I_2$ be an isomorphism preserving orientation, i.e. $\Gamma_1 \cap I_1\rightarrow \Gamma_2 \cap I_2$ is an isomorphism preserving the types of vertices and edges, and preserving orientation. Consider the quotient $D_1 \sqcup_{\phi} D_2=D_1 \sqcup D_2/(x \sim \phi(x))$ and $\Gamma_{\phi}\subset D_1 \sqcup_{\phi} D_2$ the quotient of the image of $\Gamma_1 \sqcup \Gamma_2$. We call such operation \textit{gluing}. The gluing of real trigonal graphs is still a real trigonal graph (see \cite[Section 5.6]{DegIteKha08} for details).\\
\begin{figure}[t!]
\begin{center}
\begin{picture}(100,70)
\put(-85,59){$3$} 
\put(-85,48){$3$} 
\put(-100,40){$2$} 
\put(-20,40){$2$}
 \put(-65,20){$3$}
 \put(-85,12){$3$}
\put(-10,31){$2$}
\put(-50,30){$2$}
\put(-1,22){$3$}
\put(9,17){$3$} 
\put(12,48){$3$} 
\put(11,60){$3$}     
       \put(-40,10){$a)$}
 
    \put(140,10){$b)$}
         
\put(-110,0){\includegraphics[width=0.9\textwidth]{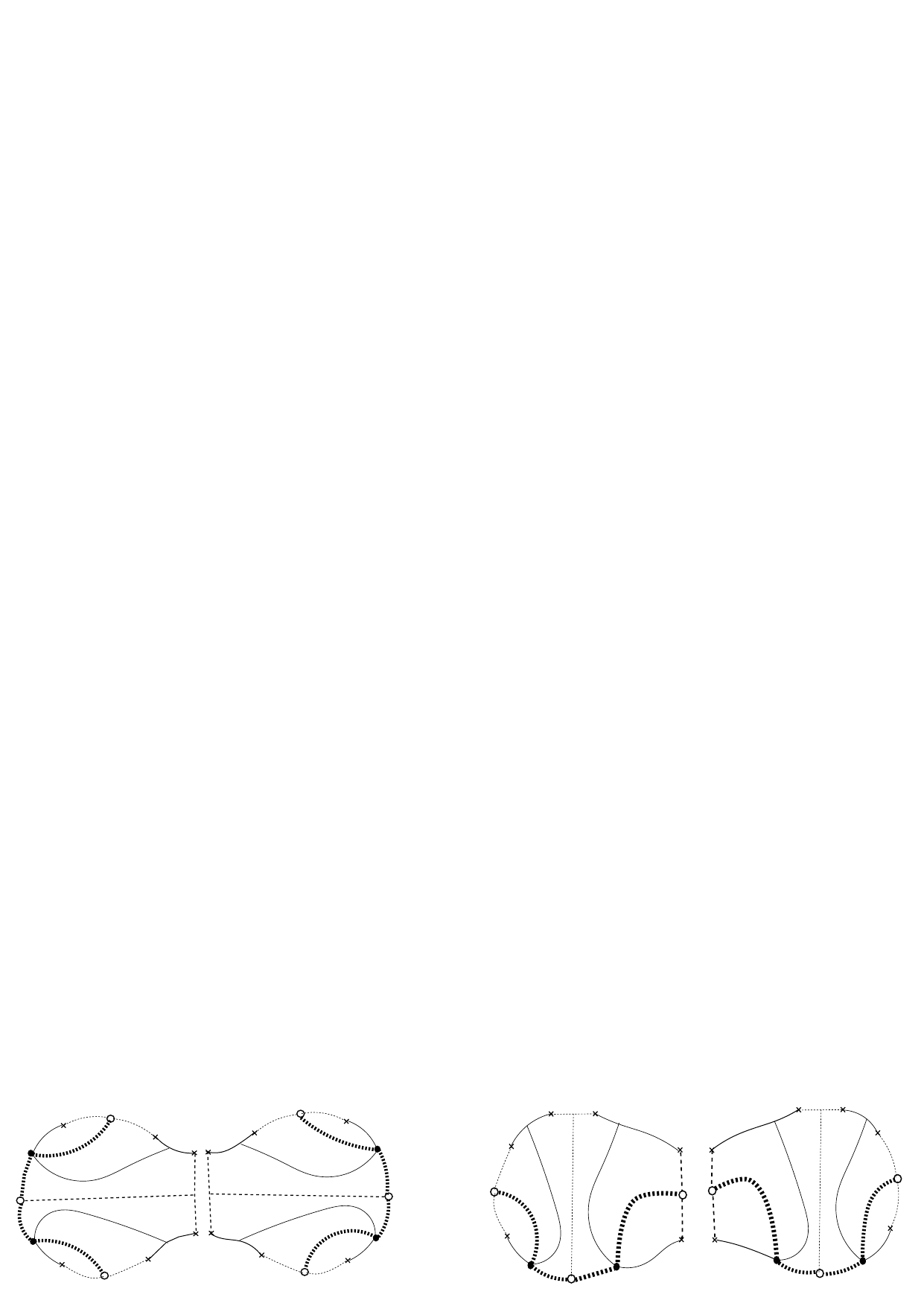}}
\end{picture}
\end{center}
\caption{How gluing two cubic trigonal graphs.}
\label{fig: standard gluing example}
\end{figure} 
\begin{figure}[t!]
\begin{center}
\begin{picture}(100,210)
      \put(-20,110){$a)$}
\put(-85,190){$3$} 
\put(-85,170){$3$} 
\put(-90,160){$2$} 
\put(0,175){$2$}
\put(-60,130){$3$}
\put(-90,128){$3$}
\put(0,140){$2$}
\put(-40,140){$2$}
\put(20,130){$3$}
\put(40,130){$3$} 
\put(40,178){$3$} 
\put(40,192){$3$}     
     \put(130,110){$b)$}
      
         \put(180,120){$\mathbb{R}\Sigma_2$}
  \put(-20,10){$c)$}

    \put(130,10){$d)$}
          \put(180,20){$\mathbb{R}\Sigma_2$}
\put(-110,0){\includegraphics[width=0.9\textwidth]{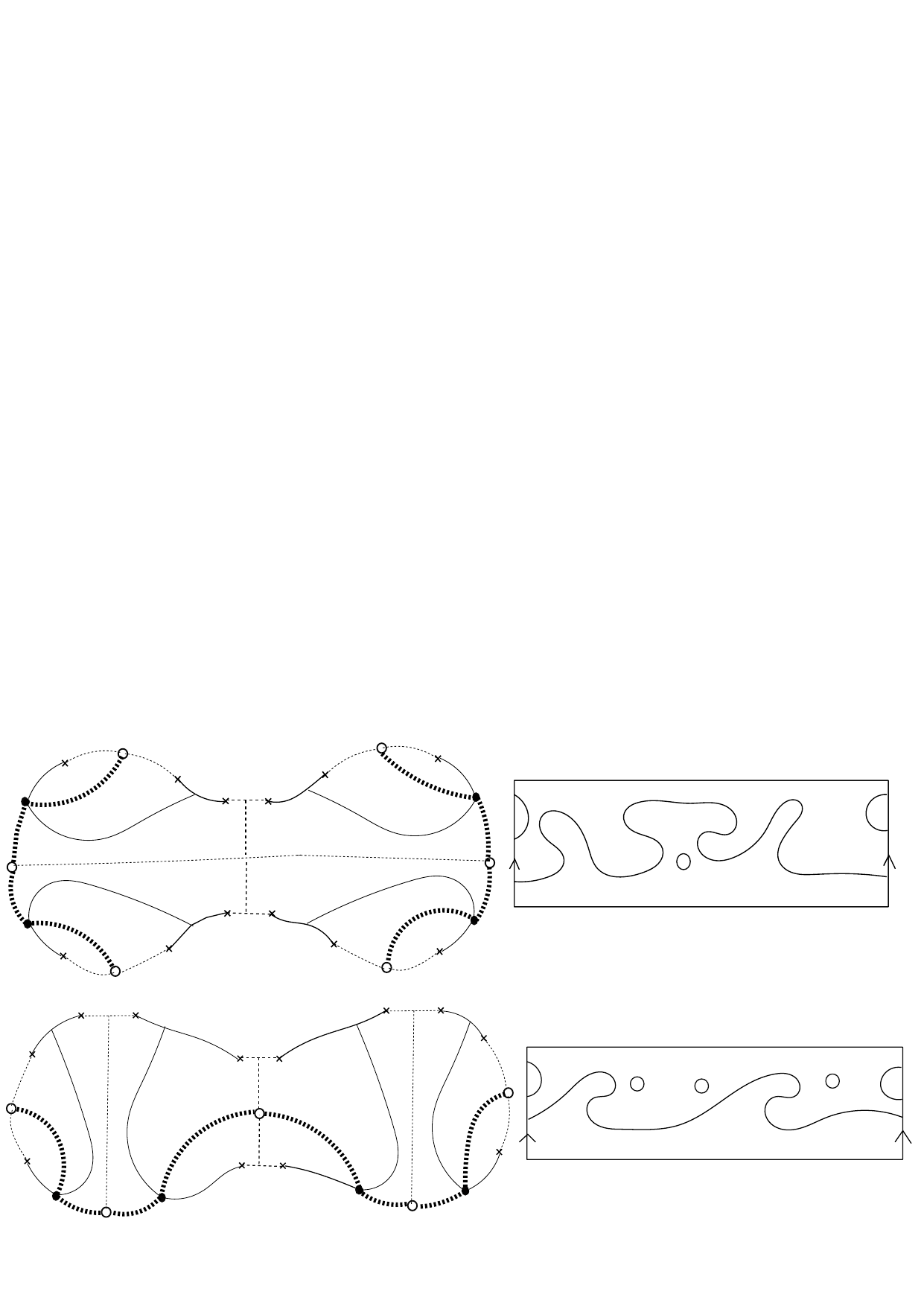}}
\end{picture}
\end{center}
\caption{Gluing of two cubic trigonal graphs and associated trigonal $\mathcal{L}$-scheme.}
\label{fig: 2standard trigonal example}
\end{figure} 
Up to $\mathcal{L}$-isotopy, there is a finite number of possibilities to glue $n$ cubic trigonal graphs into a real trigonal graph of degree $n$. We do not know whether any given trigonal graph of degree $n$ is equivalent to the result of gluing of a union of $n$ cubic trigonal graphs (\cite{Orev03} \cite{DegIteKha08}).\\
Gluing type I real trigonal graphs, which are glued to each other along vertices whose neighboring connected components have the same labels, we get a type I real trigonal graph. As example, look at the gluing of two cubic trigonal graphs of type I in $a)$, resp. $b)$, of Fig. \ref{fig: standard gluing example} and $a)$, resp. $c)$ of Fig. \ref{fig: 2standard trigonal example}. The obtained graphs are real trigonal graphs of degree $2$ and type I. The respective associated trigonal $\mathcal{L}$-schemes are depicted in $b)$ and $d)$ of Fig. \ref{fig: 2standard trigonal example}.  
\section{Construction}
\label{sec: construction}
\subsection{Trigonal construction}
\label{subsec: trigonal_construction}
\begin{figure}[h!]
\begin{center}
\begin{picture}(100,40)
\put(100,-20){\textcolor{black}{$\mathbb{R}\Sigma_5$}}
\put(-50,-30){\includegraphics[width=0.50\textwidth]{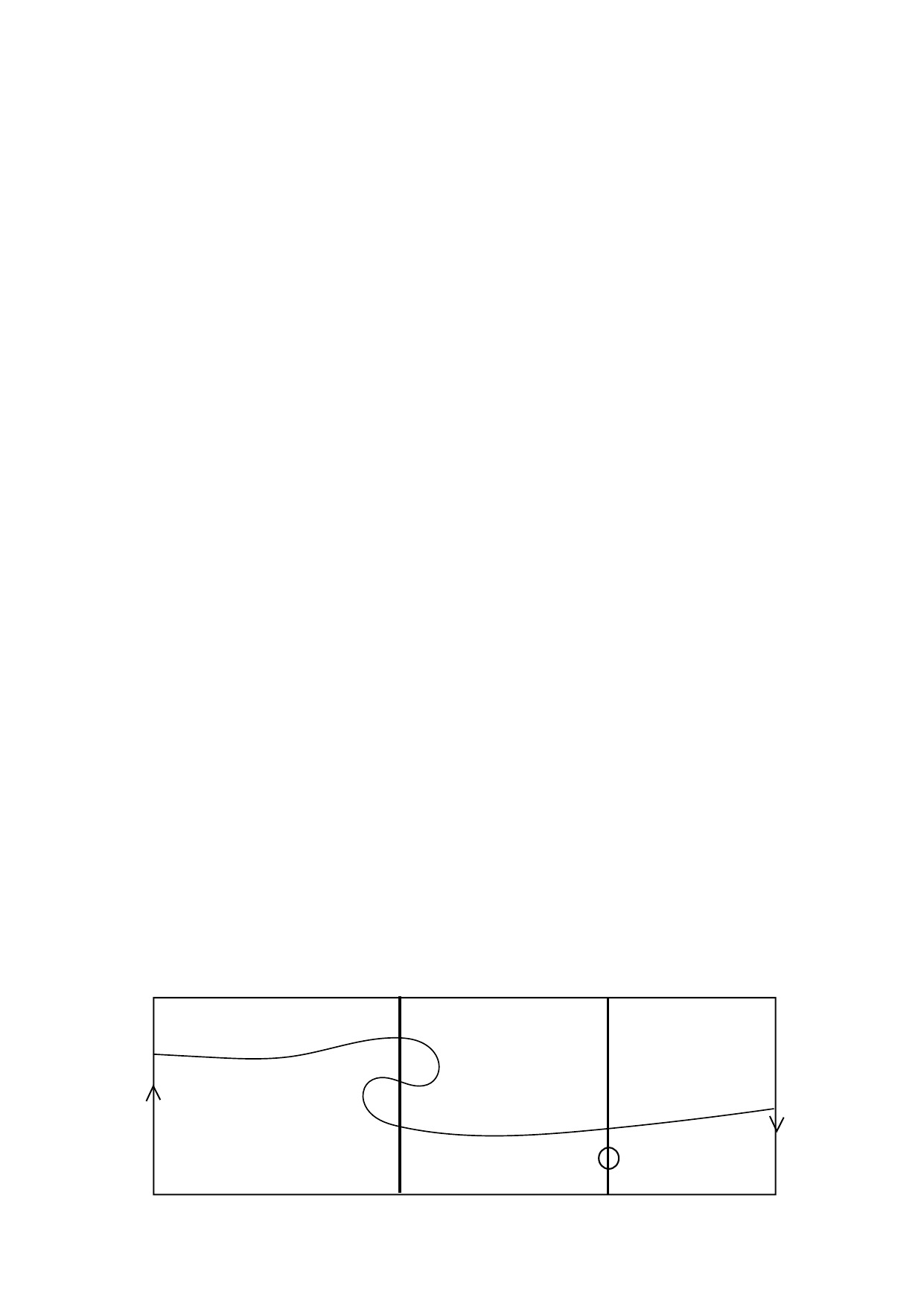}} 
\put(43,15){$a$}
\put(79,15){$b$}
\put(56,-6){$c$}
\put(96,-6){$c'$}
\end{picture}
\end{center}
\caption{The union of a trigonal $\mathcal{L}$-scheme and two fibers of $\mathcal{L}$ on $\mathbb{R}\Sigma_5$: $\eta_{a,b,c,c'}$.}
\label{fig: after_snodato_186_sigma_5}
\end{figure}
In this section we give some intermediate constructions of real algebraic curves that we will need later on.
\begin{prop}
\label{prop: esistenza_in_sigma_5}
Let $\eta_{a,b,c,c'}$ be, up to isotopy of $\mathbb{R}\Sigma_5$, the union of a trigonal $\mathcal{L}$-scheme with two fibers of $\mathcal{L}$ on $\mathbb{R}\Sigma_5$ as depicted in Fig. \ref{fig: after_snodato_186_sigma_5}, where letters $a,b,c,c'$ denote numbers of ovals. Let $h,j$ and $t$ be non-negative integer numbers. Then, there exist real algebraic trigonal curves in $\Sigma_5$ realizing the real schemes $\eta_{a,b,c,c'}$ for all $a,b,c,c'$ such that $0 \leq c+c' \leq h, 0 \leq a \leq j$ and $ 0 \leq b \leq t$, where $h,j$ and $t$ are as follows:
\begin{enumerate}
\item $j+h+t=12$ with 
\begin{itemize}
 \item $h=1$ and $j \in \{0,1,4,7,10,11\}$,
\item $h=5$ and $j\in \{0,1,2,3,4,5,6,7\}$,
\item $h=9$ and $j\in \{0,1,2,3\}$.
\end{itemize}
\item $j+h+t=10$ with
\begin{itemize}
\item $h=0$ and $j\in \{4,6,8\}$,
\item $h=2$ and $j\in \{0,1,2,4,5,6,8\}$,
\item $h=4$ and $j\in \{0,1,2,4,5,6\}$,
\item $h=6$ and $j\in \{0,1,2,4\}$,
\item $h=8$ and $j\in \{0,1,2\}$.
\end{itemize}

\item $j+h+t=8$ with
\begin{itemize}
\item $h=1$ and $j=3$,
\item $h=3$ and $j\in \{1,2,5\}$.
\end{itemize}
\end{enumerate}
Furthermore, the real trigonal curves with $c+c'= h, a = j$ and $ b = t$, are of type I.
Also, there exist real trigonal curves of type I in $\Sigma_5$ realizing $\eta_{a,b,c,c'}$ for $(a,b,c,c')=(1,5,0,0)$ and $(3,3,0,0)$.
\end{prop}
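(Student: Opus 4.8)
The plan is to realize the desired $\mathcal{L}$-schemes on $\mathbb{R}\Sigma_5$ by the gluing construction for real trigonal graphs described in Section~\ref{subsubsec: gluing cubics}, together with Theorems~\ref{thm: existence_trigonal} and~\ref{thm: existence_trigonal_type_I}. First I would translate each target configuration $\eta_{a,b,c,c'}$ into its real graph $\overline{\Gamma}$ on $\mathbb{R}P^1$ following Definition~\ref{defn: completable_graph}, and then exhibit an explicit completion $\Gamma$ of degree $5$ obtained by gluing five cubic trigonal graphs (of type I or type II, chosen from Fig.~\ref{fig: cubic}) along $\circ$-vertices or monochrome dashed vertices, as in Fig.~\ref{fig: standard gluing example} and Fig.~\ref{fig: 2standard trigonal example}. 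Since gluing preserves the property of being a real trigonal graph, and gluing type~I graphs along vertices with matching labels yields a type~I graph, the burden is combinatorial: one must check that for each admissible triple $(h,j,t)$ in the three lists there is a choice of cubic building blocks whose gluing produces a graph whose associated $\mathcal{L}$-scheme is $\eta_{a,b,c,c'}$ for the full range $0\le c+c'\le h$, $0\le a\le j$, $0\le b\le t$. The ranges are handled by noting that suppressing ovals (pushing pairs of consecutive $\times$-vertices together and smoothing) corresponds to elementary modifications of $\overline{\Gamma}$ that remain completable, so it suffices to realize the extremal case $c+c'=h$, $a=j$, $b=t$ and then degenerate; by Remark~\ref{rem: hyperbolic_type I} (or by the explicit type~I labeling) the extremal curves come out of type~I.

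Concretely, each of the three families corresponds to a different way the "hyperbolic part" of the trigonal curve (contributing the nest-like ovals counted by $c,c'$ near the two marked fibers) is distributed relative to the oval chains counted by $a$ and $b$. For family (1), with $j+h+t=12$: the total "budget" $12=12n/n$ with $n=5$ reflects condition~2 of Definition~\ref{defn: complete_graph} on the $12n$ essential-vertex count distributed over the five cubic pieces; each cubic of type~II from Fig.~\ref{fig: cubic}~$e)$ or~$f)$ contributes a fixed block of ovals, and the three sub-cases $h\in\{1,5,9\}$ correspond to using $1$, $3$, or $5$ "hyperbolic" cubic blocks respectively (each such block contributing potential nesting depth, hence the arithmetic progression $1,5,9$), with the admissible $j$-values being exactly those reachable by the remaining non-hyperbolic blocks. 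Families (2) and (3) are analogous but start from graphs of smaller "effective degree" after absorbing one or two fibers, which is why the totals drop to $10$ and $8$; the lists of admissible $j$ in each sub-case are precisely the integers expressible as sums of the oval-counts of the available cubic building blocks under the constraint that the leftover goes into $t=12\text{-}(\text{or }10,8)-h-j$ copies of the "solid chain" block.

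The main obstacle I expect is bookkeeping rather than conceptual: verifying, case by case, that the gluings actually produce the claimed $\mathcal{L}$-scheme (in particular that the two distinguished fibers sit correctly relative to the ovals, so that the counts split as $c$ versus $c'$ and not merely as $c+c'$), and that the type~I labelings of the glued pieces are compatible across every glued vertex so that Theorem~\ref{thm: existence_trigonal_type_I} applies to give type~I in the extremal cases. For the two final assertions, $(a,b,c,c')=(1,5,0,0)$ and $(3,3,0,0)$, these are edge cases with no hyperbolic part ($h=0$) and hence fall outside the three families above; I would handle them by a direct gluing of five type~I cubics arranged so that no $\times$-vertex lies on $\mathbb{R}P^1$ in the "$c,c'$" region, checking the unique type~I labeling exists via Remark~\ref{rem: labeling_just_one_number}. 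A secondary subtlety is that Theorem~\ref{thm: existence_trigonal_type_I} is stated for non-hyperbolic $\mathcal{L}$-schemes, so one must confirm that the relevant extremal $\eta_{a,b,c,c'}$ are indeed non-hyperbolic (they intersect some fiber in one real point, which holds as long as the configuration is not a pure nest), or else invoke Remark~\ref{rem: hyperbolic_type I} in the hyperbolic sub-cases.
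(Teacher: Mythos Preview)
Your overall strategy matches the paper's: reduce to Theorems~\ref{thm: existence_trigonal} and~\ref{thm: existence_trigonal_type_I}, and build degree-$5$ completions by gluing five cubic trigonal graphs as in Section~\ref{subsubsec: gluing cubics}. Two points in your combinatorics, however, would not survive the bookkeeping.

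First, the totals $j+h+t\in\{12,10,8\}$ do not come from ``smaller effective degree after absorbing one or two fibers''. Every construction in items 1.--3.\ is a genuine degree-$5$ real trigonal graph on $\Sigma_5$, assembled from exactly five cubics. The three totals arise because the cubic building blocks of Fig.~\ref{fig: cubic} carry different numbers of ovals (the type~I block $a)$ versus the type~II blocks $b),c)$), and different mixes yield different oval counts. The paper handles the full range in one stroke: gluing cubics so that the result is type~I gives the extremal case $c+c'=h$, $a=j$, $b=t$, while swapping in type~II cubics gives the sub-extremal cases directly as type~II graphs. Your proposal to realize only the extremal case and then ``suppress ovals by elementary modifications that remain completable'' is a different route and would require a separate argument that completability (and hence algebraic realizability) is preserved under those modifications; this is not automatic and is not what the paper does.

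Second, for $(a,b,c,c')=(1,5,0,0)$ and $(3,3,0,0)$ the paper does not glue standard cubics at all: it exhibits two explicit degree-$5$ type~I real trigonal graphs (Fig.~\ref{fig: 186_a=3_b=3_and_a=1_b=5}) and checks the type~I labeling via Remark~\ref{rem: labeling_just_one_number}. Since these parameters have $a+b+c+c'=6$, they lie below all three families, and a gluing of five type~I cubic blocks necessarily produces too many ovals; your plan to obtain them by gluing five type~I cubics cannot work, and an ad~hoc graph is genuinely required.
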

\begin{figure} [h!]
\begin{center}
\begin{picture}(200,150)
\put(-54,-5){\includegraphics[width=0.80\textwidth]{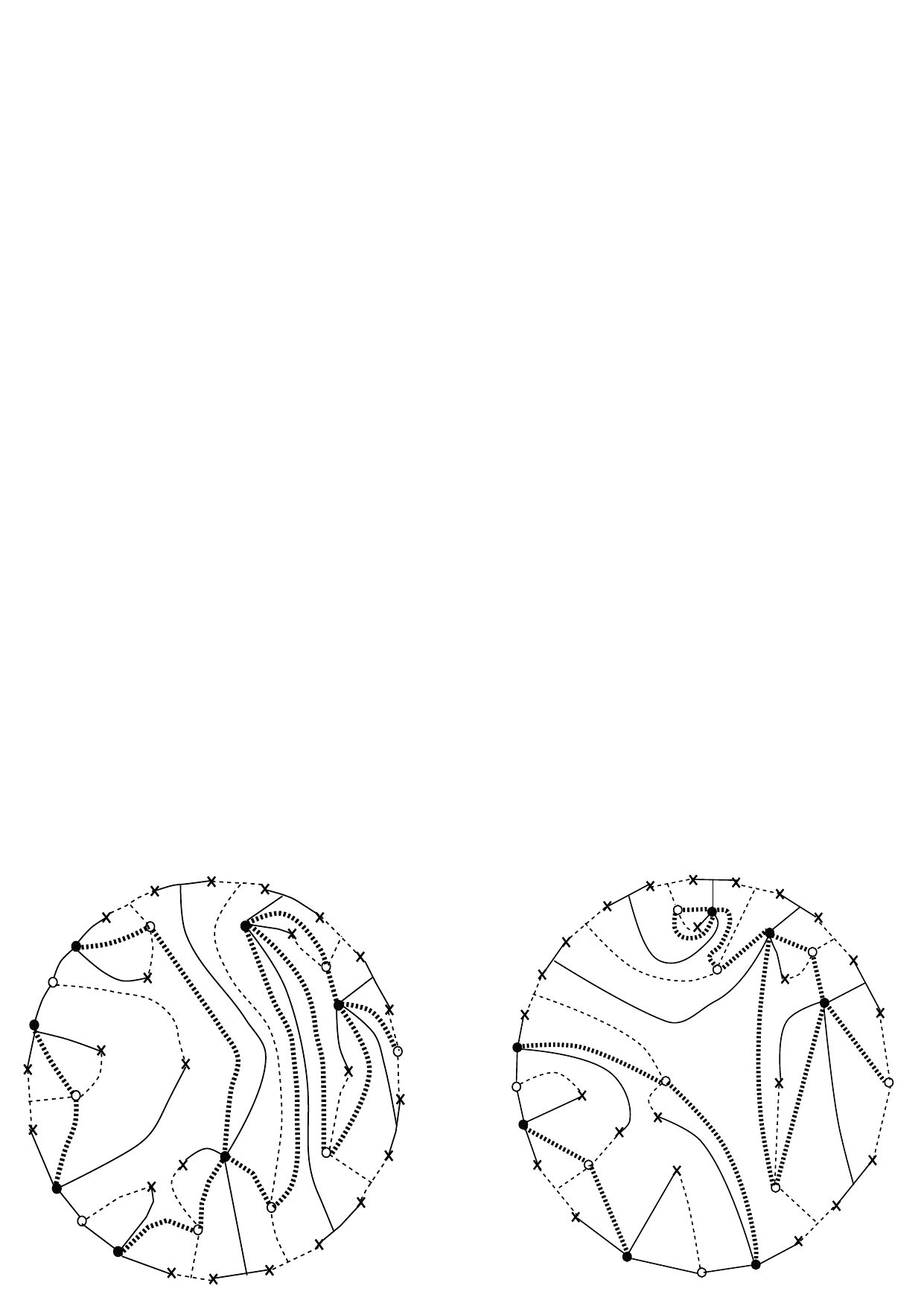}} 
\put(10,70){$1$}
\put(10,-20){$a)$}
\put(170,70){$2$}
\put(170,-20){$b)$}
\end{picture}
\end{center}
\caption{Real trigonal graph of degree $5$ and type I.}
\label{fig: 186_a=3_b=3_and_a=1_b=5}
\end{figure} 
\begin{proof}
Thanks to Theorems \ref{thm: existence_trigonal}, \ref{thm: existence_trigonal_type_I},  if the real graphs associated to $\eta_{a,b,c,c'}$ are completable in degree $5$ to a real trigonal graph of type I (resp. II), then there exist real algebraic trigonal curves of type I (resp. type II) realizing $\eta_{a,b,c,c'}$.\\ 
We can glue $5$ cubic trigonal graphs (see Section \ref{subsubsec: gluing cubics}) in such a way that 
we obtain type I (resp. type II) real trigonal graphs of degree $5$, which complete the real graph associated to $\eta_{a,b,c,c'}$ where $a,b, c$ and $c'$ are such that $c+c'= h, a = j$ and $ b = t$ (resp. $0\leq c+c'< h, 0\leq a < j$ and $ 0\leq b < t$), for $h,j$ and $t$ as listed in $1.-3.$ above. Finally, a type I completion (see Remark \ref{rem: labeling_just_one_number}) of the real graph associated to $\eta_{a,b,c,c'}$ for $(a,b,c,c')=(3,3,0,0)$, resp.$(a,b,c,c') =(1,5,0,0)$, is pictured in Fig. \ref{fig: 186_a=3_b=3_and_a=1_b=5} $a)$, resp. $b)$.
\end{proof}
\begin{figure}[h!]
\begin{center}
\begin{picture}(200,60)
\put(230,-15){\textcolor{black}{$\mathbb{R}\Sigma_6$}}
\put(180,-17){\textcolor{black}{$b)$}}
\put(70,-15){\textcolor{black}{$\mathbb{R}\Sigma_6$}}
\put(15,-17){\textcolor{black}{$a)$}}
\put(-52,-40){\includegraphics[width=0.80\textwidth]{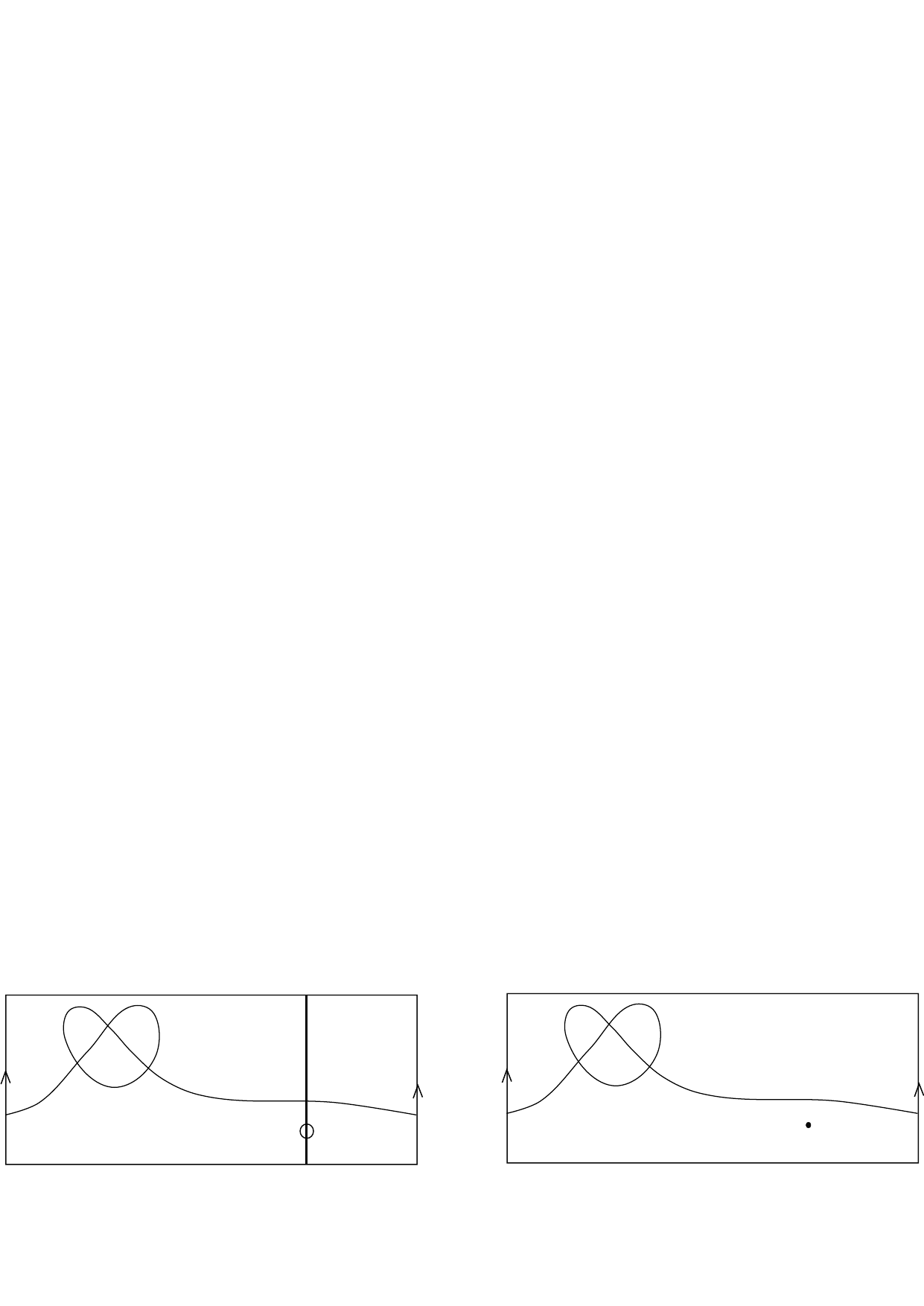}} 
\put(8,45){$a$}
\put(53,45){$b$}
\put(28,10){$c$}
\put(66,10){$c'$}

\put(168,45){$a$}
\put(213,45){$b$}
\put(188,10){$c$}
\put(226,10){$c'$}
\end{picture}
\end{center}
\caption{$a)$ The union of a trigonal $\mathcal{L}$-scheme and a fiber of $\mathcal{L}$ on $\mathbb{R}\Sigma_6$. $b)$ A nodal trigonal $\mathcal{L}$-scheme on $\mathbb{R}\Sigma_6$: $\tilde{\eta}_{a,b,c,c'}$.}
\label{fig: after_snodato_186_gluing_sigma_6}
\end{figure}
\begin{figure} [h!]
\begin{center}
\begin{picture}(200,66)
\put(-14,-5){\includegraphics[width=0.60\textwidth]{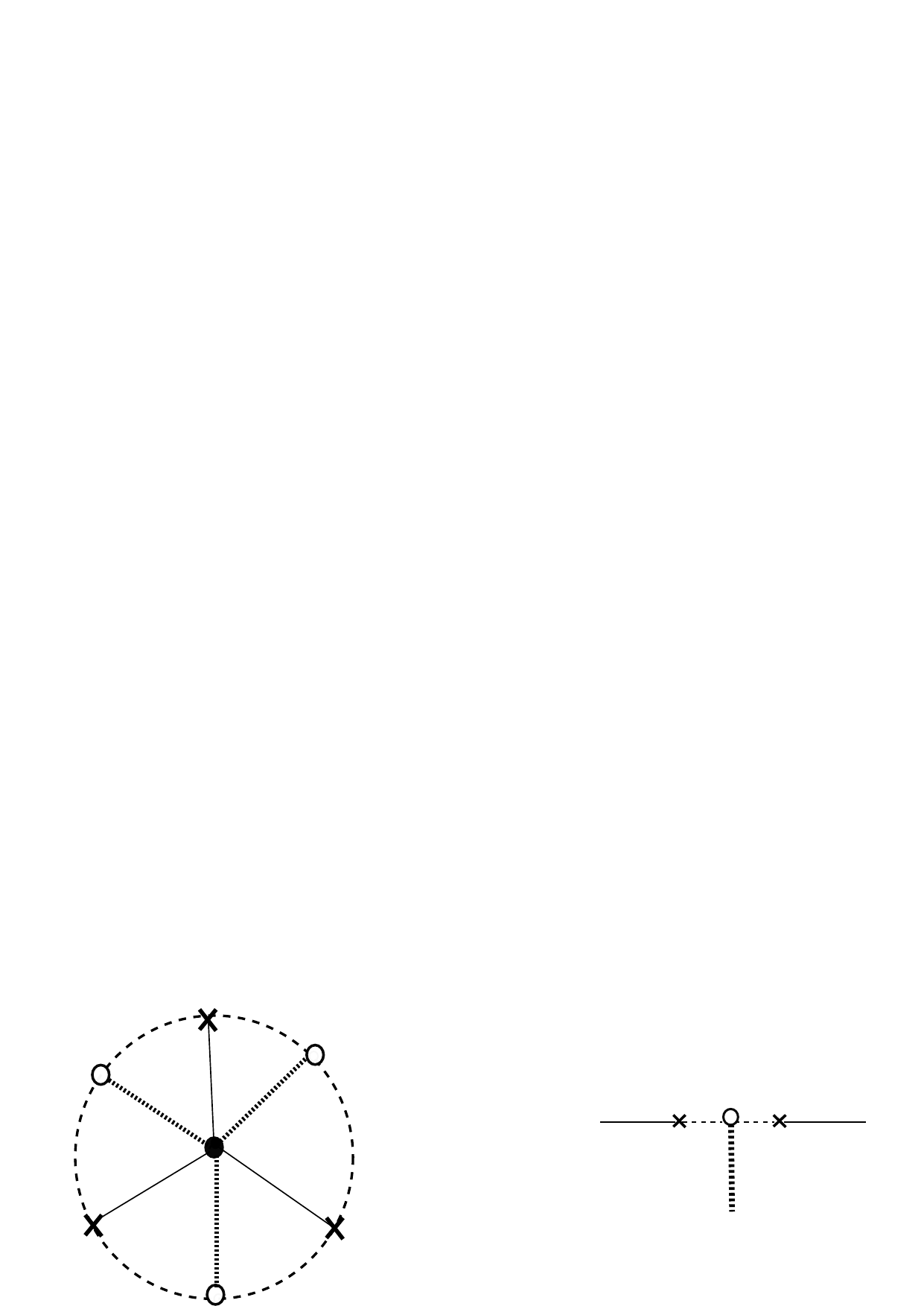}} 
\put(20,5){$3$}
\put(45,5){$3$}
\put(20,45){$1$}
\put(40,45){$1$}
\put(10,25){$2$}
\put(55,25){$2$}
\put(35,-15){$a)$}
\put(163,-15){$b)$}
\put(152,19){$3$}
\put(167,19){$3$}
\end{picture}
\end{center}
\caption{$a)$ Real trigonal graph of degree $1$ and type I: $\xi$ . $b)$ Local type I labeling.}
\label{fig: nodo_sigma_5}
\end{figure} 
\begin{prop}
\label{prop: da sigma_5 a sigma_6}
Let $\tilde{\eta}_{a,b,c,c'}$ be, up to isotopy of $\mathbb{R}\Sigma_6$, a trigonal $\mathcal{L}$-scheme on $\mathbb{R}\Sigma_6$ as depicted in Fig. \ref{fig: after_snodato_186_gluing_sigma_6} $b)$, where letters $a,b,c,c'$ denote numbers of ovals. Then, there exist real algebraic trigonal curves in $\Sigma_6$ realizing the real schemes $\tilde{\eta}_{a,b,c,c'}$ for all $a,b, c,c'$ as listed in Proposition \ref{prop: esistenza_in_sigma_5}.
\end{prop}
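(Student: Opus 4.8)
The strategy is to produce the curves on $\Sigma_6$ by \emph{gluing} (in the sense of Section~\ref{subsubsec: gluing cubics}) the degree-$5$ real trigonal graphs built in the proof of Proposition~\ref{prop: esistenza_in_sigma_5} with the degree-$1$ graph $\xi$ of Fig.~\ref{fig: nodo_sigma_5}~$a)$, and then to invoke Theorem~\ref{thm: existence_trigonal} (together with Theorem~\ref{thm: existence_trigonal_type_I} for the type) in the nodal version recalled just before the latter. So the whole point is that the operation ``$\Sigma_5\rightsquigarrow\Sigma_6$'' is realized, at the level of graphs, by attaching one more cubic block, and that this particular block is the one responsible for the node of Fig.~\ref{fig: after_snodato_186_gluing_sigma_6}~$b)$.

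First I would fix, for each admissible $(a,b,c,c')$ in the list of Proposition~\ref{prop: esistenza_in_sigma_5}, the real trigonal graph $\Gamma=\Gamma_{a,b,c,c'}$ of degree $5$ completing the real graph of $\eta_{a,b,c,c'}$, which is of type I exactly when $c+c'=h$, $a=j$, $b=t$ and of type II otherwise (and, for the two extra cases $(1,5,0,0)$ and $(3,3,0,0)$, the type I graphs of Fig.~\ref{fig: 186_a=3_b=3_and_a=1_b=5}). Next I would exhibit in $\Gamma$ an arc $I_1\subset\mathbb{R}P^1$ carrying a single $\circ$-vertex or a single monochrome dashed vertex $\dashedtop$, and the matching arc $I_2$ in $\xi$; since $\xi$ is itself a real trigonal graph, the general gluing statement of~\cite[Section 5.6]{DegIteKha08} then gives that the glued graph $\Gamma_\phi$ is again a real trigonal graph, now of degree $5+1=6$. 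When $\Gamma$ is of type I one chooses the gluing arc so that the labels of the components adjacent to $I_1$ agree with the type I labeling of $\xi$ recorded in Fig.~\ref{fig: nodo_sigma_5}~$b)$, so that $\Gamma_\phi$ is again of type I; otherwise it is of type II.

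It then remains to read off the trigonal $\mathcal{L}$-scheme associated to $\Gamma_\phi$ via the dictionary of Definition~\ref{defn: completable_graph} (Fig.~\ref{fig: real graph_translate}). The part of $\Gamma_\phi$ coming from $\Gamma$ reproduces, on $\mathbb{R}\Sigma_6$, the trigonal curve together with a single auxiliary fibre as in Fig.~\ref{fig: after_snodato_186_gluing_sigma_6}~$a)$, while the $\xi$-block inserts precisely the local feature of Fig.~\ref{fig: after_snodato_186_gluing_sigma_6}~$b)$, namely a node, the remaining ovals $a,b,c,c'$ being left in place; hence the $\mathcal{L}$-scheme of $\Gamma_\phi$ is $\tilde\eta_{a,b,c,c'}$. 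Applying Theorem~\ref{thm: existence_trigonal} in its extension to nodal trigonal curves (and Theorem~\ref{thm: existence_trigonal_type_I} for the type) then yields a real algebraic nodal trigonal curve in $\Sigma_6$ realizing $\tilde\eta_{a,b,c,c'}$, of type I when $c+c'=h,\ a=j,\ b=t$ and of type II otherwise, which is the claim.

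The step I expect to be the main obstacle is the bookkeeping in the middle: one must check that \emph{every} degree-$5$ graph produced in Proposition~\ref{prop: esistenza_in_sigma_5} does contain along $\mathbb{R}P^1$ a $\circ$-vertex or a monochrome dashed vertex whose neighbourhood is compatible (colours, vertex types, orientations, and in the type I case labels) with an arc of $\xi$, so that the gluing is admissible; and then, more delicately, that translating $\Gamma_\phi$ back through Fig.~\ref{fig: real graph_translate} really produces the node of Fig.~\ref{fig: after_snodato_186_gluing_sigma_6}~$b)$ in the claimed position relative to the ovals counted by $a,b,c,c'$, rather than some other local picture (a tangency with a fibre, an extra oval, or a node sitting in the wrong region). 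Once this local analysis is carried out for each graph, everything else is a direct application of the theorems cited above.
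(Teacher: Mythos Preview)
Your proposal is correct and follows essentially the same route as the paper: glue the degree-$5$ graph $\Gamma$ from Proposition~\ref{prop: esistenza_in_sigma_5} with the cubic graph $\xi$ of Fig.~\ref{fig: nodo_sigma_5}~$a)$ and invoke Theorems~\ref{thm: existence_trigonal} and~\ref{thm: existence_trigonal_type_I} (in their nodal extension, via the equivalence of Fig.~\ref{fig: after_snodato_186_gluing_sigma_6}~$a)$ and~$b)$ from~\cite{Orev03}). The paper resolves your ``main obstacle'' by observing that the required $\circ$-vertex is canonically present in every such $\Gamma$: it is the $\circ$-vertex of the sub-graph $\delta$ (Fig.~\ref{fig: nodo_sigma_5}~$b)$) sitting at the position of one of the two marked fibres of $\eta_{a,b,c,c'}$, so no case-by-case check is needed.
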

\begin{proof}
Thanks to Theorems \ref{thm: existence_trigonal}, \ref{thm: existence_trigonal_type_I},  if the real graphs associated to $\tilde{\eta}_{a,b,c,c'}$ are completable in degree $6$  to a real trigonal graph of type I (resp. II), then there exist real algebraic trigonal curves of type I (resp. type II) realizing $\tilde{\eta}_{a,b,c,c'}$.\\
For $a,b, c,c'$ as listed in Proposition \ref{prop: esistenza_in_sigma_5}, the existence of real trigonal graphs of degree $6$ and type I (resp. type II) completing the real graph associated to $\tilde{\eta}_{a,b,c,c'}$, is equivalent to the existence of those of type I (resp. type II) associated to the $\mathcal{L}$-scheme depicted in Fig. \ref{fig: after_snodato_186_gluing_sigma_6} $a)$, see \cite{Orev03}.\\
Let $\xi$ be the cubic trigonal graph of type I pictured in Fig. \ref{fig: nodo_sigma_5} $a)$. Take any real trigonal graph $\Gamma$ of degree $5$ constructed in the proof of Proposition \ref{prop: esistenza_in_sigma_5}, realizing a trigonal $\mathcal{L}$-scheme $\eta_{a,b,c,c'}$. In a neighborhood of $\Gamma \cap \mathbb{R}P^1$, let us denote by $\delta$ the sub-graph of $\Gamma$ which is as depicted in Fig. \ref{fig: nodo_sigma_5} $b)$ and whose associated $\mathcal{L}$-scheme is the part of $\eta_{a,b,c,c' }$ for which passes one fixed fiber of $\mathcal{L}$ (see Fig. \ref{fig: after_snodato_186_sigma_5}). 
Glue $\Gamma$ along the $\circ$-vertex of $\delta$ to the $\circ$-vertex, with same labeling if $\Gamma$ is of type I, of the cubic trigonal graph $\xi$. The gluing is a real trigonal graph of degree $6$ which completes the real graph associated to the union of a trigonal $\mathcal{L}$-scheme with one fiber of $\mathcal{L}$ as depicted in Fig. \ref{fig: after_snodato_186_gluing_sigma_6} $a)$ on $\mathbb{R}\Sigma_6$, for all $a,b, c,c'$ as in Proposition \ref{prop: esistenza_in_sigma_5}. Besides, the gluing is of type I (resp. of type II) for all $a,b,c,c'$ for which $\Gamma$ is of type I (resp. type II).
\end{proof}
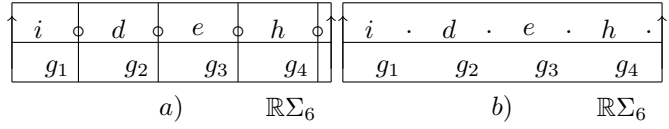
\begin{figure} [h!]
\begin{center}
\begin{tikzpicture} [scale=0.35]
\draw (0.5,0) -- (12.5,0);
\draw [->] (0.5,-1) -- (0.5,1);
\draw (0.5,-1.5) -- (0.5,1.5);
\draw [->] (12.5,-1) -- (12.5,1);
\draw (12.5,-1.5) -- (12.5,1.5);
\draw (0.5,1.5) -- (12.5,1.5);
\draw (0.5,-1.5) -- (12.5,-1.5);
\node at (1.5,0.5) {$i$};
\draw (3,-1.5) -- (3,1.5);
\node at (3,0.4) {$\circ$};
\node at (4.5,0.5) {$d$};
\draw (6,-1.5) -- (6,1.5);
\node at (6,0.4) {$\circ$};
\node at (7.5,0.5) {$e$};
\draw (9,-1.5) -- (9,1.5);
\node at (9,0.4) {$\circ$};
\node at (10.5,0.5) {$h$};
\draw (12,-1.5) -- (12,1.5);
\node at (12,0.4) {$\circ$};
\node at (2.2,-1) {$g_1$};
\node at (5.2,-1) {$g_2$};
\node at (8.2,-1) {$g_3$};
\node at (11.2,-1) {$g_4$};
\node at (11,-2.5) {$\mathbb{R}\Sigma_6$};
\node at (6.5,-2.5) {$a)$};
\end{tikzpicture}
\begin{tikzpicture} [scale=0.35]
\draw (0.5,0) -- (12.5,0);
\draw [->] (0.5,-1) -- (0.5,1);
\draw (0.5,-1.5) -- (0.5,1.5);
\draw [->] (12.5,-1) -- (12.5,1);
\draw (12.5,-1.5) -- (12.5,1.5);
\draw (0.5,1.5) -- (12.5,1.5);
\draw (0.5,-1.5) -- (12.5,-1.5);
\node at (1.5,0.5) {$i$};
\node at (3,0.4) {$.$};
\node at (4.5,0.5) {$d$};
\node at (6,0.4) {$.$};
\node at (7.5,0.5) {$e$};
\node at (9,0.4) {$.$};
\node at (10.5,0.5) {$h$};
\node at (12,0.4) {$.$};
\node at (2.2,-1) {$g_1$};
\node at (5.2,-1) {$g_2$};
\node at (8.2,-1) {$g_3$};
\node at (11.2,-1) {$g_4$};
\node at (11,-2.5) {$\mathbb{R}\Sigma_6$};
\node at (6.5,-2.5) {$b)$};
\end{tikzpicture}
\end{center}
\caption{$a)$ The union of a trigonal $\mathcal{L}$-scheme and four fibers of $\mathcal{L}$ on $\mathbb{R}\Sigma_6$: $\tilde{\eta}_{i,d,e,h,g}$. $b)$ A nodal trigonal $\mathcal{L}$-scheme on $\mathbb{R}\Sigma_6$: $\eta_{i,d,e,h,g}$.}
\label{fig: secondo_disegno_con_ovali}
\end{figure} 
\begin{prop}
\label{prop: secondo_disegno}
Let $\eta_{i,d,e,h,g}$ be, up to isotopy of $\mathbb{R}\Sigma_6$, the trigonal $\mathcal{L}$-scheme on $\mathbb{R}\Sigma_6$ depicted in Fig. \ref{fig: secondo_disegno_con_ovali} $b)$, where letters $i,d,e,h,g_j$, for $j=1,2,3,4$, denote numbers of ovals. Let $g$ be $\sum_{j=1}^{4}g_j$ and let $s,k$ be non-negative integer numbers. Then, there exist real algebraic trigonal curves in $\Sigma_6$ realizing the real schemes $\eta_{i,d,e,h,g}$ for all $i,d,e,h,g$ such that $0 \leq g \leq s, 0 \leq i+d+e+h \leq k$, where $s,k$ are as follows:
\begin{enumerate}
\item $s+k=12$ with
$s \in \{6,10\}$, 
\item $s+k=10$ with
$s \in \{5,9\}$,
\item $s+k=8$ with
$s \in \{0,4,6,8\}$,
\end{enumerate}
Furthermore, the real trigonal curves with $g= s$ and $ i+d+e+h = k$, are of type I. Also, there exist real algebraic trigonal curves of type I in $\Sigma_6$ realizing $\eta_{i,d,e,h,g}$ for 
\begin{enumerate}
\setcounter{enumi}{3}
\item $i+d+e+h+g=8$ with
$g=0$;
\item $i+d+e+h+g=6$ with
$g \in \{1,3,5\}$,
\item  $i+d+e+h+g=4$ with
$g \in \{2,4\}$.
\end{enumerate}
\end{prop}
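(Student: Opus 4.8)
The plan is to deduce Proposition~\ref{prop: secondo_disegno} from Proposition~\ref{prop: da sigma_5 a sigma_6} by one more gluing of cubic trigonal graphs, exactly in the spirit of the proof of Proposition~\ref{prop: da sigma_5 a sigma_6}. By Theorems~\ref{thm: existence_trigonal} and~\ref{thm: existence_trigonal_type_I}, it suffices to exhibit, for each admissible tuple $(i,d,e,h,g)$, a real trigonal graph of degree $6$ completing the real graph associated to $\eta_{i,d,e,h,g}$, and to check that this completion is of type I precisely when $g=s$ and $i+d+e+h=k$. Note that by~\cite{Orev03} (as used already in the proof of Proposition~\ref{prop: da sigma_5 a sigma_6}) the completability of the real graph of the nodal $\mathcal{L}$-scheme $\eta_{i,d,e,h,g}$ in Fig.~\ref{fig: secondo_disegno_con_ovali}~$b)$ is equivalent to that of the real graph of the honest trigonal $\mathcal{L}$-scheme $\tilde\eta_{i,d,e,h,g}$ in Fig.~\ref{fig: secondo_disegno_con_ovali}~$a)$, obtained by smoothing the four nodes on $\mathbb{R}E_6$; so I work with $\tilde\eta_{i,d,e,h,g}$ throughout.

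The key step is to identify $\tilde\eta_{i,d,e,h,g}$ with a reparametrization of the $\mathcal{L}$-scheme $\tilde\eta_{a,b,c,c'}$ of Proposition~\ref{prop: da sigma_5 a sigma_6} on $\mathbb{R}\Sigma_6$ (Fig.~\ref{fig: after_snodato_186_gluing_sigma_6}~$a)$), with the four blocks of ovals $g_1,g_2,g_3,g_4$ distributed among the ``nested'' regions and the chains $i,d,e,h$ distributed along the base. Concretely: from the degree-$6$ real trigonal graph $\Gamma$ produced in Proposition~\ref{prop: da sigma_5 a sigma_6} realizing $\tilde\eta_{a,b,c,c'}$, one locates in a neighbourhood of $\Gamma\cap\mathbb{R}P^1$ a sub-graph $\delta'$ — of the same shape as the $\xi$-type cubic blocks already used — whose associated local $\mathcal{L}$-scheme is the portion of $\tilde\eta_{a,b,c,c'}$ through which a fixed fiber of $\mathcal{L}$ passes, and one glues further cubic trigonal graphs (of type $\xi$, Fig.~\ref{fig: nodo_sigma_5}~$a)$, and of the type-II graphs of Fig.~\ref{fig: cubic}~$e),f)$) along the relevant $\circ$- or monochrome-dashed vertices. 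Since the gluing of real trigonal graphs is again a real trigonal graph (Section~\ref{subsubsec: gluing cubics}), and since gluing along $\circ$-vertices whose neighbouring components carry matching type~I labels preserves type~I, the resulting graph realizes $\tilde\eta_{i,d,e,h,g}$ with the claimed type. The bookkeeping is purely combinatorial: each of the admissible lists $1.$–$6.$ for $(s,k)$ must be matched, term by term, with the corresponding lists in Proposition~\ref{prop: esistenza_in_sigma_5} and Proposition~\ref{prop: da sigma_5 a sigma_6}, using the fact that the operation of adding one more cubic block raises $n$ by $1$ while redistributing the oval counts $h,j,t$ into the counts $s=g$ and $k=i+d+e+h$.

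For the extra type~I cases $4.$–$6.$ (the tuples with $i+d+e+h+g$ equal to $8,6,4$), the strategy is the same but one appeals to the explicitly drawn type~I completions from the end of Proposition~\ref{prop: esistenza_in_sigma_5} (the graphs of Fig.~\ref{fig: 186_a=3_b=3_and_a=1_b=5}, for $(a,b,c,c')=(3,3,0,0)$ and $(1,5,0,0)$) and glues the cubic block $\xi$ to them along the $\circ$-vertex with matching label, invoking Remark~\ref{rem: labeling_just_one_number} to pin down the labelings. Here the degree goes from $5$ to $6$ and then, if needed, one more $\xi$-gluing is applied to pass through the remaining fibers; in each case one checks that the local labelings around the gluing $\circ$-vertices agree, which is immediate from Fig.~\ref{fig: nodo_sigma_5}~$b)$.

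The main obstacle I expect is not conceptual but organizational: verifying that every single tuple in lists $1.$–$6.$ — including the ``gaps'' in the allowed values of $j$ in Proposition~\ref{prop: esistenza_in_sigma_5} — does correspond, after the gluing that redistributes oval blocks, to an allowed tuple $(i,d,e,h,g)$, and that no spurious tuples are produced. In other words, the hard part will be checking that the combinatorics of ``which $\circ$-vertex receives which cubic block'' exactly reproduces the constraints $0\le g\le s$, $0\le i+d+e+h\le k$ with the prescribed $(s,k)$, together with the type~I/type~II dichotomy at the boundary $g=s$, $i+d+e+h=k$. Once the correspondence with Proposition~\ref{prop: da sigma_5 a sigma_6} (itself built on Proposition~\ref{prop: esistenza_in_sigma_5}) is set up carefully, the realizability and the type statements follow directly from Theorems~\ref{thm: existence_trigonal} and~\ref{thm: existence_trigonal_type_I} together with the gluing lemma of Section~\ref{subsubsec: gluing cubics}.
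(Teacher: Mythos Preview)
Your reduction to $\tilde\eta_{i,d,e,h,g}$ via~\cite{Orev03} and the appeal to Theorems~\ref{thm: existence_trigonal},~\ref{thm: existence_trigonal_type_I} are fine, and this is exactly how the paper begins. But the heart of your argument---deriving the degree-$6$ completions for $\tilde\eta_{i,d,e,h,g}$ from those for $\tilde\eta_{a,b,c,c'}$ produced in Proposition~\ref{prop: da sigma_5 a sigma_6}---does not go through. The two $\mathcal{L}$-schemes are not reparametrizations of one another: $\tilde\eta_{a,b,c,c'}$ (Fig.~\ref{fig: after_snodato_186_gluing_sigma_6}) carries a \emph{single} distinguished fibre (one node in the nodal version), whereas $\tilde\eta_{i,d,e,h,g}$ (Fig.~\ref{fig: secondo_disegno_con_ovali}) carries \emph{four}; their real graphs have different numbers and patterns of $\times$-vertices on $\mathbb{R}P^1$, so no mere redistribution of oval blocks converts one into the other. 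Moreover, the graphs of Proposition~\ref{prop: da sigma_5 a sigma_6} are already of degree~$6$; gluing ``further cubic trigonal graphs'' to them produces graphs of degree $\ge 7$, not completions in degree~$6$. The same objection kills your treatment of items $4.$--$6.$: starting from the degree-$5$ graphs of Fig.~\ref{fig: 186_a=3_b=3_and_a=1_b=5} and gluing one $\xi$ gives degree~$6$, but the resulting $\mathcal{L}$-scheme still has the one-node shape of Fig.~\ref{fig: after_snodato_186_gluing_sigma_6}, not the four-node shape of Fig.~\ref{fig: secondo_disegno_con_ovali}; and ``one more $\xi$-gluing'' would overshoot to degree~$7$. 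Finally, the numerical lists do not match: compare, e.g., item $1.$ here ($s\in\{6,10\}$) with the lists in Proposition~\ref{prop: esistenza_in_sigma_5}; there is no natural bijection of the sort you describe.

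The paper does not attempt any such reduction. For items $1.$--$3.$ it builds the required degree-$6$ completions \emph{ab initio} by gluing six cubic trigonal graphs (Section~\ref{subsubsec: gluing cubics}) arranged so that the associated $\mathcal{L}$-scheme on $\mathbb{R}\Sigma_6$ has the four-fibre pattern of $\tilde\eta_{i,d,e,h,g}$, and checks the type~I condition at the extremal values $g=s$, $i+d+e+h=k$ via the labeling rules. For the extra type~I items $4.$--$6.$ it simply exhibits six explicit type~I real trigonal graphs of degree~$6$ (Fig.~\ref{fig: compl_grafo_12/0_9/1_6/2_7/3_5/5_4/4_type_I}, $a)$--$f)$), one for each of the listed values of $g$; these are not obtained from the graphs of Propositions~\ref{prop: esistenza_in_sigma_5} or~\ref{prop: da sigma_5 a sigma_6}. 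In short, Proposition~\ref{prop: secondo_disegno} is a parallel construction, not a corollary, and your proof needs to produce the degree-$6$ graphs directly.
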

\begin{proof}
Thanks to Theorems \ref{thm: existence_trigonal}, \ref{thm: existence_trigonal_type_I},  if the real graphs associated to $\eta_{i,d,e,h,g}$ are completable in degree $6$  to a real trigonal graph of type I (resp. II), then there exist real algebraic trigonal curves of type I (resp. type II) realizing $\eta_{i,d,e,h,g}$.\\ 
Let $\tilde{\eta}_{i,d,e,h,g}$ be, up to isotopy of $\mathbb{R}\Sigma_6$, the union of a trigonal $\mathcal{L}$-scheme with four fibers of $\mathcal{L}$ on $\mathbb{R}\Sigma_6$ as depicted in Fig. \ref{fig: secondo_disegno_con_ovali} $a)$. Remark that, for $i,d,e,h,g$ as listed in $1. - 6.$ above, the existence of real trigonal graphs of degree $6$ and type I (resp. type II) completing the real graph associated to $\eta_{i,d,e,h,g}$ is equivalent to the existence of those of type I (resp. type II) associated to $\tilde{\eta}_{i,d,e,h,g}$ (see \cite{Orev03}). \\
We can glue $6$ cubic trigonal graphs in such a way that we obtain real trigonal graphs of degree $6$, which complete the real graph associated to $\tilde{\eta}_{i,d,e,h,g}$ where $i,d,e,h,g$ are such that $0\leq g \leq s$ and $ 0\leq i+d+e+h \leq k$, for $s,k$ as listed in $1.-3.$ above. Type I completions of the real graph associated to $\tilde{\eta}_{i,d,e,h,g}$, for values listed in $4.-6.$ above, are 
pictured in Fig. \ref{fig: compl_grafo_12/0_9/1_6/2_7/3_5/5_4/4_type_I}.
\end{proof}

\begin{figure}[h!]
\begin{center}
\begin{picture}(200,230)
\put(-75,-20){\includegraphics[width=0.9\textwidth]{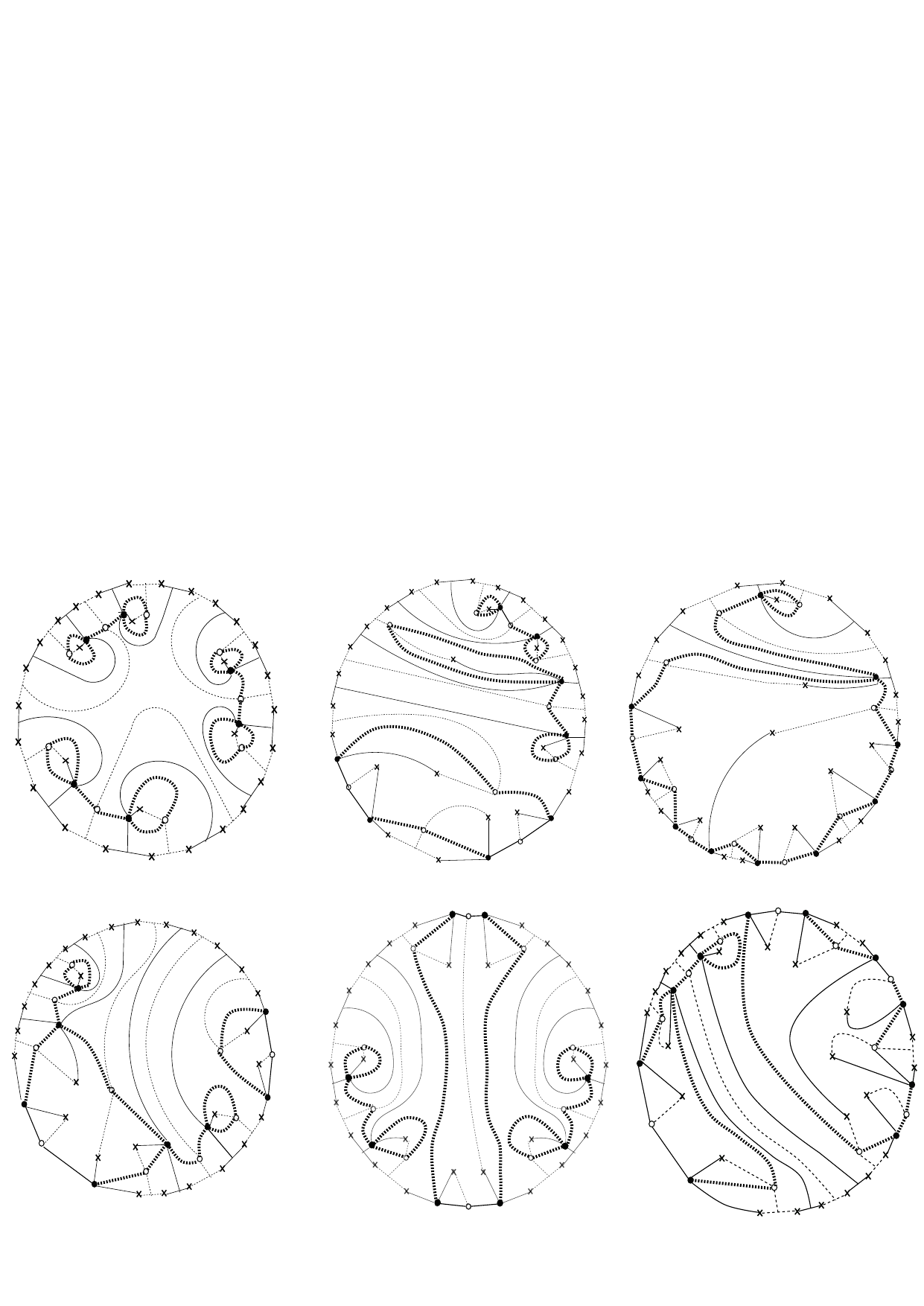}}
\put(-25,200){$3$}
\put(-26,130){$a)$}
\put(94,195){$2$}
\put(90,130){$b)$}
\put(202,190){$1$}
\put(202,130){$c)$} 
\put(-33,70){$2$}
\put(-26,-5){$d)$}
\put(94,70){$1$}
\put(90,-5){$e)$}
\put(203,70){$2$}
\put(202,-5){$f)$} 
\end{picture}
\end{center}
\caption{Real trigonal graphs of degree $6$ and type I.}
\label{fig: compl_grafo_12/0_9/1_6/2_7/3_5/5_4/4_type_I}
\end{figure} 
\begin{figure}[h!]
\begin{center}
\begin{picture}(200,180)
\put(-55,5){\includegraphics[width=0.8\textwidth]{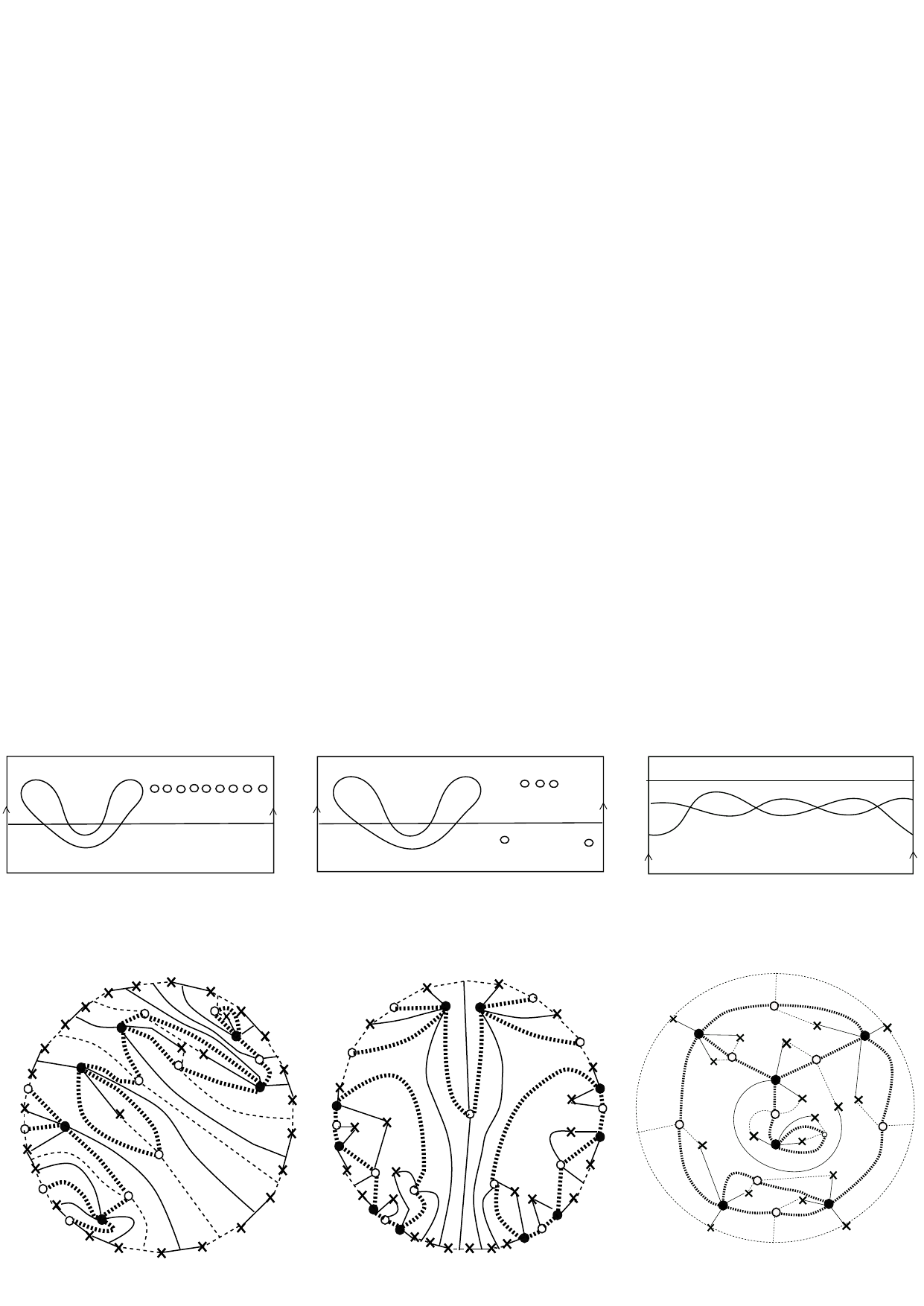}} 

\put(190,0){\textcolor{black}{$f)$}}
\put(92,0){\textcolor{black}{$e)$}}
\put(90,55){\textcolor{black}{$3$}}
\put(-10,0){\textcolor{black}{$d)$}}
\put(0,30){$2$}
\put(190,130){\textcolor{black}{$c)$}}
\put(215,130){$\mathbb{R}\Sigma_6$}
\put(90,130){\textcolor{black}{$b)$}}
\put(115,130){$\mathbb{R}\Sigma_6$}

\put(-15,130){\textcolor{black}{$a)$}}
\put(10,130){$\mathbb{R}\Sigma_6$}

\end{picture}
\end{center}
\caption{Trigonal $\mathcal{L}$-schemes on $\mathbb{R}\Sigma_6$ and the completion of their real graphs in degree 6.}
\label{fig: primo_dessin_enfant_a=9_a=3_b=2_nido_max_compl}
\end{figure} 
\begin{prop}
\label{prop: hyperbolic_e_non hyperbolic}
There exist real algebraic trigonal curves of type I in $\Sigma_6$ realizing the trigonal $\mathcal{L}$-schemes respectively depicted in Fig. \ref{fig: primo_dessin_enfant_a=9_a=3_b=2_nido_max_compl} $a)$ and $b)$. Moreover, there exists a real algebraic trigonal curve in $\Sigma_6$ realizing the hyperbolic (see Remark \ref{rem: hyperbolic_type I}) $\mathcal{L}$-scheme depicted in Fig. \ref{fig: primo_dessin_enfant_a=9_a=3_b=2_nido_max_compl} $c)$.
\end{prop}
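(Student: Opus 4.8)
The plan is to reduce the whole statement to a question about real trigonal graphs and then answer it by gluing cubics. By Theorem~\ref{thm: existence_trigonal} and Theorem~\ref{thm: existence_trigonal_type_I} it suffices to exhibit, for each of the two non-hyperbolic $\mathcal{L}$-schemes depicted in Fig.~\ref{fig: primo_dessin_enfant_a=9_a=3_b=2_nido_max_compl}~$a)$ and $b)$, a completion in degree $6$ of its real graph (in the sense of Definition~\ref{defn: completable_graph}) which is of type I in the sense of Definition~\ref{defn: numerazione_typeI}; and for the hyperbolic $\mathcal{L}$-scheme in Fig.~\ref{fig: primo_dessin_enfant_a=9_a=3_b=2_nido_max_compl}~$c)$, merely a completion in degree $6$ of its real graph, since by Remark~\ref{rem: hyperbolic_type I} any real trigonal curve realizing a hyperbolic $\mathcal{L}$-scheme is automatically of type I. Thus the proof becomes a matter of producing three graphs and checking the axioms of Definition~\ref{defn: complete_graph}.

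First I would read off the real graph $\overline{\Gamma}$ of each $\mathcal{L}$-scheme from its local topology using the recipe of Definition~\ref{defn: completable_graph} (a $\times$-vertex at each fiber meeting the scheme in two points, $\circ$- and $\bullet$-vertices together with bold edges where prescribed, dotted edges over fibers meeting the scheme in three real points, solid edges over fibers meeting it in one). For $a)$ and $b)$ I would then build the completion by gluing six cubic trigonal graphs of type I — the cubic of Fig.~\ref{fig: cubic}~$a)$ and its iterated glueings as in Section~\ref{subsubsec: gluing cubics} — choosing the gluing pattern along $\circ$-vertices so that the trigonal $\mathcal{L}$-scheme carried by the result is precisely the prescribed one. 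Since the cubics are glued along vertices whose neighboring components carry the same label, the resulting degree-$6$ graph is again of type I, and by Remark~\ref{rem: labeling_just_one_number} it is enough to record a single label; these are the completions displayed in Fig.~\ref{fig: primo_dessin_enfant_a=9_a=3_b=2_nido_max_compl}~$a),b)$. For $c)$ the scheme meets every fiber in three real points, so its real graph consists only of dotted edges together with $\bullet$- and $\circ$-vertices on $\mathbb{R}P^1$; I would complete it in degree $6$ by the analogous gluing, obtaining the graph of Fig.~\ref{fig: primo_dessin_enfant_a=9_a=3_b=2_nido_max_compl}~$c)$, and then conclude by Theorem~\ref{thm: existence_trigonal} and Remark~\ref{rem: hyperbolic_type I}.

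The main obstacle is purely bookkeeping, but bookkeeping that must be carried out carefully: one must verify that each glued graph really satisfies all of conditions $(1)$--$(7)$ of Definition~\ref{defn: complete_graph} — in particular the valency conditions at $\circ$- and $\bullet$-vertices and the balance condition that the edges incident to the essential vertices of each fixed type sum to $12\cdot 6 = 72$ — and that the orientations of the two complex-conjugate halves agree along $\mathbb{R}P^1$. For $a)$ and $b)$ one must also check that the type I labelings of the individual cubics agree across every gluing vertex (this is exactly where the "same label" hypothesis on gluing from Section~\ref{subsubsec: gluing cubics} is used) and then confirm that the $\mathcal{L}$-scheme read off from the completed graph via Definition~\ref{defn: completable_graph} is $\mathcal{L}$-isotopic on $\mathbb{R}\Sigma_6$ to the target. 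For $c)$ the only genuine subtlety is that Theorem~\ref{thm: existence_trigonal_type_I} does not apply to hyperbolic $\mathcal{L}$-schemes, so the type I conclusion must come from Remark~\ref{rem: hyperbolic_type I}, and one has to be certain the completion exists at all — i.e. that the hyperbolic real graph, which carries no solid edges, can nonetheless be closed up into a connected, complex-conjugation-invariant real trigonal graph of degree $6$.
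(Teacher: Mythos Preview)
Your reduction is exactly the paper's: invoke Theorems~\ref{thm: existence_trigonal} and~\ref{thm: existence_trigonal_type_I} to turn the problem into exhibiting degree-$6$ completions of the three real graphs, check type~I labelings for the two non-hyperbolic schemes, and appeal to Remark~\ref{rem: hyperbolic_type I} for the hyperbolic one. The paper's proof does precisely this, simply displaying the three completions in Fig.~\ref{fig: primo_dessin_enfant_a=9_a=3_b=2_nido_max_compl}~$d)$, $e)$, $f)$ and asserting that $d)$ and $e)$ are of type~I.

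Two small points. First, a labeling slip: the completions sit in parts $d)$, $e)$, $f)$ of the figure, not in $a)$, $b)$, $c)$ (those are the target $\mathcal{L}$-schemes themselves). Second, the paper does not describe these particular completions as glued from six type~I cubics; it just draws them. Your gluing-of-cubics plan is consistent with the methodology used elsewhere in the paper (e.g.\ Propositions~\ref{prop: esistenza_in_sigma_5} and~\ref{prop: secondo_disegno}), but for the hyperbolic scheme in $c)$ you should be a bit careful: the cubic of Fig.~\ref{fig: cubic}~$a)$ carries an oval and is not hyperbolic, so gluing six copies of it will not by itself produce a scheme meeting every fiber in three real points. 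The paper sidesteps this by exhibiting the graph in $f)$ directly rather than assembling it from the standard type~I cubic.
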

\begin{proof}
Thanks to Theorems \ref{thm: existence_trigonal}, \ref{thm: existence_trigonal_type_I},  if the real graphs associated to the real schemes in the statement are completable in degree $6$ to a real trigonal graph of type I (resp. II), then there exist real algebraic trigonal curves of type I (resp. type II) realizing them.\\
Respective completions in degree $6$ of the real graphs associated to the $\mathcal{L}$-schemes in Fig. \ref{fig: primo_dessin_enfant_a=9_a=3_b=2_nido_max_compl} $a),b)$ and $c)$ are pictured in Fig. \ref{fig: primo_dessin_enfant_a=9_a=3_b=2_nido_max_compl} $d)$, $e)$ and $f)$. Furthermore, the trigonal graphs depicted in Fig. \ref{fig: primo_dessin_enfant_a=9_a=3_b=2_nido_max_compl} $d)$ and $e)$ are of type I.
\end{proof}

\begin{figure}[h!]
\begin{center}
\begin{picture}(100,90)
\put(-175,-30){\includegraphics[width=1.2\textwidth]{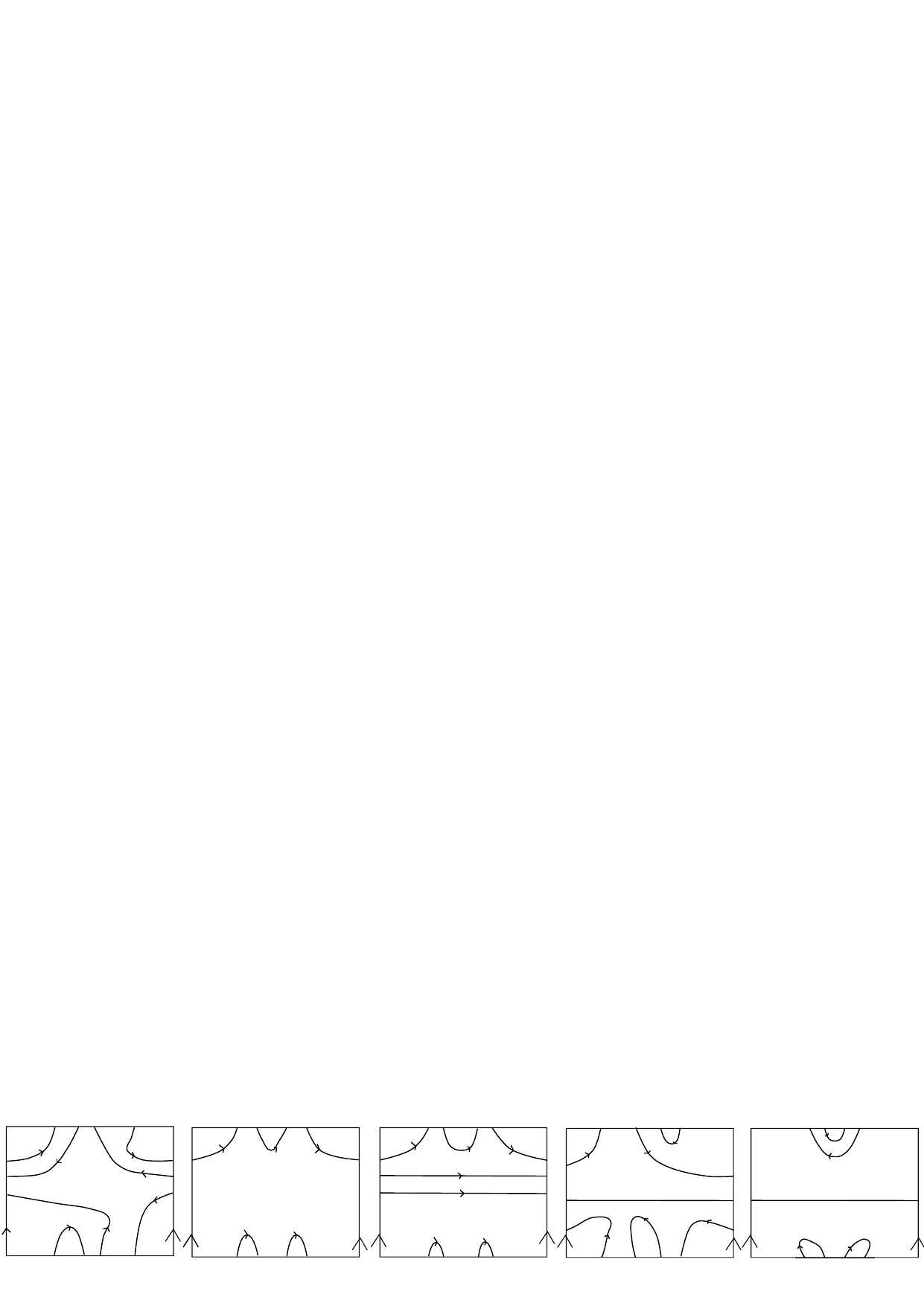}} 

\put(-135,-19){$a)$}
\put(-137,8){$a$}
\put(-135,38){$c$}
\put(-123,48){$b$}
\put(-118,18){$c'$}

\put(-47,-19){$b)$}
\put(-80,47){$i$}
\put(-77,25){$g_1$}
\put(-60,-6){$d$}
\put(-57,25){$g_2$}
\put(-46,50){$e$}
\put(-45,25){$g_3$}
\put(-34,-6){$h$}
\put(-32,25){$g_4$}

\put(134,-19){$d)$}
\put(119,0){\textcolor{black}{$a'$}}
\put(120,40){\textcolor{black}{$b'$}}
\put(42,-19){$c)$}
\put(193,0){\textcolor{black}{$a'$}}
\put(195,40){\textcolor{black}{$b'$}}
\put(224,-19){$e)$}

\end{picture}
\end{center}
\caption{$\mathcal{L}$-schemes $\eta_1,\eta_2,\eta_3,\eta_4,\eta_5$ on $\mathbb{R}\Sigma_2$.}
\label{fig: c_i}
\end{figure}
\begin{prop}
\label{prop: costruzione_più_cicciona}
Let $\eta_1$, resp. $\eta_2, \eta_3$ and $\eta_4$, be a trigonal $\mathcal{L}$-scheme on $\mathbb{R}\Sigma_2$, up to isotopy of $\mathbb{R}\Sigma_2$, as depicted in Fig. \ref{fig: c_i} $a)$, resp. $b),c)$ and $d)$, where $a,b,c,c'$, resp. $i,d,e,h,g$, resp. $a',b'$ denote numbers of ovals. Then, such $\mathcal{L}$-scheme is realizable by a non-singular real algebraic curve $C_1$ (resp. $C_2,C_3$ and $C_4$) of bidegree $(3,4)$ on $\Sigma_2$ for $a,b, c,c'$ as listed in Proposition \ref{prop: esistenza_in_sigma_5} (resp. $i,d,e,h,g$ as listed in Proposition \ref{prop: secondo_disegno}, resp. $(a',b')=(3,2)$ and $(9,0)$).
\end{prop}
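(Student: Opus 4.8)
The plan is to produce $C_1,\dots,C_4$ from the (nodal) trigonal curves of Propositions~\ref{prop: esistenza_in_sigma_5},~\ref{prop: da sigma_5 a sigma_6},~\ref{prop: secondo_disegno} and~\ref{prop: hyperbolic_e_non hyperbolic} by a chain of elementary transformations between Hirzebruch surfaces, i.e.\ of the birational maps $\beta^{p}_{n}$ and their inverses introduced in Section~\ref{subsec: Hirzebruch_surf}. Two properties of such a transformation are used. First, it is an isomorphism away from a single fiber, so it sends a real algebraic curve $C\subset\Sigma_{n}$ to a real algebraic curve $C'$ in $\Sigma_{n\pm1}$ with isomorphic normalisation; in particular $C'$ has the type (I or II) of $C$, in the sense of the convention recalled just before Theorem~\ref{thm: existence_trigonal_type_I}, and its real scheme in $\mathbb{R}\Sigma_{n\pm1}$ coincides with the one of $C$ everywhere except near the modified fiber. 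Second, when it is centred at (the fiber through) a node of $C$, it resolves that node and, at the same time, replaces the class $a[B_{n}]+b[F_{n}]$ of $C$ by $a[B_{n-1}]+(b+1)[F_{n-1}]$, while near the modified fiber the real picture becomes the ``opened-up'' one. Iterating, a trigonal curve $3[B_{m}]$ on $\Sigma_{m}$ carrying a suitable number of nodes off $\mathbb{R}E_{m}$ is turned, after $m-2$ steps, into a \emph{non-singular} curve of class $3[B_{2}]+4[F_{2}]$, i.e.\ of bidegree $(3,4)$, on $\Sigma_{2}$.

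Concretely, for $C_{2}$ I would take the real nodal trigonal curve in $\Sigma_{6}$ realising $\eta_{i,d,e,h,g}$ given by Proposition~\ref{prop: secondo_disegno}, whose nodes sit on the four fibers drawn in Fig.~\ref{fig: secondo_disegno_con_ovali}, and perform the four elementary transformations $\Sigma_{6}\dashrightarrow\Sigma_{5}\dashrightarrow\Sigma_{4}\dashrightarrow\Sigma_{3}\dashrightarrow\Sigma_{2}$ centred successively at these fibers. By the two properties above, the outcome is a non-singular curve of bidegree $(3,4)$ on $\Sigma_{2}$, of the type of the trigonal curve one started with (hence known from Proposition~\ref{prop: secondo_disegno}), whose real picture is obtained from $\eta_{i,d,e,h,g}$ by opening the four marked fibers; one checks that this is precisely $\eta_{2}$ of Fig.~\ref{fig: c_i}~$b)$, with the packet of $g_{j}$ ovals coming out of the $j$-th opened fiber. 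The same recipe gives $C_{1}$, starting from the trigonal curve of Proposition~\ref{prop: da sigma_5 a sigma_6} built on the graphs of Proposition~\ref{prop: esistenza_in_sigma_5}, and $C_{3}$, $C_{4}$, starting from the trigonal curves of Proposition~\ref{prop: hyperbolic_e_non hyperbolic}; for $C_{4}$ one uses the hyperbolic curve there, which is of type I by Remark~\ref{rem: hyperbolic_type I}. The only thing that changes from case to case is how many transformations are needed to reach $\Sigma_{2}$ and which of them are centred at a node and which at a point of the exceptional section.

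What then has to be checked is twofold. On the one hand one must verify that the chain of elementary transformations can be realised over $\mathbb{R}$ with centres on the prescribed fibers and, above all, with no new node created along the way and with the terminal curve meeting $E_{2}$ only at imaginary points — it is this requirement that dictates, at each step, the exact choice of centre. On the other hand one has to carry out the local analysis of the arrangement of ovals near each modified fiber and read off that what results is exactly the $\mathcal{L}$-scheme $\eta_{1}$ (resp.\ $\eta_{2},\eta_{3},\eta_{4}$) of Fig.~\ref{fig: c_i}, for $(a,b,c,c')$ as in Proposition~\ref{prop: esistenza_in_sigma_5} (resp.\ $(i,d,e,h,g)$ as in Proposition~\ref{prop: secondo_disegno}, resp.\ $(a',b')\in\{(3,2),(9,0)\}$). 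The statements about the type of $C_{1},\dots,C_{4}$ are then immediate, since the type is preserved by the transformations and the final curves are non-singular.

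I expect this bookkeeping to be the only real obstacle: one has to track, fiber by fiber and step by step, both the singular points and the real arrangement of ovals, and make sure nothing goes wrong simultaneously on the algebraic and on the topological side. There is no new conceptual ingredient, since the elementary transformations and their effect on classes are standard (Section~\ref{subsec: Hirzebruch_surf}), and the curves feeding the construction are exactly those already produced in the preceding propositions.
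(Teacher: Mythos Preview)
Your approach is the same as the paper's: start from the (nodal) trigonal curves in $\Sigma_6$ produced in Propositions~\ref{prop: da sigma_5 a sigma_6}, \ref{prop: secondo_disegno}, \ref{prop: hyperbolic_e_non hyperbolic}, and apply four elementary transformations $\beta^{-1}_{p_4},\dots,\beta^{-1}_{p_1}$ to land on a non-singular curve of bidegree $(3,4)$ in $\Sigma_2$, reading off the $\mathcal{L}$-scheme from the sequence of pictures (the paper does this literally via Figs.~\ref{fig: blowup_down C_1}--\ref{fig: blowup_down C_4}).

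A few small corrections. First, none of the centres is ever taken on the exceptional section: an elementary transformation centred on $E_n$ would send you to $\Sigma_{n+1}$, the wrong direction, and would also change the intersection with the exceptional section. In the paper every $p_k$ is a real node of the curve (or of its successive images); your own bidegree computation $(a,b)\mapsto(a,a+b-m)$ shows that only $m=2$ at each of the four steps turns $(3,0)$ into $(3,4)$ while keeping the image smooth ($a-m=1$). In particular the trigonal curves of Proposition~\ref{prop: hyperbolic_e_non hyperbolic} that feed $C_3$ and $C_4$ are themselves nodal, not smooth --- otherwise the genera $3\cdot 6-2=16$ and $12$ could not match. Second, the number of transformations does not vary from case to case: all four $\tilde C_j$ live in $\Sigma_6$, so it is always exactly four steps. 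Third, you have $C_3$ and $C_4$ swapped: in the paper the hyperbolic curve of Fig.~\ref{fig: primo_dessin_enfant_a=9_a=3_b=2_nido_max_compl}~$c)$ yields $C_3$ (the parameter-free $\eta_3$), while the two non-hyperbolic type~I curves of Fig.~\ref{fig: primo_dessin_enfant_a=9_a=3_b=2_nido_max_compl}~$a),b)$ yield $C_4$ with $(a',b')\in\{(3,2),(9,0)\}$.
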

\begin{figure}[h!]
\begin{center}
\begin{picture}(100,70)
\put(100,-10){\textcolor{black}{$\mathbb{R}\Sigma_5$}}
\put(20,-10){\textcolor{black}{$\mathbb{R}\Sigma_4$}}
\put(-60,-10){\textcolor{black}{$\mathbb{R}\Sigma_3$}}
\put(190,-10){\textcolor{black}{$\mathbb{R}\Sigma_6$}}
\put(-125,-30){\includegraphics[width=0.95\textwidth]{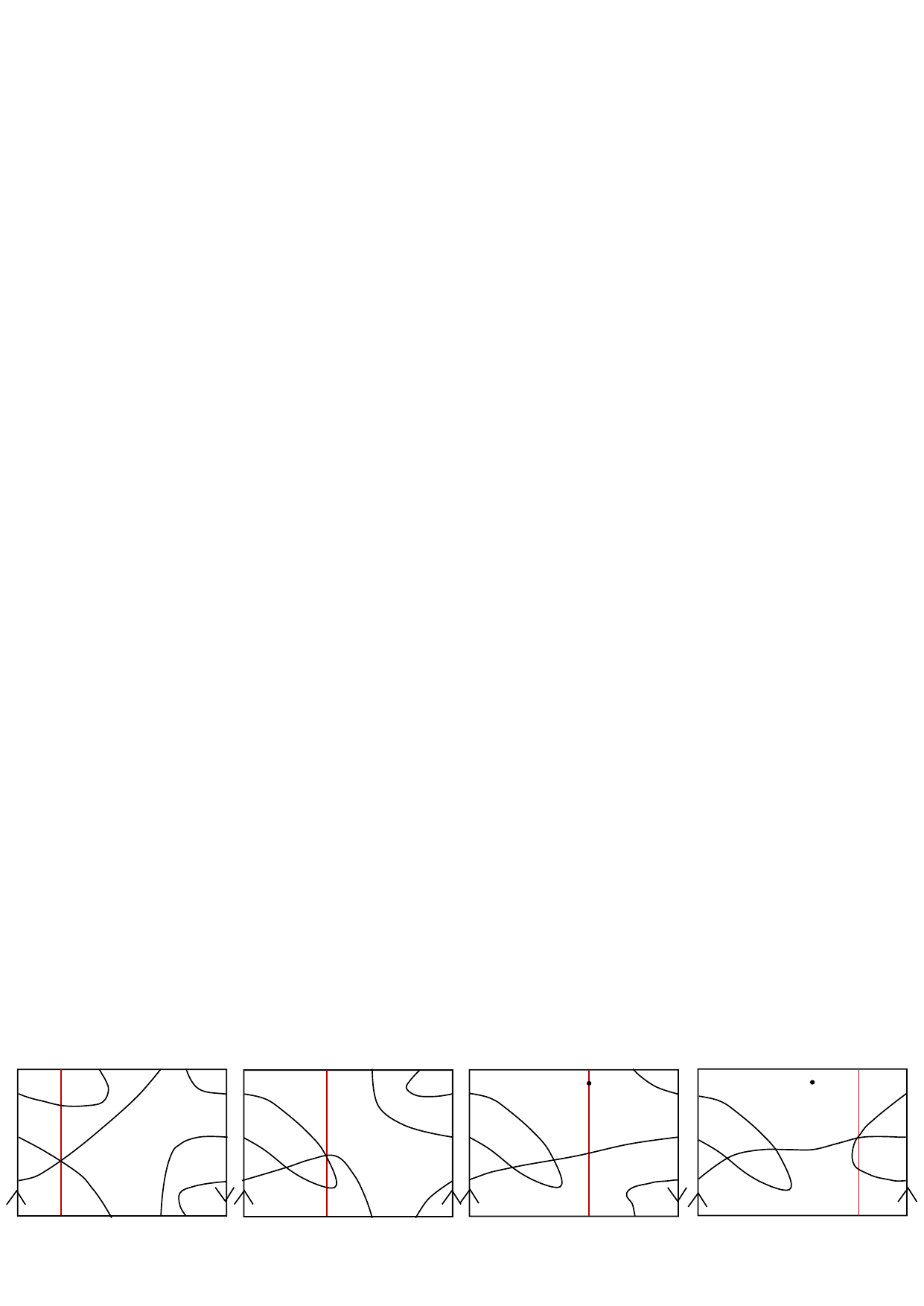}} 

\put(-97,23){$p_1$}
\put(-63,5){$b$}
\put(-80,50){$a$}
\put(-75,5){$c$}
\put(-62,50){$c'$}

\put(-11,28){$p_2$}
\put(26,5){$c'$}
\put(5,50){$c$}
\put(5,5){$a$}
\put(22,50){$b$}
\put(89,52){$p_3$}
\put(106,5){$c'$}
\put(90,41){$c$}
\put(90,5){$a$}
\put(102,41){$b$}
\put(202,26){$p_4$}
\put(193,5){$b$}
\put(175,50){$c$}
\put(180,5){$a$}
\put(189,50){$c'$}
\end{picture}
\end{center}
\caption{Birational transformation of the pair $(\mathbb{R}\Sigma_6, \mathbb{R}\tilde{C_1})$, from right to left.}
\label{fig: blowup_down C_1}
\end{figure}
\begin{figure}[h!]
\begin{center}
\begin{picture}(100,80)
	
\put(100,-10){\textcolor{black}{$\mathbb{R}\Sigma_5$}}
\put(20,-10){\textcolor{black}{$\mathbb{R}\Sigma_4$}}
\put(-60,-10){\textcolor{black}{$\mathbb{R}\Sigma_3$}}
\put(190,-10){\textcolor{black}{$\mathbb{R}\Sigma_6$}}
\put(-125,-30){\includegraphics[width=0.95\textwidth]{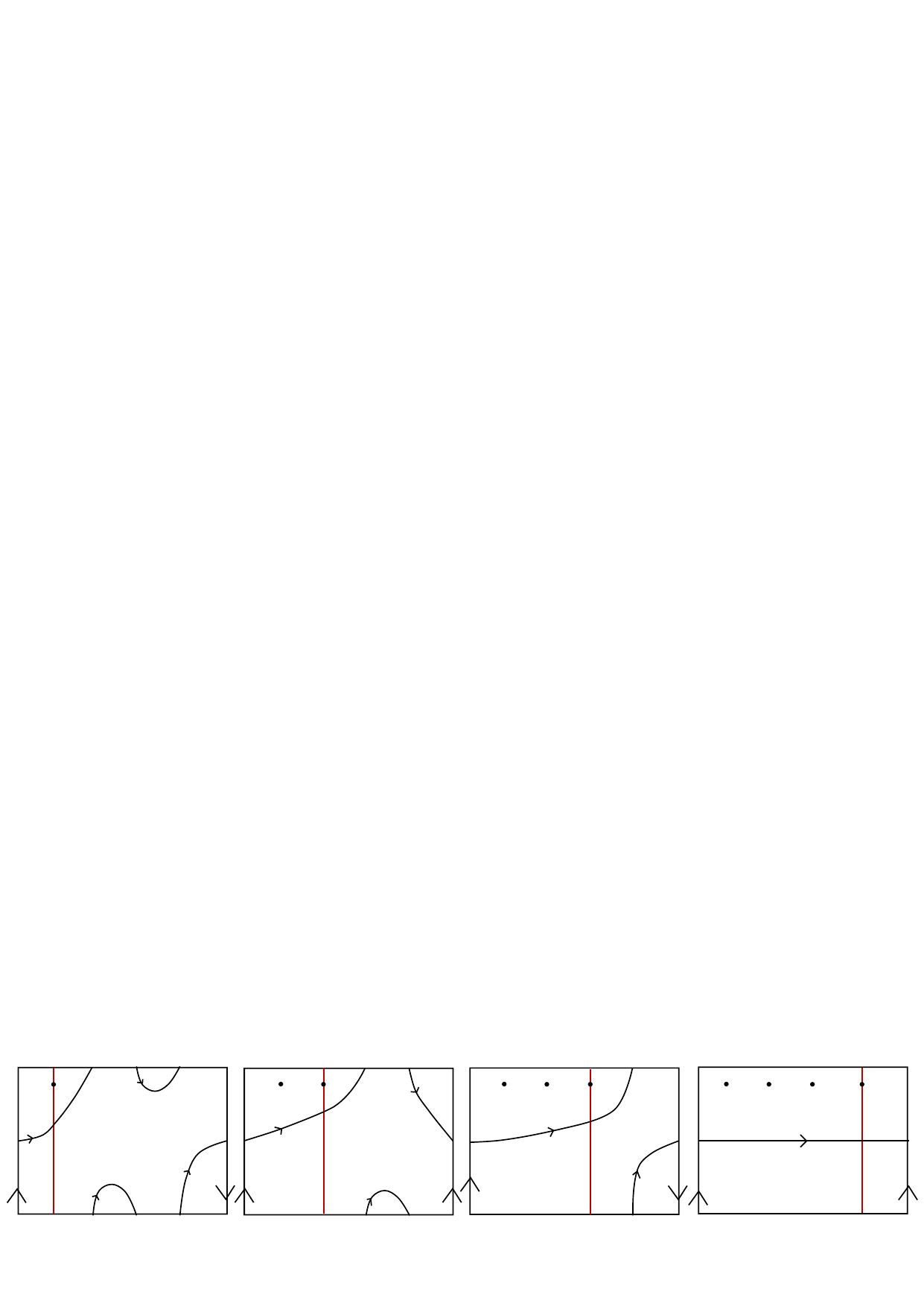}} 

\put(-114,53){$p_1$}
\put(-115,40){$i$}
\put(-100,50){$d$}
\put(-85,5){$e$}
\put(-67,53){$h$}

\put(-113,25){$g_1$}
\put(-98,25){$g_2$}
\put(-83,25){$g_3$}
\put(-65,25){$g_4$}
\put(-12,54){$p_2$}
\put(-26,50){$i$}
\put(-13,43){$d$}
\put(3,50){$e$}
\put(21,5){$h$}

\put(-24,25){$g_1$}
\put(-13,25){$g_2$}
\put(5,25){$g_3$}
\put(23,25){$g_4$}
\put(89,55){$p_3$}
\put(60,50){$i$}
\put(75,50){$d$}
\put(90,43){$e$}
\put(105,50){$h$}

\put(62,25){$g_1$}
\put(77,25){$g_2$}
\put(92,25){$g_3$}
\put(107,25){$g_4$}

\put(205,55){$p_4$}
\put(145,50){$i$}
\put(160,50){$d$}
\put(175,50){$e$}
\put(190,50){$h$}

\put(147,25){$g_1$}
\put(162,25){$g_2$}
\put(177,25){$g_3$}
\put(192,25){$g_4$}
\end{picture}
\end{center}
\caption{Birational transformation of the pair $(\mathbb{R}\Sigma_6, \mathbb{R}\tilde{C_2})$, from right to left.}
\label{fig: blowup_down C_2}
\end{figure}
\begin{figure}[h!]
\begin{center}
\begin{picture}(100,80)
	
\put(100,-10){\textcolor{black}{$\mathbb{R}\Sigma_5$}}
\put(20,-10){\textcolor{black}{$\mathbb{R}\Sigma_4$}}
\put(-60,-10){\textcolor{black}{$\mathbb{R}\Sigma_3$}}
\put(190,-10){\textcolor{black}{$\mathbb{R}\Sigma_6$}}
\put(-125,-30){\includegraphics[width=0.95\textwidth]{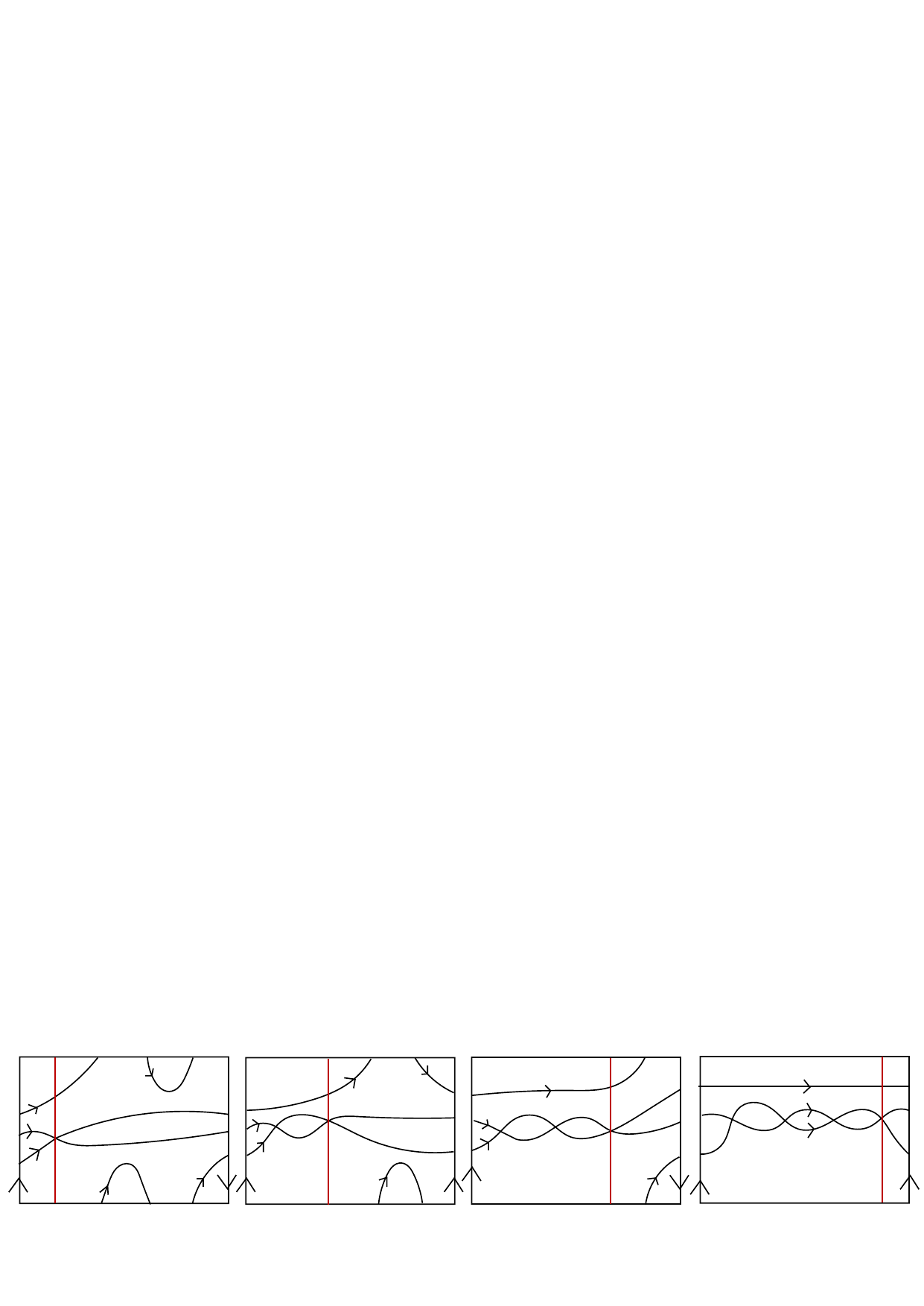}} 
\put(-103,27){$p_1$}

\put(3,30){$p_2$}

\put(110,27){$p_3$}
\put(200,30){$p_4$}
\end{picture}
\end{center}
\caption{Birational transformation of the pair $(\mathbb{R}\Sigma_6, \mathbb{R}\tilde{C_3})$, from right to left.}
\label{fig: blowup_down C_3}
\end{figure}
\begin{figure}[h!]
\begin{center}
\begin{picture}(100,80)
\put(105,-15){\textcolor{black}{$\mathbb{R}\Sigma_5$}}
\put(20,-15){\textcolor{black}{$\mathbb{R}\Sigma_4$}}
\put(-60,-15){\textcolor{black}{$\mathbb{R}\Sigma_3$}}
\put(195,-15){\textcolor{black}{$\mathbb{R}\Sigma_6$}}
\put(-125,-30){\includegraphics[width=0.95\textwidth]{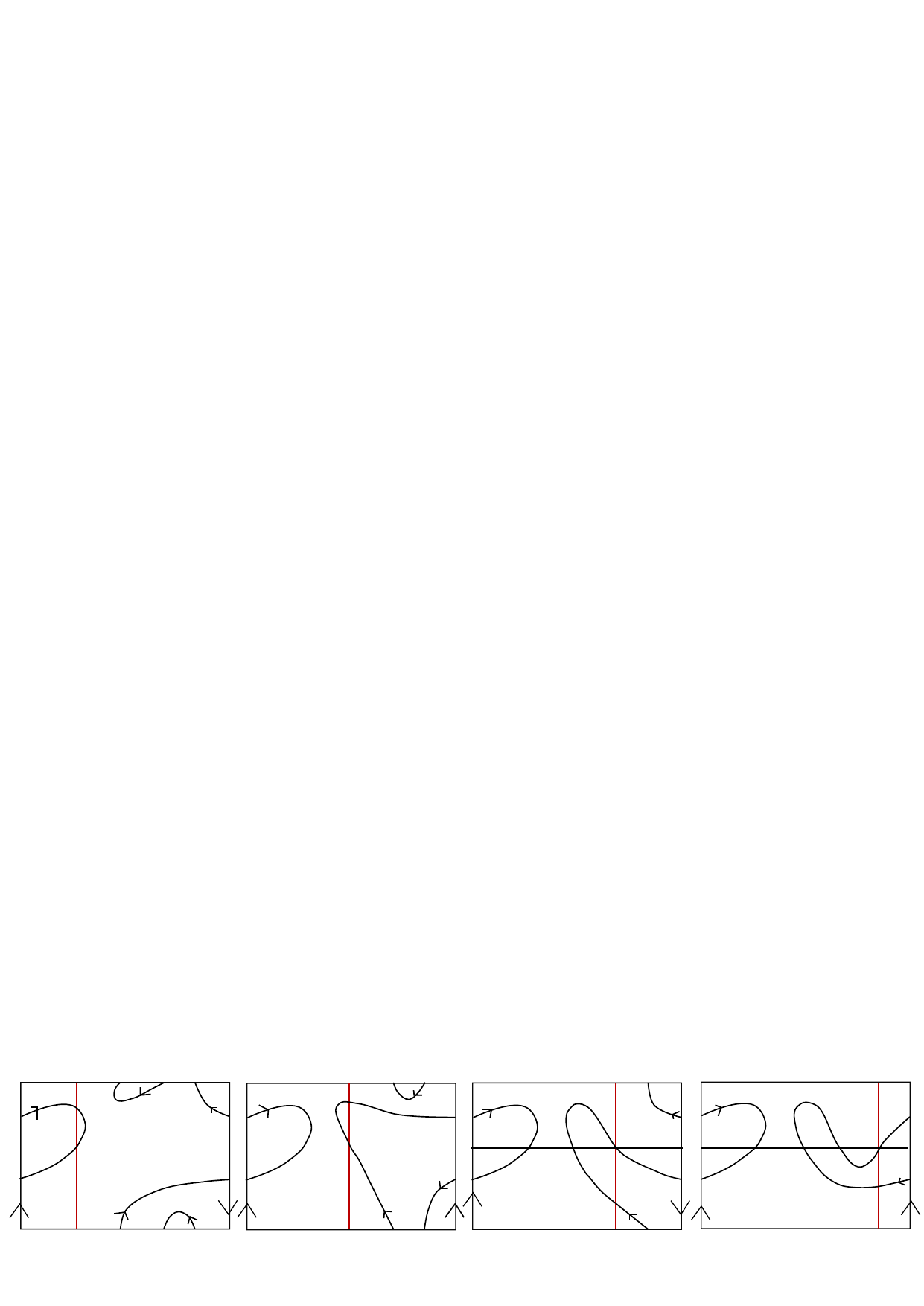}} 
\put(-105,32){$p_1$}
\put(-90,40){$a'$}
\put(-90,5){$b'$}

\put(11,32){$p_2$}
\put(-5,40){$a'$}
\put(-5,5){$b'$}

\put(110,32){$p_3$}
\put(80,40){$a'$}
\put(80,5){$b'$}

\put(200,32){$p_4$}
\put(167,40){$a'$}
\put(167,5){$b'$}
\end{picture}
\end{center}
\caption{Birational transformation of the pair $(\mathbb{R}\Sigma_6, \mathbb{R}\tilde{C_4})$, from right to left.}
\label{fig: blowup_down C_4}
\end{figure}
\begin{figure}[h!]
\begin{center}
\begin{picture}(100,115)
	
\put(-185,-90){\includegraphics[width=1.3\textwidth]{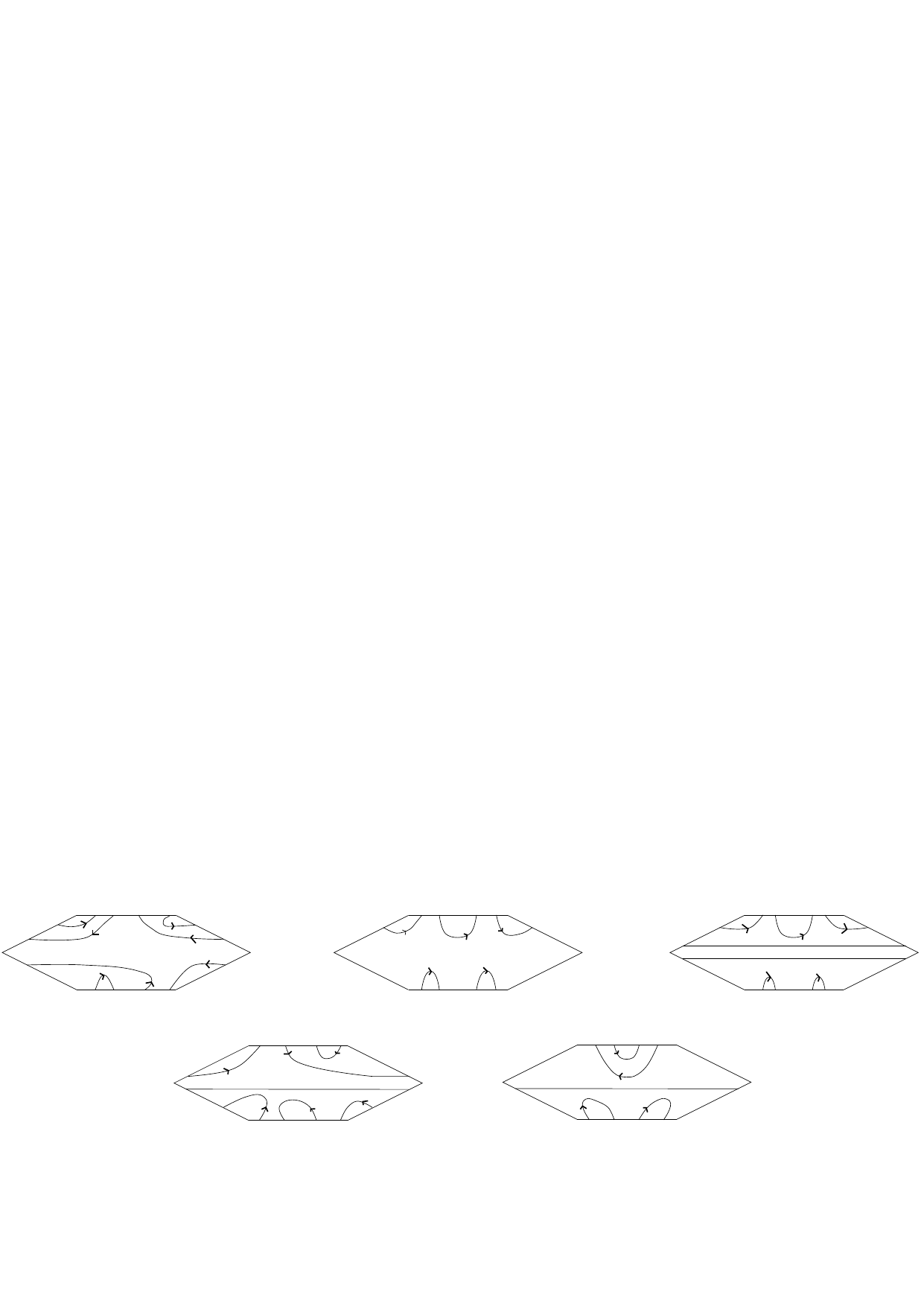}} 

\put(-120,63){$a)C_1$}
\put(-118,78){$a$}
\put(-125,103){$c$}
\put(-106,106){$b$}
\put(-100,83){$c'$}

\put(50,63){$b)C_2$}
\put(24,106){$i$}
\put(27,89){$g_1$}
\put(36,75){$d$}
\put(38,89){$g_2$}
\put(47,108){$e$}
\put(54,89){$g_3$}
\put(65,75){$h$}
\put(67,89){$g_4$}

\put(228,63){$c)C_3$}
\put(-33,-12){$d)C_4$}
\put(-45,15){\textcolor{black}{$a'$}}
\put(-43,30){\textcolor{black}{$b'$}}
\put(139,-12){$e)C_5$}
\put(103,15){\textcolor{black}{$a'$}}
\put(108,30){\textcolor{black}{$b'$}}
\end{picture}
\end{center}
\caption{Charts of real curves $C_1,C_2,C_3,C_4,C_5$ of bidegree $(3,4)$ and type I in $\Sigma_2$.}
\label{fig: charts_c_i}
\end{figure}
\begin{proof}
Let us denote by $\tilde{C}_1$ (resp. $\tilde{C}_2$, $\tilde{C}_3$ and $\tilde{C}_4$) any real algebraic trigonal curves in $\Sigma_6$ constructed in Propositions \ref{prop: da sigma_5 a sigma_6} (resp. Proposition \ref{prop: secondo_disegno}, resp. Proposition \ref{prop: hyperbolic_e_non hyperbolic}). Let us consider, as defined in Section \ref{subsec: Hirzebruch_surf}, for each curve $\tilde{C}_j$ the birational transformation $\Xi_j=:
\beta^{-1}_{p_{1}}\beta^{-1}_{p_{2}}\beta^{-1}_{p_{3}}\beta^{-1}_{p_{4}}: (\Sigma_6,
\tilde{C}_j)\rightarrow (\Sigma_2, C_j)$, where the points $p_k$, $k=1,2,3,4$, are real double points such that $p_4$ belong to $\mathbb{R}\tilde{C}_j$ and $p_k$, $k=3,2,1$, to the image of $\mathbb{R}\tilde{C}_j$ via $\beta^{-1}_{p_{k+1}}$. In Fig. \ref{fig: blowup_down C_1}, \ref{fig: blowup_down C_2}, \ref{fig: blowup_down C_3} and \ref{fig: blowup_down C_4} we depict in red the fiber of $\mathbb{R}\Sigma_{k+2}$ intersecting the point $p_k$.\\
The birational transformation $\beta^{-1}_{p_{2}}\beta^{-1}_{p_{3}}\beta^{-1}_{p_{4}}((\mathbb{R}\Sigma_6, \mathbb{R}\tilde{C}_j))$ is depicted in Fig. \ref{fig: blowup_down C_1}, resp. in Fig. \ref{fig: blowup_down C_2}, Fig. \ref{fig: blowup_down C_3}, Fig. \ref{fig: blowup_down C_4} from right to left and $\Xi_j((\mathbb{R}\Sigma_6, \mathbb{R}\tilde{C}_j))$ in Fig. \ref{fig: c_i} $a)$ for $j=1$, resp. in Fig. \ref{fig: c_i} $b),c)$ and $d)$ for $j=2,3,4$.\\
\end{proof}
Since we will apply Viro's patchworking method in Section \ref{subsec: patch} to construct real algebraic curves of bidegree $(5,0)$ on $\Sigma_2$ (see \cite{Viro84a}, \cite{Viro84b}, \cite{Viro89}, \cite{Viro06}), in Fig. \ref{fig: charts_c_i} $a),b),c)$ and $d)$ we depict the charts of non-singular real algebraic curves of bidigree $(3,4)$ in $\Sigma_2$ constructed in Proposition \ref{prop: costruzione_più_cicciona}. Moreover, performing a coordinates transformation to a curve $C_4$ with chart as depicted in Fig. \ref{fig: charts_c_i} $d)$, we obtain a type I real algebraic curve $C_5$ of bidigree $(3,4)$ in $\Sigma_2$ with chart, resp. real $\mathcal{L}$-scheme, as depicted in Fig. \ref{fig: charts_c_i} $e)$, resp. as depicted in Fig. \ref{fig: c_i} $e)$, where $a',b'$ still denote numbers of ovals and $(a',b')=(3,2)$ or $(9,0)$.
\subsection{Final constructions and patchworking}
\label{subsec: patch}
In this Section we end the proof of Theorems \ref{thm: maximal_thm}, \ref{thm: m-1_curves_thm}, \ref{thm: type_I,II_m-2_thm}, \ref{thm: type_I_II_thm}. We need Viro's patchworking method. Most of all, we use original Viro’s patchworking method which is a tool for constructing non-singular real algebraic hypersurfaces with prescribed topology in real toric varieties (\cite{Viro84a}, \cite{Viro84b}, \cite{Viro89}, \cite{Viro06}). Finally, for a particular construction, we use a variant of the patchworking developed by Shustin (\cite{Shus98}, \cite{Shus02}, \cite{Shus05}, \cite{Shus06}) which exploits the deformation pattern technique and allows to glue charts of polynomials presenting a tangency point with the boundary of the chart.
\begin{rem}
\label{rem: fine}
We want to construct bidegree $(5,5)$ non-singular real algebraic curves on the quadric ellipsoid realizing all real schemes listed in Theorems \ref{thm: maximal_thm}, \ref{thm: m-1_curves_thm}, \ref{thm: type_I,II_m-2_thm}, \ref{thm: type_I_II_thm}. We can reduce the problem of construction of such bidegree $(5,5)$ real curves to the construction of bidegree $(5,0)$ real curves in $\Sigma_2$ with topology prescribed as follows. Let $B$ be one of the real schemes listed in Theorems \ref{thm: maximal_thm}, \ref{thm: m-1_curves_thm}, \ref{thm: type_I,II_m-2_thm}, \ref{thm: type_I_II_thm}. We can choose a real scheme $\eta$ in $\mathbb{R}\Sigma_2$ such that from the arrangement of the triplet $(\mathbb{R}\Sigma_2,\mathbb{R}E_2,\eta)$ one recovers the arrangement $B$ on a $2$-sphere as explained in Section \ref{subsec: the quadratic cone}. Thanks to Proposition \ref{prop: construct_on_torus_to_construct_on_ellipsoid}, if we construct a bidegree $(5,0)$ real curve realizing $\eta $ in $\Sigma_2$, we also construct a bidegree $(5,5)$ real curve realizing $B$ on the quadric ellipsoid. 
\end{rem}
\begin{note}
In Propositions \ref{prop: big_first_gluing}, \ref{prop: big_first_gluing_2}, \ref{prop: big_first_gluing_3} the real schemes marked with the symbol $^{\circ}$ (resp. $^*$) are realized by a real algebraic curve of type I (resp. type II). 
\end{note}
\begin{prop}
\label{prop: big_first_gluing}
All the real schemes in the following list are realizable by non-singular real algebraic curves of bidegree $(5,5)$ on the quadric ellipsoid:
\begin{enumerate}
\item all real schemes listed in Theorem \ref{thm: maximal_thm} but the real schemes $1 \quad \sqcup \quad  \langle 4 \rangle \quad \sqcup \quad \langle 10 \rangle$ and $1 \quad \sqcup \quad  \langle 7 \rangle \quad \sqcup \quad \langle 7 \rangle$;
\item all real schemes listed in Theorem \ref{thm: m-1_curves_thm} but the real schemes $\langle 4 \rangle \quad \sqcup \quad \langle 10 \rangle$ and  $\langle 7 \rangle \quad \sqcup \quad \langle 7 \rangle$;
\item all real schemes listed in Theorem \ref{thm: type_I,II_m-2_thm} but the real scheme $ \langle 4 \rangle \quad \sqcup \quad \langle9 \rangle^{\circ}$; 
\item all real schemes listed in Theorem \ref{thm: type_I_II_thm} but the real schemes $13^{\circ}$, $1 \quad \sqcup \quad \langle 1 \rangle \quad \sqcup \quad \langle 9 \rangle^{\circ}$,  $1 \quad \sqcup \quad \langle 3 \rangle \quad \sqcup \quad  \langle 7 \rangle^{\circ}$, $\langle 1 \rangle \quad \sqcup \quad \langle 4 \rangle^{\circ}$, $1$ and $0$.
\end{enumerate}
\end{prop}
\begin{figure}[h!]
\begin{center}
\begin{picture}(100,55)
\put(-137,-15){\includegraphics[width=1\textwidth]{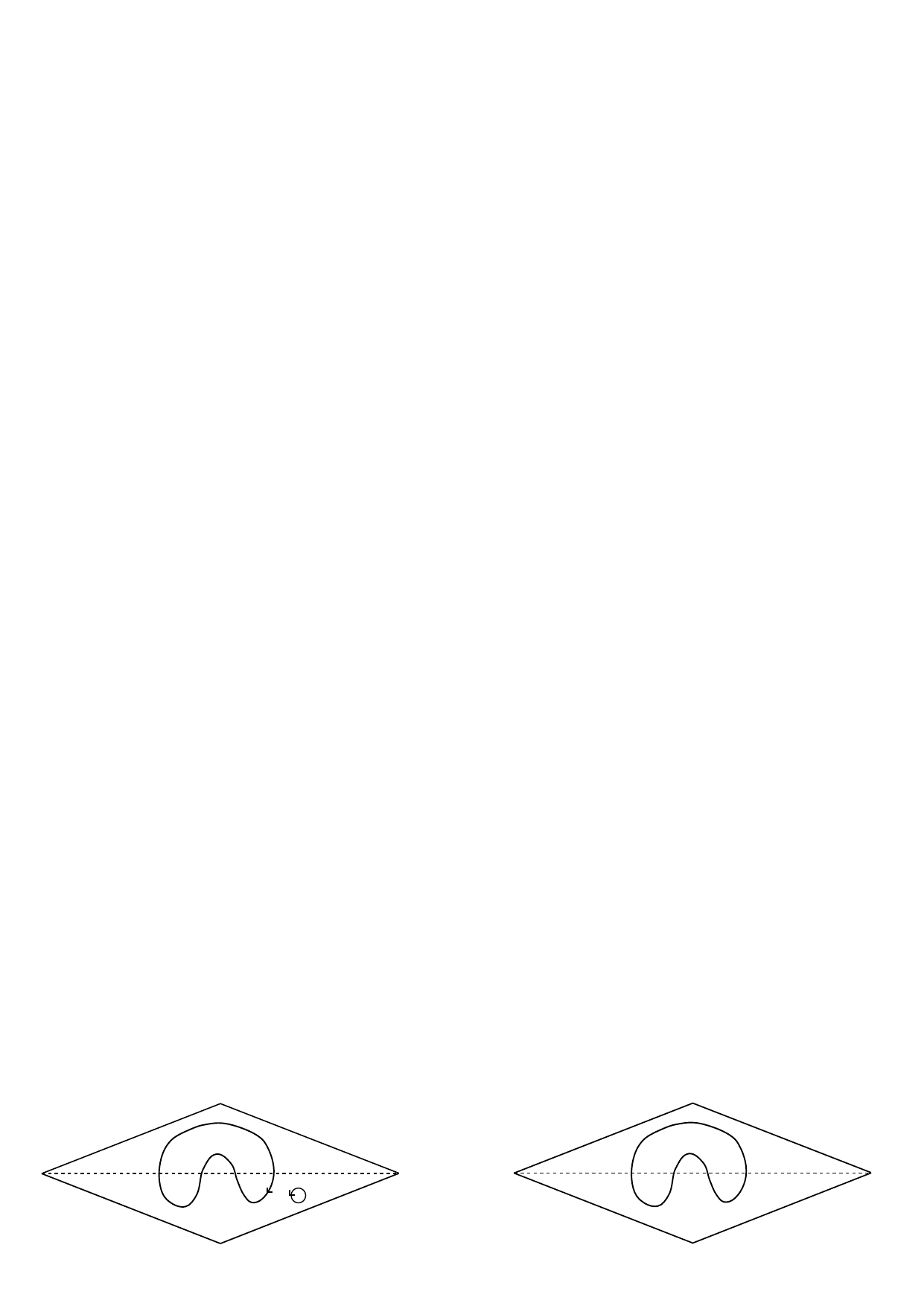}} 
\put(-53,-12){$a)D_1$}
\put(139,-12){$b)D_2$}
\end{picture}
\end{center}
\caption{Charts and arrangements with respect of the coordinate axis $\{y=0\}$ of real algebraic curves of bidegree $(2,0)$ on $\Sigma_2$.}
\label{fig: bid_(2,2)}
\end{figure}
\begin{figure}[t!]
\begin{center}
\begin{picture}(100,500)	
\put(-135,-12){\includegraphics[width=1\textwidth]{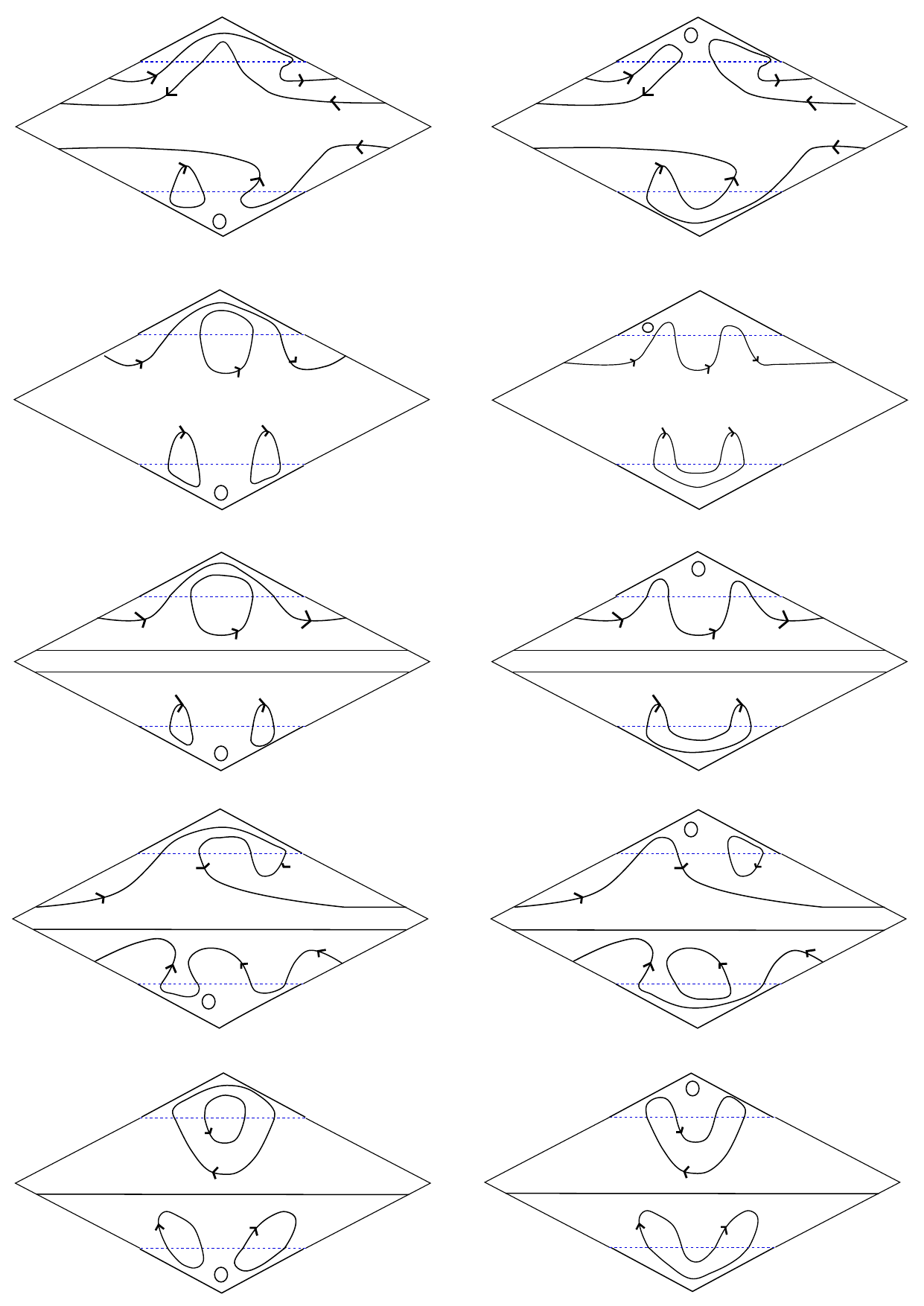}} 
\put(-50,402){$a)$}
\put(-48,438){$a$}
\put(-55,463){$c$}
\put(-30,476){$b$}
\put(-24,453){$c'$}

\put(141,402){$b)$}
\put(142,438){$a$}
\put(135,463){$c$}
\put(163,476){$b$}
\put(163,453){$c'$}

\put(-50,295){$c)$}
\put(-82,368){$i$}
\put(-79,349){$g_1$}
\put(-66,327){$d$}
\put(-66,349){$g_2$}
\put(-49,368){$e$}
\put(-47,349){$g_3$}
\put(-32,327){$h$}
\put(-27,349){$g_4$}

\put(139,295){$d)$}
\put(110,368){$i$}
\put(112,349){$g_1$}
\put(128,327){$d$}
\put(126,349){$g_2$}
\put(141,368){$e$}
\put(143,349){$g_3$}
\put(156,327){$h$}
\put(158,349){$g_4$}

\put(-50,191){$e)$}
\put(139,191){$f)$}
\put(-50,88){$g)$}
\put(-64,130){\textcolor{black}{$a'$}}
\put(-60,140){\textcolor{black}{$b'$}}
\put(139,88){$h)$}
\put(127,130){\textcolor{black}{$a'$}}
\put(131,140){\textcolor{black}{$b'$}}
\put(-50,-18){$i)$}
\put(-95,20){\textcolor{black}{$a'$}}
\put(-93,40){\textcolor{black}{$b'$}}
\put(139,-18){$l)$}
\put(101,22){\textcolor{black}{$a'$}}
\put(105,50){\textcolor{black}{$b'$}}
\end{picture}
\end{center}
\caption{Charts of curves of bidegree $(5,0)$ in $\Sigma_2$ obtained patchworking the charts of the real algebraic curves $C_i$ from Proposition \ref{prop: costruzione_più_cicciona} and $D_1$.}
\label{fig: patchwork_c_i}
\end{figure}
\begin{proof}
Thanks to Remark \ref{rem: fine}, to prove the statement we just need to construct bidegree $(5,0)$ real curves in $\Sigma_2$ whose charts are the patchworking of one of the charts depicted in Fig. \ref{fig: charts_c_i}, with one of those depicted in Fig. \ref{fig: bid_(2,2)}. First of all, let us construct real algebraic curves $D_1, D_2$ of bidegree $(2,0)$ in $\Sigma_2$ intersecting a given real curve of bidegree $(1,0)$ in four fixed real points and with charts respectively as depicted in $a)$ and $b)$ of Fig. \ref{fig: bid_(2,2)}. Let $\tilde{H}$ be a bidegree $(1,0)$ real algebraic curve in $\Sigma_2$. For any two fixed distinct real points on $\tilde{H}$, there exists a bidegree $(1,0)$ real algebraic curve $H$ passing through them. Let $P_0(x,y)P_1(x,y)=0$ be a polynomial equation defining the union of $\tilde{H}$ and $H$. For any four fixed distinct real points on a connected component $\mathcal{F}$ of $\mathbb{R}\tilde{H}\setminus \mathbb{R}H$, there exist two real curves $H_1$ and $H_2$ of bidegree $(1,0)$, such that $H_1 \cup H_2$ passes through the fixed four points. Replace the left side of the equation $P_0(x,y)P_1(x,y)=0$ with $P_0(x,y)P_1(x,y)+ \varepsilon f_1(x,y)f_2(x,y)$, where $f_i(x,y)=0$ is an equation for $H_i$ and $\varepsilon$ is a sufficient small real number. Up to a choice of the sign of $\varepsilon$, one constructs a small perturbation $D_1$ of $\tilde{H} \cup H$, where $D_1$ is a bidegree $(2,0)$ non-singular real curve such that $\bigcup_{i=1}^2H_i \cap \tilde{H}= D_1 \cap \tilde{H} $ and whose chart is as depicted in $a)$ of Fig. \ref{fig: bid_(2,2)}, where $\tilde{H}$, up to a change of coordinates, is $\{y=0\}$ (in dashed). Analogously, it is easy to construct a bidegree $(2,0)$ real curve $D_2$ whose chart, for any four fixed real points on the coordinates axis $\{y=0\}$, is as depicted in $b)$ of Fig. \ref{fig: bid_(2,2)} and intersect $\{y=0\}$ in the four fixed points. Thanks to Viro's patchworking method, we realize every real scheme listed in $1.-4.$ gluing the polynomial and the chart of a real algebraic curve $C_i$, $i=1,2,3,4,5$, constructed in Proposition $4.5$ and at the end of Section $4.1$, with those of a real algebraic curve $D_j$, $j=1,2$. In the case we patchwork real curves of type I, we get type I bidegree $(5,0)$ real algebraic curves on $\Sigma_2$ only if, fixed a complex orientation on the $C_i$'s, up to a choice of a complex orientation on $D_1$, we can patchwork such curves in a compatible way with their complex orientations. For example, let us consider the curves $C_i$ of type I with charts as depicted in Fig. \ref{fig: charts_c_i} (the fixed complex orientation is represented by the arrows). 
Now up to a choice of a complex orientation of the real curve $D_1$, one can glue the charts of the $C_i$'s with those of $D_1$ in a compatible way with respect to the arrows and obtain charts of bidegree $(5,0)$ real curves of type I in $\Sigma_2$ as depicted in Fig. \ref{fig: patchwork_c_i}.
\end{proof}
\begin{prop}
\label{prop: big_first_gluing_2}
The real schemes $1 \quad \sqcup \quad \langle 4 \rangle \quad \sqcup \quad \langle 10 \rangle $, $ \langle 4 \rangle \quad \sqcup \quad  \langle 10 \rangle$, $ \langle 4 \rangle \quad \sqcup \quad \langle 9 \rangle^{\circ}$, $13^{\circ},$ $1 \quad \sqcup \quad \langle 1 \rangle \quad \sqcup \quad \langle 9 \rangle^{\circ}$ and $1 \quad \sqcup \quad \langle 3 \rangle \quad \sqcup \quad \langle 7 \rangle^{\circ}$ are realizable by non-singular real algebraic curves of bidegree $(5,5)$ on the quadric ellipsoid.
\end{prop}
\begin{proof}
Thanks to Remark \ref{rem: fine}, in order to prove the statement we just need to construct bidegree $(5,0)$ real curves of type I (resp. of type II) in $\Sigma_2$ whose charts are as depicted in $a)$ of Fig. \ref{fig: gluing_shustin_orevkov} with
\begin{enumerate}[label=(\arabic*)]
\item $(\alpha, \beta, \gamma)=(5,0,2)$ and $(t,s)=(2,1)$;
\item $(\alpha, \beta, \gamma)=(8,1,0)$ and $(t,s)=(0,3)$;
\item $(\alpha, \beta, \gamma)=(4,1,0)$ and $(t,s)=(3,0)$;
\item $(\alpha, \beta, \gamma)=(4,1,0)$ and $(t,s)=(1,2)$;
\end{enumerate}
and in $b)$ of Fig. \ref{fig: gluing_shustin_orevkov} with $(\alpha, \beta, \gamma)=(5,0,2)$ (resp. in $a)$ of Fig. \ref{fig: gluing_shustin_orevkov} with $(\alpha, \beta, \gamma)=(7,0,1)$ and $(t,s)=(1,2)$), where $\alpha, \beta, \gamma$ and $t,s$ denote numbers of ovals.\\
The strategy is to construct real algebraic curves $C$ of bidegree $(4,0)$ on $\Sigma_2$ (resp. $C'$ of bidegree $(2,4)$ in $\Sigma_1$ intersecting $\mathbb{R}E_1$ in four fixed real points) with chart as depicted in $a)$ of Fig. \ref{fig: charts_shustin_orevkov_bigonal} (resp. $b)$ and $c)$ of Fig. \ref{fig: charts_shustin_orevkov_bigonal}). Thanks to \cite{OreShu16}, there exist real algebraic curves $C$ of bidegree $(4,0)$ and type I, resp. type II, in $\Sigma_2$ whose charts, up to a coordinates change, and arrangements with respect to the coordinates axis $\{x=0\}$ are as depicted in $a)$ of Fig. \ref{fig: charts_shustin_orevkov_bigonal} with $(\alpha,\beta,\gamma)=(5,0,2), (8,1,0)$ and $(4,1,0)$, resp. $(\alpha, \beta, \gamma)=(7,0,1)$. \\ 
\begin{figure}[t!]
\begin{center}
\begin{picture}(100,85)
\put(-175,-20){\includegraphics[width=1.2\textwidth]{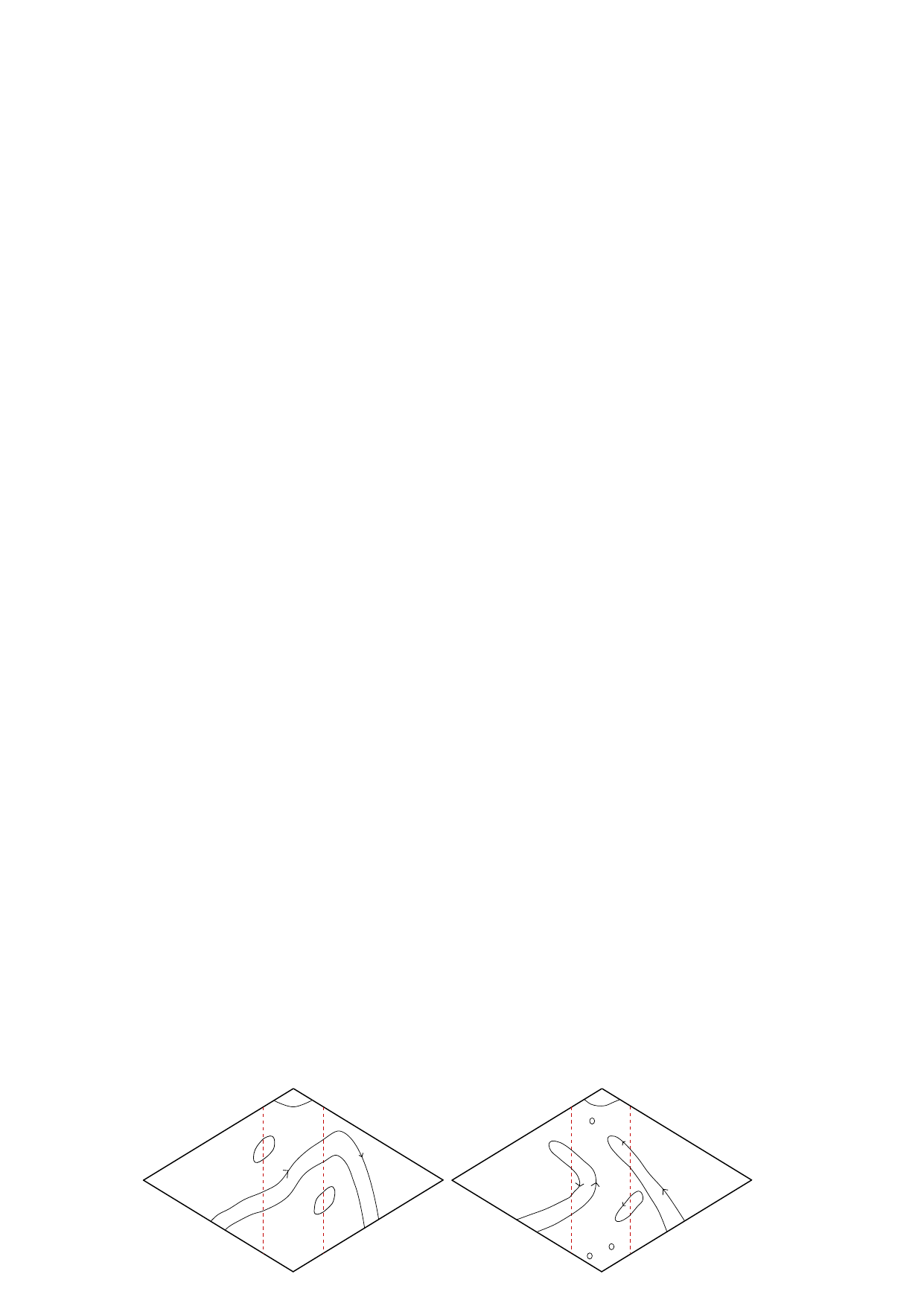}} 
\put(-37,-17){$a)$}
\put (-60,42){$\alpha$}
\put (-41,53){$t$}
\put (-41,23){$s$}
\put (-66,20){$\beta$}
\put (-16,20){$\gamma$}
\put(112,-17){$b)$}
\put (85,42){$\alpha$}
\put (88,20){$\beta$}
\put (132,18){$\gamma$}
\end{picture}
\end{center}
\caption{Charts of real algebraic curves of bidegree $(5,0)$ on $\Sigma_2$.}
\label{fig: gluing_shustin_orevkov}
\end{figure}
\begin{figure}[t!]
\begin{center}
\begin{picture}(100,107)
\put(-175,-17){\includegraphics[width=1.2\textwidth]{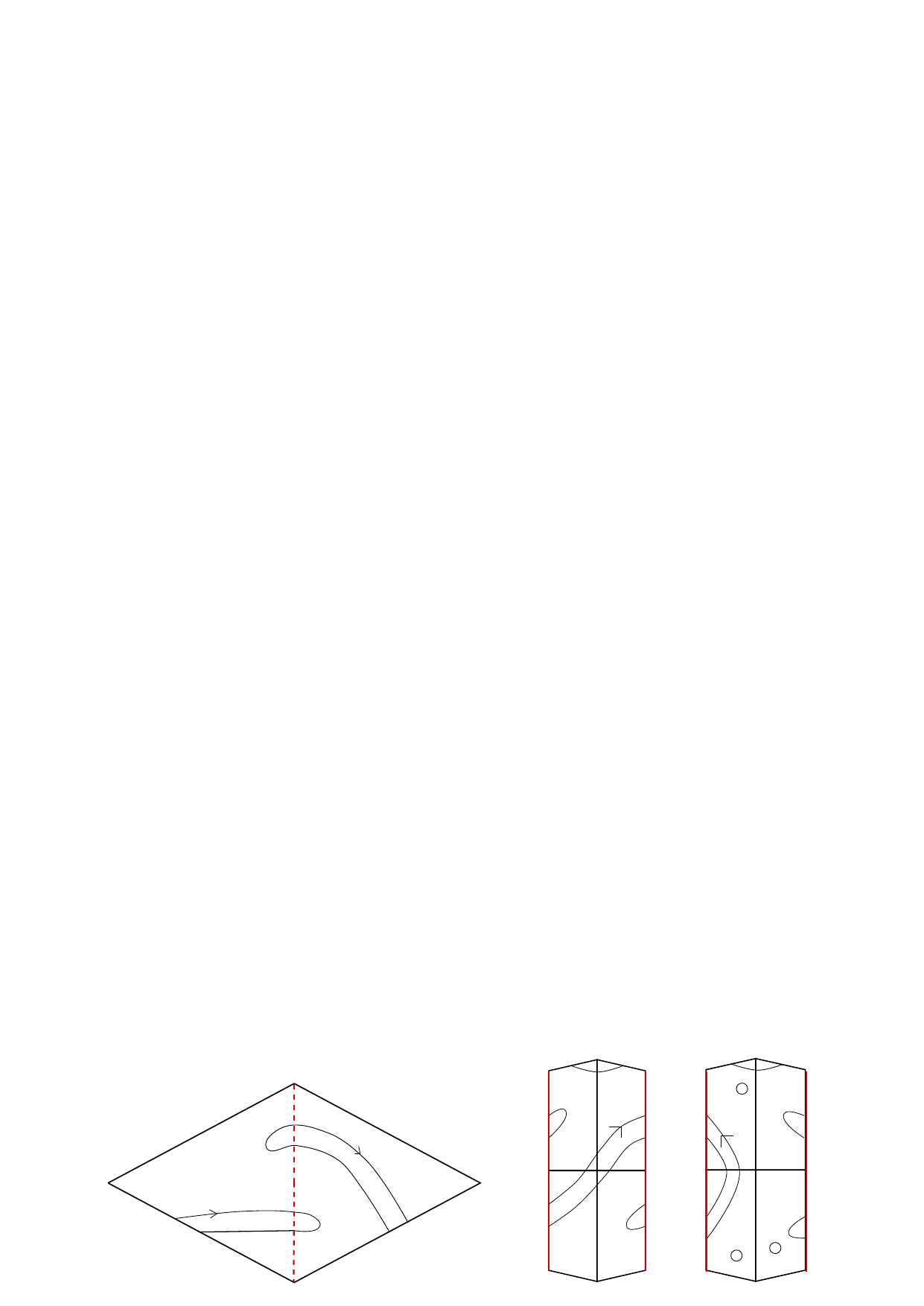}} 
\put(-38,-20){$a)$}
\put (-80,50){$\alpha$}
\put (-70,20){$\beta$}
\put (-10,23){$\gamma$}
\put(108,-20){$b)$}
\put (100,105) {$(0,5)$}
\put (129,99) {$(2,4)$}
\put (100,53){$t$}
\put (100,23){$s$}
\put(186,-20){$c)$}
\put (178,105) {$(0,5)$}
\put (207,99) {$(2,4)$}
\end{picture}
\end{center}
\caption{$a)$ Charts of real algebraic curves $C$ of bidegree $(4,0)$ on $\Sigma_2$. $b)$ - $c)$ Charts of real algebraic curves $C'$ of bidegree $(2,4)$ on $\Sigma_1$.}
\label{fig: charts_shustin_orevkov_bigonal}
\end{figure}
\begin{figure}[h!]
\begin{center}
\begin{picture}(100,60)
\put(-97,-10){\includegraphics[width=1\textwidth]{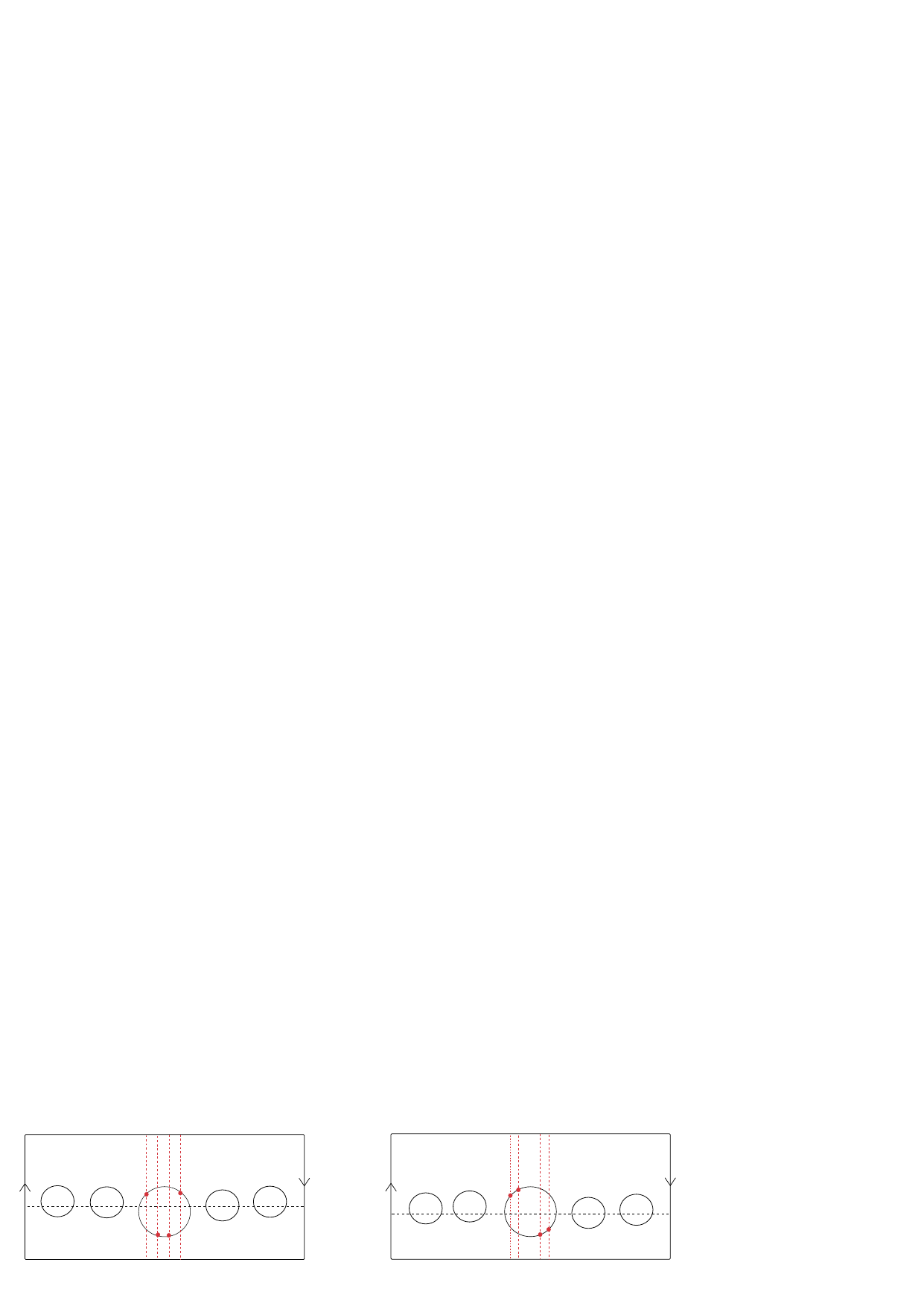}} 
\put(-35,-10){$a)$}
\put (-47,35){\textcolor{red}{$p_1$}}
\put (-23,37){\textcolor{red}{$p_4$}}
\put (-42,12){\textcolor{red}{$p_2$}}
\put (-28,12){\textcolor{red}{$p_3$}}
\put(108,-10){$b)$}
\put (99,36){\textcolor{red}{$p_1$}}
\put (125,16){\textcolor{red}{$p_4$}}
\put (109,40){\textcolor{red}{$p_2$}}
\put (116,12){\textcolor{red}{$p_3$}}
\end{picture}
\end{center}
\caption{$\mathcal{L}$-schemes of real algebraic maximal curves of bidegree $(2,0)$ on $\mathbb{R}\Sigma_5$.}
\label{fig: bigonal_curves}
\end{figure}
\begin{figure}[t!]
\begin{center}
\begin{picture}(100,60)
\put(-137,-20){\includegraphics[width=1\textwidth]{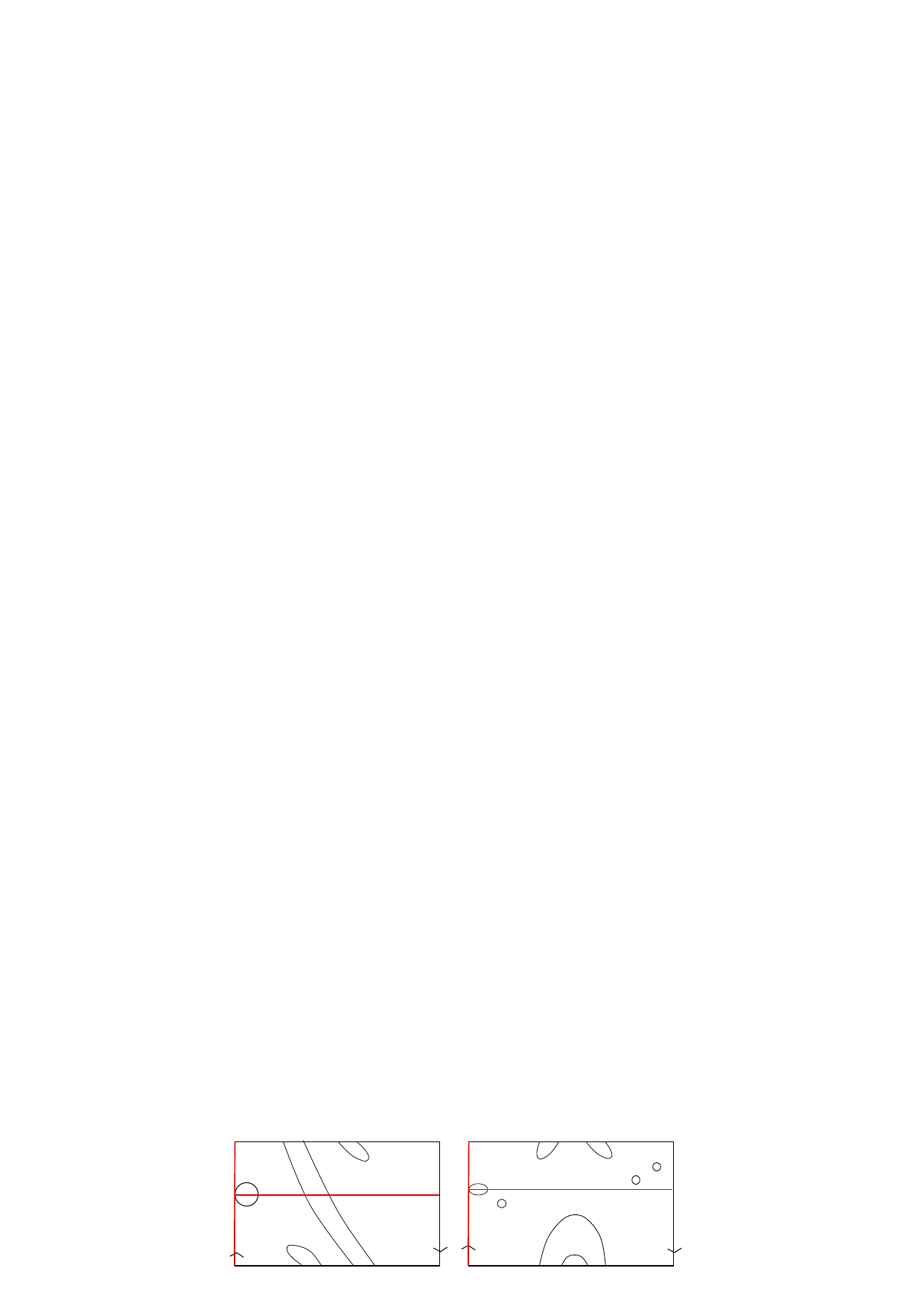}} 
\put(-3,-17){$a)$}
\put(-30,10){$t$}
\put(25,10){$s$}
\put(91,-17){$b)$}
\end{picture}
\end{center}
\caption{$\mathcal{L}$-schemes of real algebraic curves of bidegree $(2,4)$ on $\Sigma_1$.}
\label{fig: shustin_orevkov}
\end{figure}
Let us construct real curves $C'$ in $\Sigma_1$, with the properties described above. Let $\tilde{H}$ be a bidegree $(1,0)$ real curve in $\Sigma_5$ and let us fix ten real points on $\tilde{H}$. As in the proof of Proposition \ref{prop: big_first_gluing}, via small perturbations method it is possible to construct a bidegree $(2,0)$ real curve $\tilde{C}$ intersecting $\tilde{H}$ in the ten fixed points and such that the arrangement of $(\mathbb{R}\Sigma_1, \mathbb{R}\tilde{H}, \mathbb{R}\tilde{C})$ is as depicted in $a)$ of Fig. \ref{fig: bigonal_curves}, resp. $b)$ of Fig. \ref{fig: bigonal_curves}. For any fixed connected component $\mathcal{O}$ of $\mathbb{R}\tilde{C}$ we can pick four real points, $p_1,p_2,p_3,p_4$ on it as depicted in $a)$ of Fig. \ref{fig: bigonal_curves}, resp. $b)$ of Fig. \ref{fig: bigonal_curves}. Then, consider the birational transformation $\beta^{-1}_{p_{1}}\beta^{-1}_{p_{2}}\beta^{-1}_{p_{3}}\beta^{-1}_{p_{4}}: (\Sigma_5,\tilde{C})\rightarrow (\Sigma_1, C')$, as defined in Section \ref{subsec: Hirzebruch_surf}, where we call $p_i$ also the image of $p_i$ via $\beta^{-1}_{p_j}$, $j>i$, $i=1,2,3$. Choose the coordinates axes in $\mathbb{R}\Sigma_1$ such that $\mathbb{R}C'$ has an arrangement as depicted in $b)$ of Fig. \ref{fig: shustin_orevkov}, resp. $a)$ of Fig. \ref{fig: shustin_orevkov} where $t+s=3$. The charts of $C'$ are as depicted in $c)$ of Fig. \ref{fig: charts_shustin_orevkov_bigonal}, resp. $b)$ of Fig. \ref{fig: charts_shustin_orevkov_bigonal}.\\
Applying Viro's patchworking method, gluing the polynomials and the charts of the curves $C$ with those of the curves $C'$ (the patchworking of their charts is depicted in Fig. \ref{fig: gluing_shustin_orevkov}), one constructs the wanted bidegree $(5,0)$ real curves in $\Sigma_2$.
\end{proof}
\begin{prop}
\label{prop: big_first_gluing_3}
The real schemes $1 \quad \sqcup \quad \langle 7 \rangle \quad \sqcup \quad \langle 7 \rangle$, $\langle 7 \rangle \quad \sqcup \quad \langle 7 \rangle$, $\langle 1 \rangle \quad \sqcup \quad \langle 4 \rangle^{\circ}$, $1$ and $0$ are realizable by non-singular real algebraic curves of bidegree $(5,5)$ on the quadric ellipsoid $Q$. 
\end{prop}
\begin{figure}[h!]
\begin{center}
\begin{picture}(100,190)
\put(-137,-10){\includegraphics[width=1\textwidth]{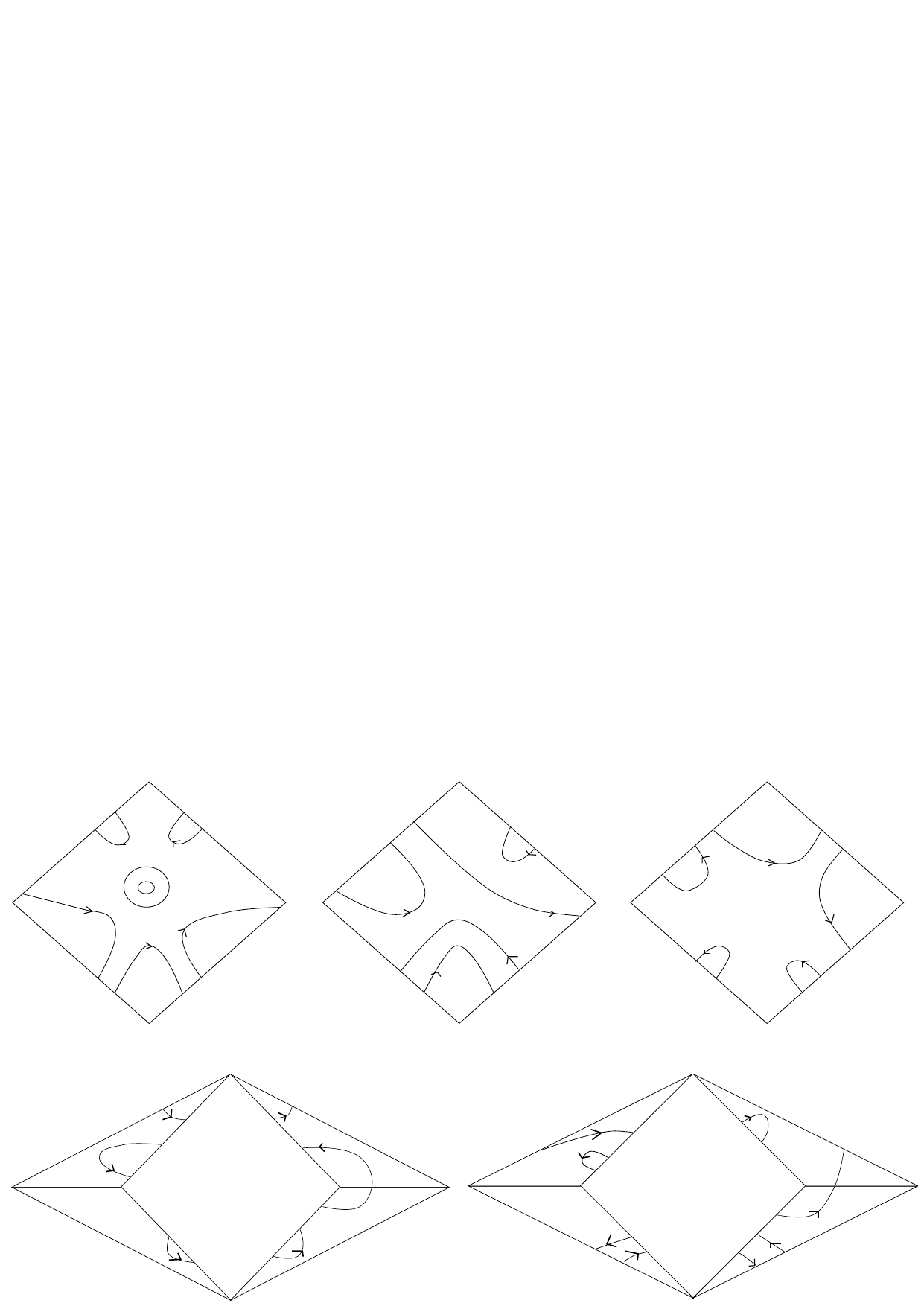}} 
\put(-81,94){$a)R_1$}
\put(-2,94){$b)R_2$ with $(p,q)=(0,6)$}
\put(6,81){$R_3$ with $(p,q)=(6,0)$}
\put(60,125){$p$}
\put (69,161){$q$}
\put(169,94){$c)R_4$}
\put (169,151){$n$}
\put(-48,-17){$d)\tilde{R_4}$}
\put (-85,20){$n$}
\put(140,-17){$e)\tilde{R_3}$}
\put(231,38){$(10,0)$}
\put(157,38){$(5,0)$}
\put(145,82){$(0,5)$}
\put (166,10){$p$}
\end{picture}
\end{center}
\caption{Charts of real algebraic curves.}
\label{fig: quintic_viro_usate}
\end{figure}
\begin{proof}
First of all, in order to realize the real scheme $0$ (resp. $1$), we just perturb the union of five real hyperplane sections in $\mathbb{C}P^3$ not intersecting $\mathbb{R}X$ (resp. whose just one intersects $\mathbb{R}X$) to a smooth surface of degree $5$. For the realization of the real schemes $\langle1\rangle  \quad \sqcup \quad  \langle 4 \rangle^{\circ}$ and $1 \quad \sqcup \quad  \langle 7 \rangle \quad \sqcup \quad  \langle 7 \rangle$ we give some intermediate constructions, then we apply Viro's patchworking method. The strategy is to construct bidegree $(5,0)$ real curves in $\Sigma_2$ whose charts are given by the patchworking of the charts depicted in $a)$ of Fig. \ref{fig: quintic_viro_usate} (resp. $b)$ of Fig. \ref{fig: quintic_viro_usate} with $(p,q)=(0,6)$) with the charts depicted in $d)$ of Fig. \ref{fig: quintic_viro_usate} with $n=4$ (resp. $e)$ of Fig. \ref{fig: quintic_viro_usate} with $p=6$) , where $n$, $p$ and $q$ denote numbers of ovals. Then, Remark \ref{rem: fine} implies that there exist real algebraic curves of bidegree $(5,5)$ in the quadric ellipsoid realizing the real schemes $\langle1\rangle  \quad \sqcup \quad  \langle 4 \rangle^{\circ}$ and $1 \quad \sqcup \quad  \langle 7 \rangle \quad \sqcup \quad  \langle 7 \rangle$.\\
Let us start constructing 
\begin{itemize}
\item real algebraic plane quintics with charts as depicted in $a)$ and $b)$, with $(p,q)=(0,6)$, of Fig. \ref{fig: quintic_viro_usate} and intersecting a given real line in five fixed real points;
\item real algebraic plane quintics with charts as depicted in $b)$, with $(p,q)=(6,0)$, and $c)$, with $n=4$, of Fig. \ref{fig: quintic_viro_usate}.
\end{itemize}
Thanks to \cite[Propositions 4.3B, 4.3C]{Viro89} and \cite{Polo77}, there exist non-singular real affine quintics, with five real asymptotes pointing in five different directions, arranged in the real plane as depicted in Fig. \ref{fig: quintic_viro_piano_affine}, where $(p,q)=(0,6)$, $(6,0)$ and $n=4$. Moreover, due to \cite{Shus83}, for any fixed five directions for the asymptotes, there exist real affine quintics with arrangements in the real plane as depicted in Fig. \ref{fig: quintic_viro_piano_affine}. 
\begin{figure}[h!]
\begin{center}
\begin{picture}(100,80)
\put(-137,-10){\includegraphics[width=1\textwidth]{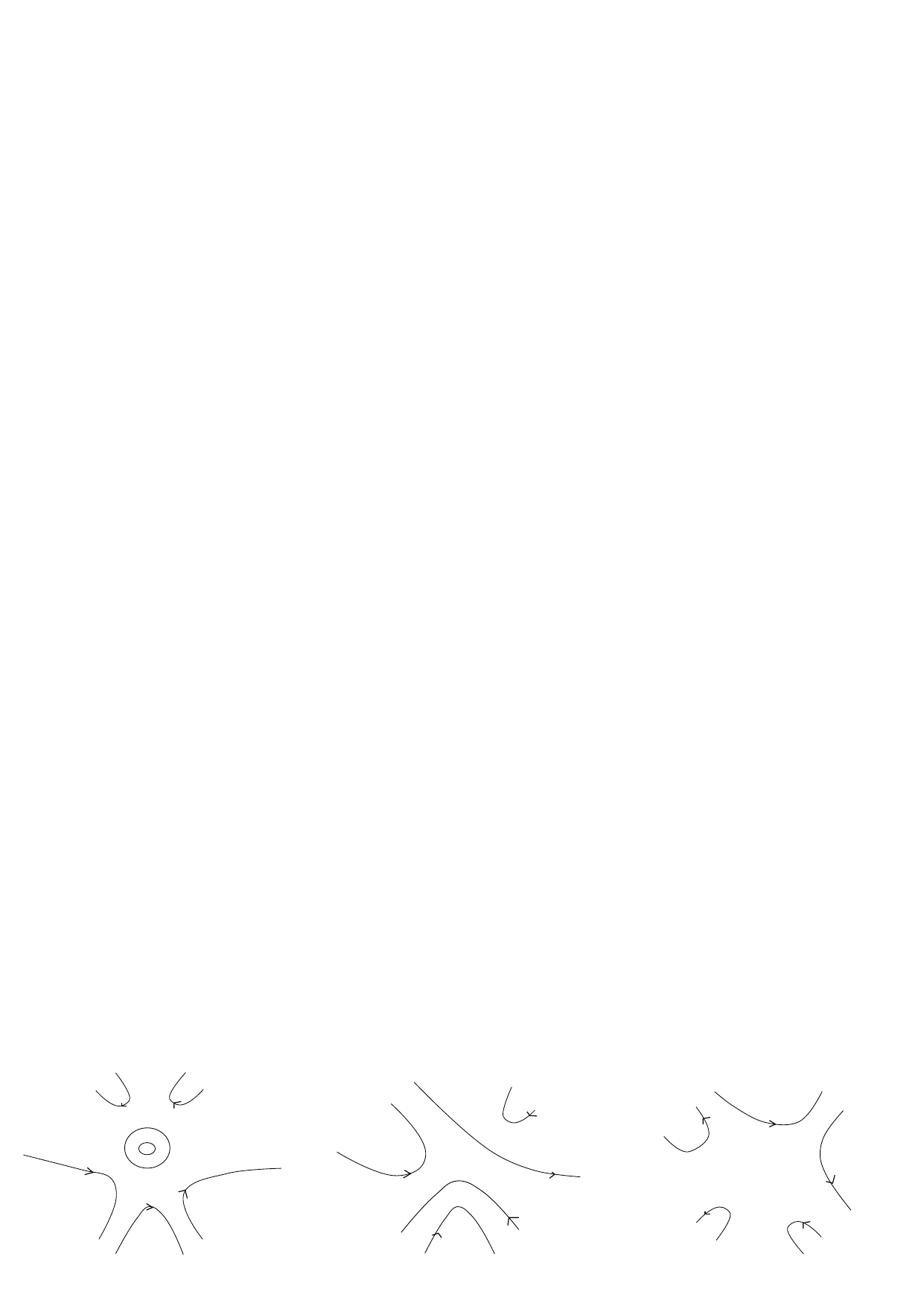}} 
\put(-81,-6){$a)$}
\put(46,-6){$b)$}
\put(60,21){$p$}
\put (69,55){$q$}
\put(169,-6){$c)$}
\put (169,41){$n$}
\end{picture}
\end{center}
\caption{Non-singular real affine quintics.}
\label{fig: quintic_viro_piano_affine}
\end{figure}
It follows that, fixed a real line $L$ in the complex projective plane and fixed five distinct real points on $L$, there exist non-singular real plane quintics $R_1$ and $R_2$ passing through the five given points such that their charts 
are respectively as depicted in $a)$ and $b)$ of Fig. \ref{fig: quintic_viro_usate}, where $(p,q)=(0,6)$. Furthermore, there exist real plane quintics $R_3$ and $R_4$ with charts respectively as depicted in $b)$ and $c)$ of Fig. \ref{fig: quintic_viro_usate}, with $(p,q)=(6,0)$ and $n=4$.\\ 
Now, let us construct real algebraic curves with charts as depicted in $d)$ and $e)$ of Fig. \ref{fig: quintic_viro_usate}, respectively with $n=4$ and  $p=6$. 
Let $$P_h(x,y,z)=\sum\limits_{i+j\leq 5}a_{i,j}x^iy^jz^{5-i-j}$$ be a polynomial of a quintic $R_h$, with $h=3$, $4$. Applying to the polynomials $P_h$ the transformation $T:P_h(x,y,z) \mapsto P_h(xz,yz,x^2)$ 
we construct real algebraic plane curves $\tilde{R}_h$ whose polynomials are $$\tilde{P}_h(x,y,z)=\sum \limits_{i+j \leq 5}a_{i,j}x^{10-i-2j}y^jz^{i+j}$$ and whose charts are as depicted in $d)$ and $e)$ of Fig. \ref{fig: quintic_viro_usate} respectively with $n=4$ and $p=6$. 
In order to realize the real scheme $\langle 1 \rangle \quad \sqcup \quad \langle 4 \rangle^{\circ}$, resp. $1 \quad \sqcup \quad \langle 7 \rangle \quad \sqcup \quad \langle 7 \rangle$, we apply Viro's patchworking method gluing the polynomials and the charts of a real plane quintic $R_1$, resp. $R_2$ with $(p,q)=(0,6)$, with those of $\tilde{R}_4$  with $n=4$, resp. $\tilde{R}_3$ with $p=6$.\\
The construction of a real algebraic curve of bidegree $(5,5)$ on the quadric ellipsoid realizing the real scheme $\langle 7 \rangle \quad \sqcup \quad \langle 7 \rangle$ mimics the construction realizing $1 \quad \sqcup \quad \langle 7 \rangle \quad \sqcup \quad \langle 7 \rangle$, but it requires a variant of the patchworking theorem due to Shustin (\cite[Theorem 5]{Shus05}). The interested reader can find the results of the following paragraph in all their generalities in \cite{Shus05}. Here, we only present an application of \cite[Theorem. 5]{Shus05} in a particular case.\\
Let $\Delta$ be a convex polygon in $\mathbb{R}_+^2$ and let $Tor({\Delta})$ be its associated toric variety. Let $S: \Delta=\bigcup_{i=1}^k\Delta_i$ be a convex subdivision of $\Delta$. Let 
$$f_i= \sum \limits_{(j,h)\in \Delta_i \cap \mathbb{Z}^2}a_{j,h}x^jy^h$$ 
be real polynomials, with $a_{j,h}\in \mathbb{R}$ and such that $C_i= \{f_i=0\}\subset Tor(\Delta_i)$ are non-singular real algebraic curves. Suppose that there exist some faces $\Gamma_{st} \subset \Delta_s \cap \Delta_t$ such that $\Gamma_{st} \not\subset \partial \Delta$ and $z_{st} \in Tor(\Gamma_{st})\cap C_j$ is a real tangency point of $C_j$ with $Tor(\Gamma_{st})$, for $j=s,t$, and locally as depicted in Fig. \ref{fig: tg_point_cha2} on the right. Furthermore, suppose that, out of the tangency points $z_{ts}$, each curve $C_i$ crosses $Tor(\Gamma')$ transversely for any face $\Gamma' \subset \Delta_i$, with $i=1,..,k$.  
Gluing the charts of the $C_i$'s, one does not obtain a chart of a polynomial. Let us consider the following topological construction: replace a neighborhood of the tangency points and of their symmetric points in $\mathbb{R}Tor(\Delta)$ (Fig. \ref{fig: tg_point_cha2}) with a \textit{deformation pattern}, i.e. two disks as depicted in $a)$ or $b)$ of Fig. \ref{fig: deformat_pattern_cha2}. Then, Shustin (\cite{Shus05}) proved that such topological construction is realizable algebraically and it produces a chart of a non-singular real polynomial in $Tor(\Delta)$.
\begin{figure}[!h]
\begin{picture}(100,65)
\put(33,0){\includegraphics[width=0.80\textwidth]{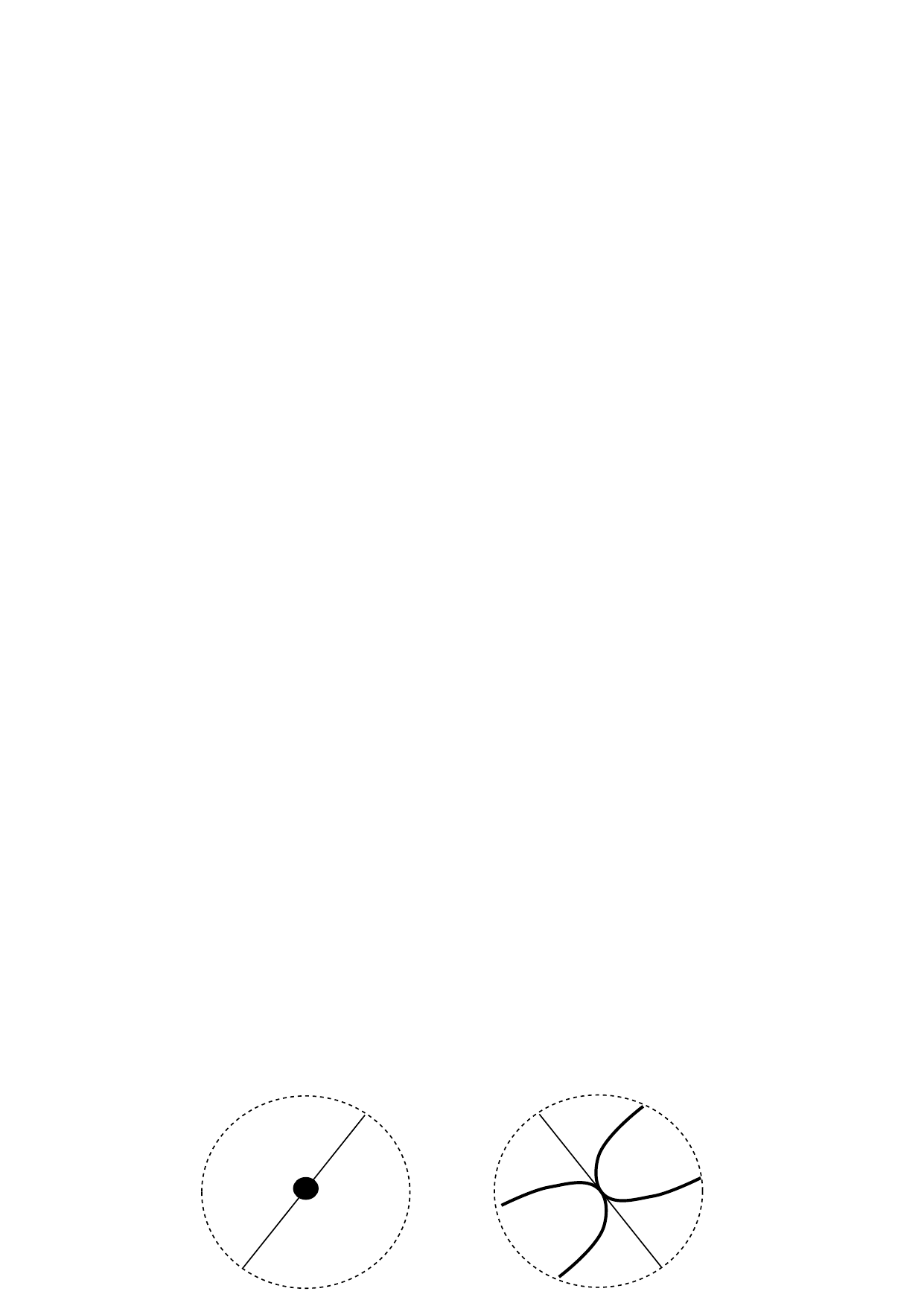}} 
\end{picture}
\caption{ }
\label{fig: tg_point_cha2}
\end{figure}
\begin{figure}[!h]
\begin{picture}(100,55)
\put(77,-12){$a)$}
\put(290,-12){$b)$}
\put(0,0){\includegraphics[width=1.00\textwidth]{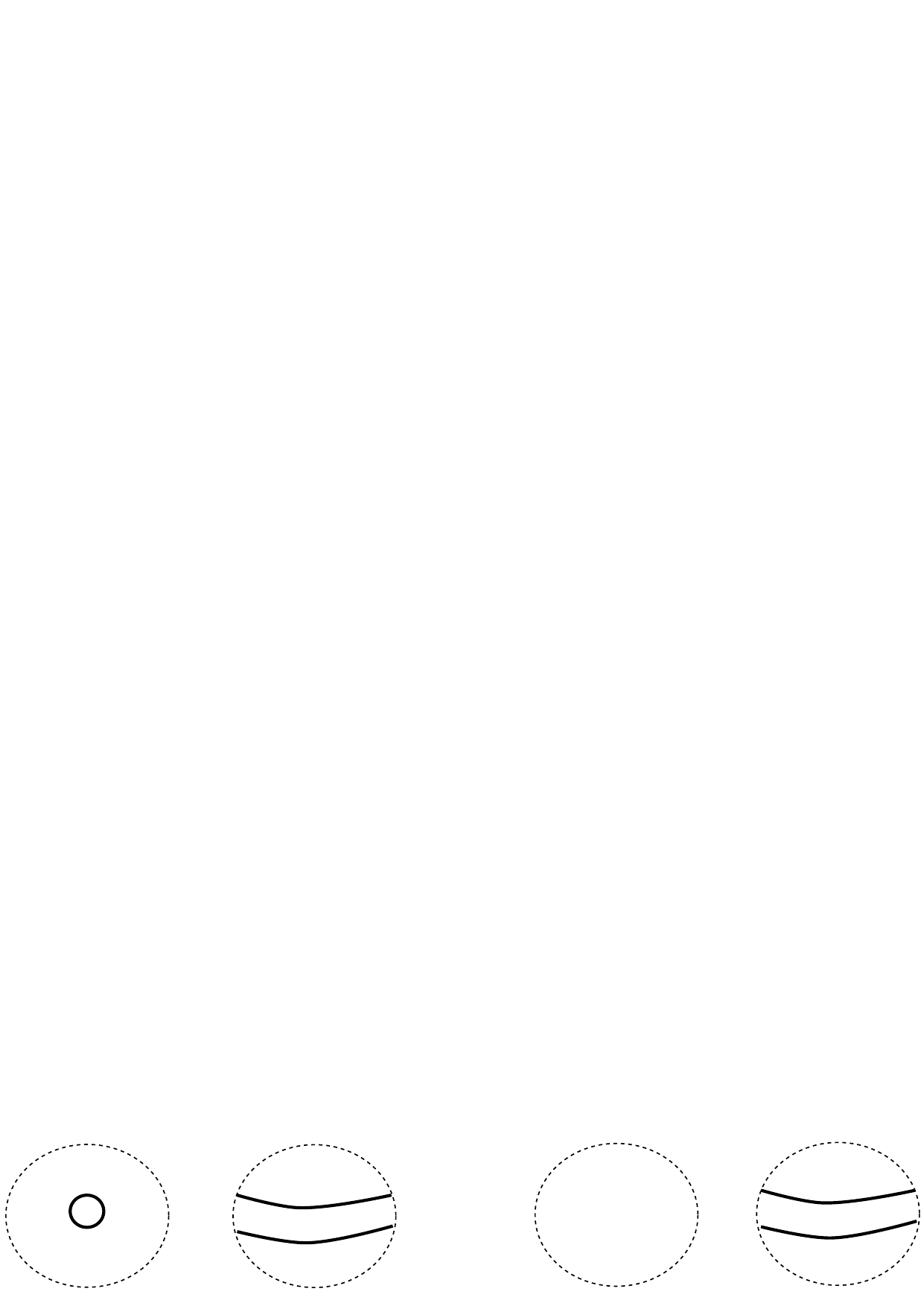}} 
\end{picture}
\caption{ }
\label{fig: deformat_pattern_cha2}
\end{figure}
We want to construct a bidegree $(5,0)$ real algebraic curve in $\Sigma_2$ with chart as depicted in $b)$ of Fig. \ref{fig: last}. \\
First of all, let us construct two non-singular plane quintics $C_1$, $C_2$ such that
\begin{itemize}
\item $C_1$ passes through three fixed real points on a given real line $L$, it is tangent to $L$ at another fixed real point and it has chart as depicted in $a)$ of Fig. \ref{fig: quintic_viro_usate_tg}, with $(p,q)=(0,6)$; 
\item $C_2$ has chart as depicted in $a)$ of Fig. \ref{fig: quintic_viro_usate_tg}, with $(p,q)=(6,0)$.
\end{itemize}
\begin{figure}[h!]
\begin{center}
\begin{picture}(100,210)
\put(-137,-19){\includegraphics[width=1\textwidth]{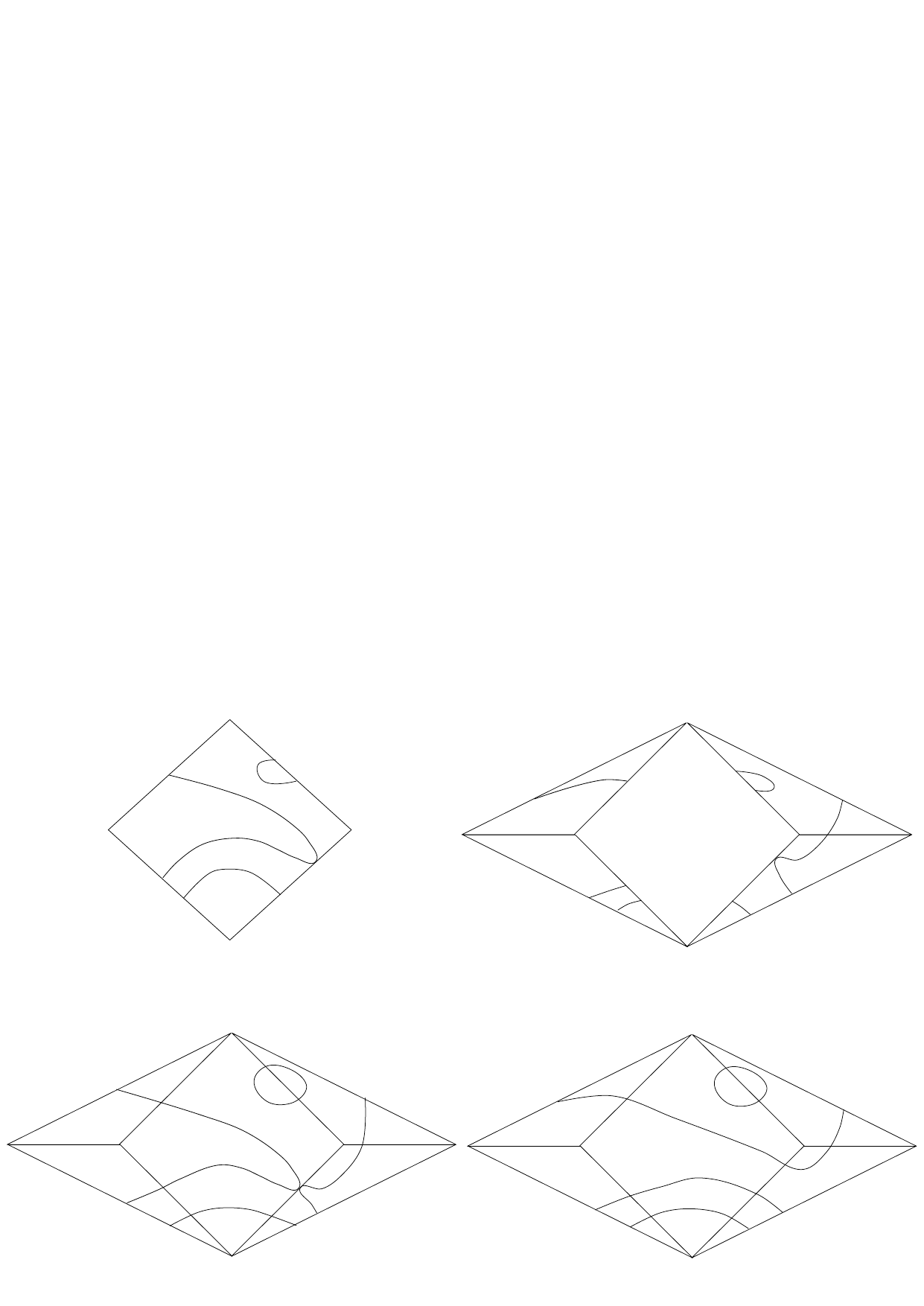}} 
\put(-128,94){$a)$ Chart of a real algebraic plane quintic.}
\put (-29,183){$q$}
\put (-22,153){$p$}
\put(64,94){$b)$ Chart of a real algebraic plane curve.}
\put (167,146){$p$}
\put(231,168){$(10,0)$}
\put(157,168){$(5,0)$}
\put(144,214){$(0,5)$}
\put(-48,-9){$c)$}
\put (-18,45){$q$}
\put (-19,19){$p$}
\put(64,-9){$d)$ Chart of a non-singular real algebraic}
\put(64,-20){curve of bidegree $(5,0)$ on $\Sigma_2$.}
\put (163,53){$q$}
\put (164,19){$p$}
\end{picture}
\end{center}
\caption{ }
\label{fig: quintic_viro_usate_tg}
\end{figure}
\begin{figure}[h!]
\begin{center}
\begin{picture}(100,120)
\put(-127,-5){\includegraphics[width=1\textwidth]{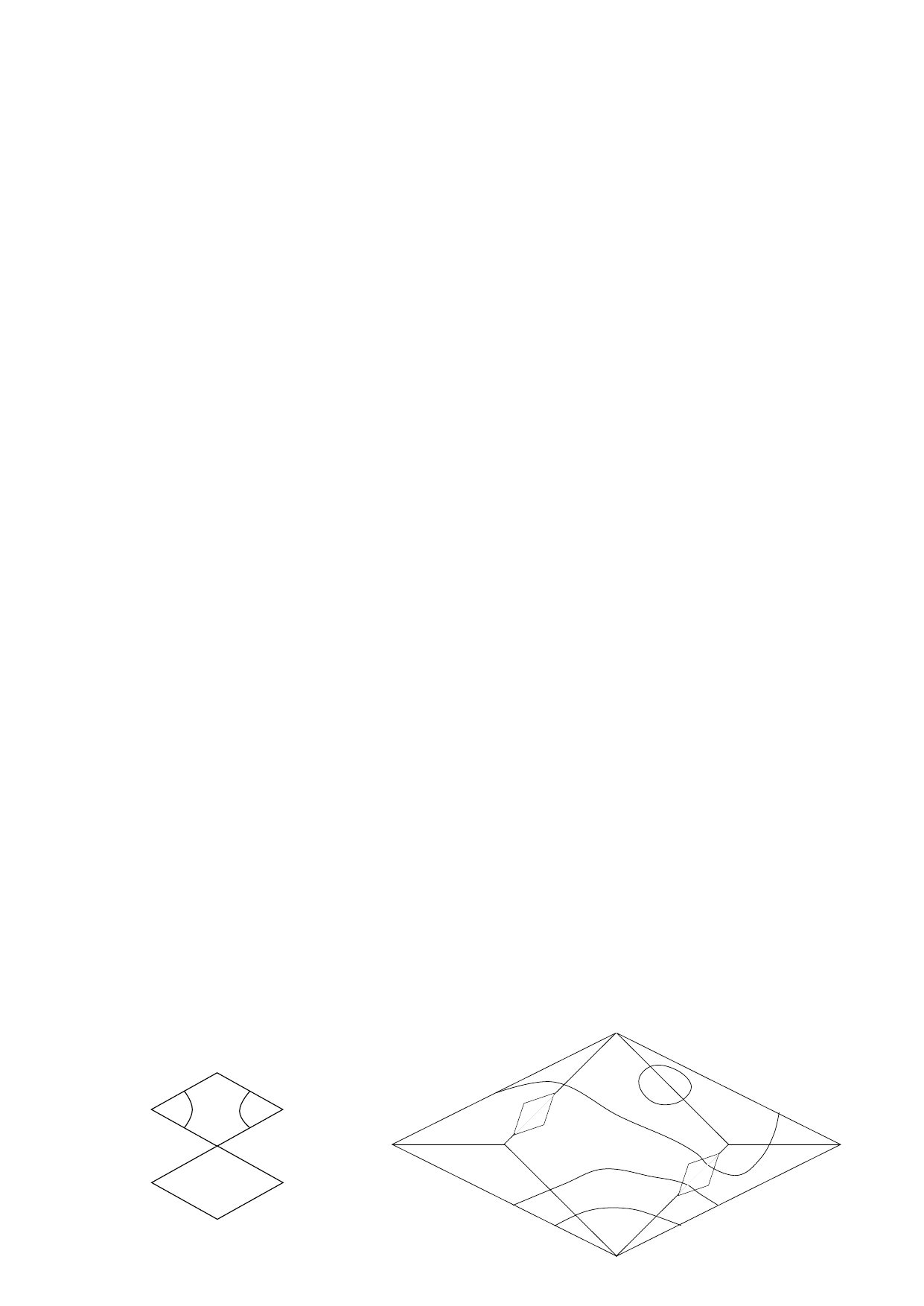}} 
\put(-100,-9){$a)$ Deformation pattern.}
\put(-50,93){$(0,1)$}
\put(-10,70){$(2,0)$}
\put(-30,57){$(0,-1)$}
\put(54,-4){$b)$ Chart of a non-singular real algebraic}
\put(54,-15){curve of bidegree $(5,0)$ on $\Sigma_2$.}
\put (148,64){$q$}
\put (143,32){$p$}
\end{picture}
\end{center}
\caption{ }
\label{fig: last}
\end{figure}
In order to construct $C_1$, let us fix four distinct real points $p_1,p_2,p_3,p_4$ on a given real line $L$. In some local chart one has $L=\{(x,y) \in \mathbb{R}^2:y=0\}$ and $p_i=(x_i,0)$, with $i=1,2,3,4$. 
Fixed any small real number $\varepsilon \not=0$, there exists a non-singular real plane quintic $C_0$ passing through $p_1$, $p_2$, $p_3$, $ (x_4+\varepsilon,0)$, $(x_4-\varepsilon,0)$ and whose chart is as depicted in $b)$ of Fig. \ref{fig: quintic_viro_usate}, where $(p,q)=(0,6)$. The quintic $C_0$ is locally given as
$$\{(x,y) \in \mathbb{R}^2:\sum\limits_{i+j \leq 5, j\not = 0}a_{i,j}x^iy^j +(x-x_1)(x-x_2)(x-x_3)(x-x_4+\varepsilon)(x-x_4-\varepsilon)=0\}.$$
Let us consider the real one-parameter family $\mathcal{H}_t$, with $t \in [0,1]$, of non-singular real plane quintics $$C_t:=\{\sum\limits_{i+j \leq 5, j\not = 0}a_{i,j}x^iy^j +(x-x_1)(x-x_2)(x-x_3)(x-x_4+(1-t)\varepsilon)(x-x_4-(1-t)\varepsilon)=0\}.$$ The real quintic $C_1$ of the family is the wanted real curve: it passes through $p_1,p_2,p_3$, it is tangent to $L$ at $p_4$ and it has chart as depicted in $a)$ of Fig. \ref{fig: quintic_viro_usate_tg}, with $(p,q)=(0,6)$.\\  
Since there exists a real plane quintic with chart as depicted in $b)$ of Fig. \ref{fig: quintic_viro_usate} with $(p,q)=(6,0)$, it is easy to see that there exists a real algebraic plane quintic $C_2$ whose chart is as depicted in $a)$ of Fig. \ref{fig: quintic_viro_usate_tg}, with $(p,q)=(6,0)$. Applying the transformation $T$ to the real quintic $C_2$, one constructs a non-singular real plane curve $\tilde{C}_2$ with chart as depicted in $b)$ of Fig. \ref{fig: quintic_viro_usate_tg}, where $p=6$.\\
Unlike in the construction of a bidegree $(5,5)$ real curve in the quadric ellipsoid realizing the real scheme $1 \quad \sqcup \quad \langle 7 \rangle \quad \sqcup \quad \langle 7 \rangle$, one can not glue the charts of $C_1$ and $\tilde{C}_2$ as depicted in $c)$ of Fig. \ref{fig: quintic_viro_usate_tg} (where $p$ and $q$ are both equal to $6$), because we do not obtain a chart of a polynomial. But, the variant of the patchworking method developed by Shustin allows us to replace a neighborhood of the tangency point and its symmetric point with a \textit{deformation pattern}; see $a)$ and $b)$ of Fig. \ref{fig: last}. At the end, we obtain a chart of a real algebraic curve of bidegree $(5,0)$ in $\Sigma_2$ as depicted in $d)$ of Fig. \ref{fig: quintic_viro_usate_tg}, with $p=6$ and $q=6$.
\end{proof}
\bibliographystyle{alpha}
\bibliography{biblio.bib}

\begin{thebibliography}{{Jar}18}

\bibitem[Bru07]{Bruga07}
E.~Brugallé.
\newblock Symmetric plane curves of degree 7: pseudoholomorphic and algebraic
  classifications.
\newblock {\em Journal für die reine und angewandte Mathematik (Crelle's
  Journal)}, 612:129--171, 2007.

\bibitem[DIK08]{DegIteKha08}
A.~I. Degtyarev, I.~Itenberg, and V.~M. Kharlamov.
\newblock On deformation types of real elliptic surfaces.
\newblock {\em Amer. J. Math.}, 130(6):1561--1627, 2008.

\bibitem[DIZ14]{DegIteZvo14}
A.~I. Degtyarev, I.~Itenberg, and V.~I. Zvonilov.
\newblock Real trigonal curves and real elliptic surfaces of type {I}.
\newblock {\em J. Reine Angew. Math.}, 686:221--246, 2014.

\bibitem[DK00]{DegKha00}
A.~I. Degtyarev and V.~M. Kharlamov.
\newblock Topological properties of real algebraic varieties: {R}okhlin's way.
\newblock {\em Uspekhi Mat. Nauk}, 55(4(334)):129--212, 2000.

\bibitem[DZ99]{DegZvo99}
A.~I. Degtyarev and V.~I. Zvonilov.
\newblock Rigid isotopy classification of real algebraic curves of bidegree
  {$(3,3)$} on quadrics.
\newblock {\em Mat. Zametki}, 66(6):810--815, 1999.

\bibitem[GM77]{GuiMar77}
L.~Guillou and A.~Marin.
\newblock Une extension d'un th\'eor\`eme de {R}ohlin sur la signature.
\newblock {\em C. R. Acad. Sci. Paris S\'er. A-B}, 285(3):A95--A98, 1977.

\bibitem[GS80]{GudShu80}
D.~A. Gudkov and E.~I. Shustin.
\newblock Classification of non-singular 8th order curves on ellipsoid. {$I$}n
  "{$M$}ethods of {$Q$}ualitative {$T$}heory".
\newblock {\em Gorky university Press}, pages 104--107 (in Russian), 1980.

\bibitem[Har76]{Harn76}
A.~Harnack.
\newblock {\"{U}}ber vieltheiligkeit der ebenen algebraischen curven.
\newblock In {\em Math. Ann., 10: 189-199,}, volume 1060 of {\em Lecture Notes
  in Math.}, pages 187--200. 1876.

\bibitem[{Jar}18]{Jara18}
Andr{\'e}s {Jaramillo Puentes}.
\newblock Rigid isotopy classification of generic rational quintics in
  {$\mathbb{R}P^2$}.
\newblock {\em ArXiv e-prints}, April 2018.

\bibitem[Kle22]{Klei22}
F.~Klein.
\newblock Gesammelte mathematische abhandlungen.
\newblock {\em Berlin}, 2, 1922.

\bibitem[Mik94]{Mikh94}
G.~Mikhalkin.
\newblock Congruences for real algebraic curves on an ellipsoid.
\newblock In {\em Topology of manifolds and varieties}, volume~18 of {\em Adv.
  Soviet Math.}, pages 223--233. Amer. Math. Soc., Providence, RI, 1994.

\bibitem[Nik85]{Niku85}
V.~V. Nikulin.
\newblock Filterings of {$2$}-elementary forms and involutions of integral
  bilinear symmetric and skew-symmetric forms.
\newblock {\em Izv. Akad. Nauk SSSR Ser. Mat.}, 49(4):847--873, 895, 1985.

\bibitem[NS05a]{NikSai05}
V.~V. Nikulin and S.~Saito.
\newblock Real {$K3$} surfaces with non-symplectic involution and applications.
\newblock {\em Proc. London Math. Soc. (3)}, 90(3):591--654, 2005.

\bibitem[NS05b]{NikuSait05}
V.~V. Nikulin and S.~Saito.
\newblock Real {$K3$} surfaces with non-symplectic involution and applications.
\newblock {\em Proc. London Math. Soc. (3)}, 90(3):591--654, 2005.

\bibitem[NS07]{NikSai07}
V.~V. Nikulin and S.~Saito.
\newblock Real {$K3$} surfaces with non-symplectic involution and applications.
  {II}.
\newblock {\em Proc. Lond. Math. Soc. (3)}, 95(1):20--48, 2007.

\bibitem[Ore03]{Orev03}
S.~Y. Orevkov.
\newblock Riemann existence theorem and construction of real algebraic curves.
\newblock {\em Ann. Fac. Sci. Toulouse Math. (6)}, 12(4):517--531, 2003.

\bibitem[Ore07]{Orev207}
S.~Y. Orevkov.
\newblock Positions of an {$M$}-quintic with respect to a conic that maximally
  intersect the odd branch of the quintic.
\newblock {\em Algebra i Analiz}, 19(4):174--242, 2007.

\bibitem[OS16]{OreShu16}
S.~Y. Orevkov and E.~I. Shustin.
\newblock Real algebraic and pseudoholomorphic curves on the quadratic cone and
  smoothings of the singularity of {$X_{21}$}.
\newblock {\em Algebra i Analiz}, 28(2):138--186, 2016.

\bibitem[Pol77]{Polo77}
G.~M. Polotovskii.
\newblock A catalogue of {$M$}-splitting curves of order six.
\newblock {\em Dokl. Akad. Nauk SSSR}, 236:548--551, 1977.

\bibitem[Rok72]{Rokh72}
V.~A. Rokhlin.
\newblock Congruences modulo 16 in {H}ilbert’s sixteenth problem.
\newblock {\em Funktsional. Anal. i Prilozhen. 6, 58–64 (Russian)}, 1972.

\bibitem[Shu83]{Shus83}
E.~I. Shustin.
\newblock The {$H$}ilbert-{$R$}ohn method and bifurcations of complicated
  singular points of 8th degree curves.
\newblock {\em Uspekhi Mat. Nauk, 38, (Russian).}, pages 157--158, 1983.

\bibitem[Shu98]{Shus98}
E.~I. Shustin.
\newblock Gluing of singular and critical points.
\newblock {\em Topology}, 37(1):195--217, 1998.

\bibitem[Shu02]{Shus02}
E.~I. Shustin.
\newblock Patchworking singular algebraic curves, non-archimedean amoebas and
  enumerative geometry.
\newblock {\em ArXiv Mathematics e-prints}, 2002.

\bibitem[Shu05]{Shus05}
E.~Shustin.
\newblock A tropical approach to enumerative geometry.
\newblock {\em Algebra i Analiz}, 17(2):170--214, 2005.

\bibitem[Shu06]{Shus06}
E.~I. Shustin.
\newblock The patchworking construction in tropical enumerative geometry.
\newblock In {\em Singularities and computer algebra}, volume 324 of {\em
  London Math. Soc. Lecture Note Ser.}, pages 273--300. Cambridge Univ. Press,
  Cambridge, 2006.

\bibitem[Vir84a]{Viro84a}
O.~Y. Viro.
\newblock Gluing of plane real algebraic curves and constructions of curves of
  degrees {$6$} and {$7$}.
\newblock In {\em Topology ({L}eningrad, 1982)}, volume 1060 of {\em Lecture
  Notes in Math.}, pages 187--200. Springer, Berlin, 1984.

\bibitem[Vir84b]{Viro84b}
O.~Y. Viro.
\newblock Plane real curves of degree {$7$} and {$8$}: new restrictions.
\newblock In {\em Math. USSR-Izvestia, 23:409-422}, volume 1060 of {\em Lecture
  Notes in Math.}, pages 187--200. 1984.

\bibitem[Vir89]{Viro89}
O.~Y. Viro.
\newblock Real plane algebraic curves: constructions with controlled topology.
\newblock {\em Algebra i Analiz}, 1(5):1--73, 1989.

\bibitem[Vir06]{Viro06}
O.~Y. Viro.
\newblock Patchworking real algebraic varieties.
\newblock {\em ArXiv Mathematics e-prints}, November 2006.

\bibitem[Zvo83]{Zvon83}
V.~I. Zvonilov.
\newblock Complex orientations of real algebraic curves with singularities.
\newblock {\em Dokl. Akad. Nauk SSSR}, 268(1):22--26, 1983.

\bibitem[Zvo91]{Zvon91}
V.~I. Zvonilov.
\newblock Complex topological invariants of real algebraic curves on a
  hyperboloid and an ellipsoid.
\newblock {\em Algebra i Analiz}, 3(5):88--108, 1991.

\end{thebibliography}
Matilde Manzaroli, University of Oslo, UiO\\
Postboks 1053 Blindern, 0316 OSLO, Norway\\
E-mail adress: manzarom@math.uio.no
\end{document}